\newtheorem{thm}{Theorem}
\newtheorem{lemma}[thm]{Lemma}
\newtheorem{prop}[thm]{Proposition}
\newtheorem{rem}[thm]{Remark}
\newtheorem{df}[thm]{Definition}
\theoremstyle{definition}
\newtheorem{ex}[thm]{Example}
\newcommand{\R}{\mathbb{R}}
\newcommand{\C}{\mathbb{C}}
\newcommand{\Z}{\mathbb{Z}}
\newcommand{\T}{\mathbb{T}}
\newcommand{\tr}{\mathrm{Tr}}
\newcommand{\Cl}{\mathcal{C}\ell}
\newcommand{\inner}[1]{\left<#1\right>}
\newcommand{\id}{\mathrm{Id}}
\newcommand{\contra}{\,\lrcorner\,}
\newcommand{\D}{\slashed{D}} 
\newcommand{\dd}{\operatorname{d}}
\renewcommand{\S}{Sec.~}
\begin{document}

\title[Twisted reality and the second-order condition]{\vspace*{-1cm}Twisted reality and the second-order condition}

\author[L.~D{\k a}browski, F.~D'Andrea and A.M.~Magee]{Ludwik D{\k a}browski, Francesco D'Andrea and Adam M.~Magee}

\address[L.~D{\k a}browski]{Scuola Internazionale Superiore di Studi Avanzati (SISSA), via Bonomea 265, I-34136 Trieste}
\email{dabrow@sissa.it}

\address[F.~D'Andrea]{Universit\`a di Napoli ``Federico II'' and I.N.F.N. Sezione di Napoli, Complesso MSA, Via Cintia, 80126 Napoli, Italy}
\email{francesco.dandrea@unina.it}

\address[A.~Magee]{Scuola Internazionale Superiore di Studi Avanzati (SISSA), via Bonomea 265, I-34136 Trieste}
\email{amagee@sissa.it}

\subjclass[2010]{Primary: 58B34; Secondary: 46L87.}

\keywords{Hodge-Dirac operator; twisted real structures; products of spectral triples; second-order condition.}

\begin{abstract}
An interesting feature of the finite-dimensional real spectral triple $(A,H,D,J)$ of the Standard Model is that it satisfies a ``second-order'' condition: conjugation by $J$ maps the Clifford algebra $\Cl_D(A)$ into its commutant, which in fact is isomorphic to the Clifford algebra itself ($H$ is a self-Morita equivalence $\Cl_D(A)$-bimodule).
This resembles a property of the canonical spectral triple of a closed oriented Riemannian manifold: there is a dense subspace of $H$ which is a self-Morita equivalence $\Cl_D(A)$-bimodule.
In this paper we argue that on manifolds, in order for the self-Morita equivalence to be implemented by a reality operator $J$, one has to introduce a ``twist'' and weaken one of the axioms of real spectral triples.
We then investigate how the above mentioned conditions behave under products of spectral triples.
\end{abstract}

\maketitle

\vspace*{-5mm}

\section{Introduction}
Spectral triples $(A, H, D)$ \cite{Con94} 
(see also \cite{CM08,GBV01,Lan02,vS15,Var06})
enrich the Gelfand-Naimark duality between spaces and their commutative algebras of functions by further encoding smoothness, calculus, and metric structure, and
allowing for a generalisation of such notions to noncommutative algebras.
In this framework, one can equip spectral triples with a real structure $J$ \cite{Con96}, which is essential to proclaiming $D$ a first-order differential operator.
Such so-called \emph{real} spectral triples have been successfully applied, for instance, to the Standard Model of particle physics. 
The spectral triple of the Standard Model is the product of the canonical real spectral triple $(A_M, H_M, D_M,J_M)$ of a spin manifold $M$ and a finite-dimensional noncommutative real spectral triple $(A_F, H_F, D_F,J_F)$ encoding the \mbox{internal} \mbox{degrees} of freedom of elementary particles.
An interesting feature is that $J_F$ also implements a ``second-order'' condition \cite{FB14a}: conjugation by $J_F$ maps the Clifford \mbox{algebra} $\Cl_{D_F}(A_F)$ into its commutant. 
In fact an even stronger property holds: the commutant of $\Cl_{D_F}(A_F)$
is isomorphic to $\Cl_{D_F}(A_F)$ itself, with $H_F$ a self-Morita equivalence $\Cl_{D_F}(A_F)$-bimodule (we call this the \emph{Hodge} property, \emph{cf.}~Def.~\ref{df:5}).
These features and their \mbox{consequences} for the example of the Standard Model were studied in \cite{DDS17,DS19,DD14} in the context of finite-dimensional spectral triples.

Similar such $\Cl_D(A)$-bimodules were investigated in great detail in the context of spectral triples of closed oriented Riemannian manifolds in \cite{LRV12}. These spectral triples, which we refer to as ``Hodge-de Rham'' spectral triples, are built on the space of complex exterior forms rather than spinors. The Dirac operator is built from the exterior derivative either as $d+d^*$ or $-i(d-d^*)$, and there is a natural reality operator associated to the Hodge star operator that intertwines these two Dirac operators, as observed already in \cite{FGR99} (see also \cite{GBV01,LRV12,Con13}).
Another natural antilinear involution is given by the Tomita-Takesaki operator, and although it does not give a real spectral triple \emph{sensu stricto} (\emph{cf.} Def.~\ref{df:realST}), it does implement a $\Cl_D(A)$ self-Morita equivalence (\emph{cf.}~\cite{LRV12}).

Motivated by the work \cite{FB14a} and by the properties of the spectral triple describing the internal degrees of freedom of particles in the noncommutative geometry approach to the Standard Model of particle physics, we are interested in spectral triples satisfying the second-order condition. Such a condition should in some sense characterise the difference between differential forms and Dirac spinors.

In \S\ref{sec:prelim}, we recall some background material about spectral triples and give a gentle review of the canonical spectral triple of a closed oriented Riemannian manifold, built from differential forms. We are particularly interested in reality operators and observe that of the two natural reality operators, the one coming from the Hodge star does not satisfy the second-order condition, whilst the one coming from the involution does not (anti)commute with the Dirac operator. The observation in \cite{FGR99} that the former real structure intertwines the two natural Dirac operators is re-interpreted in the framework of twisted reality \cite{BCDS16,BDS19}.

A natural question is then whether there exists an alternative antilinear involution giving both a real spectral triple \emph{sensu stricto} and satisfying the second-order condition. In \S\ref{sec:torus} we consider the simple example of the $2$-torus and prove that such an operator doesn't exist (\emph{cf.} Thm.~\ref{thm:main}). Twisted reality is the best one can aim for if one is interested in the second-order condition.

In light of the fact that the spectral triple of the Standard Model is a product of two factors, in \S\ref{sec:products} we study how the above mentioned conditions behave under products of spectral triples. In particular we will argue in \S\ref{sec:prodHodge} that, if one defines the tensor product of real structures in the correct way, then the Hodge property is preserved under products of real spectral triples.

\section{Spectral triples and Riemannian manifolds}\label{sec:prelim}

\subsection{Spectral triples} Here we collect some preliminary definitions \cite{Con94}.

\begin{df}
A \emph{unital spectral triple} is the collection of data $(A,H,D)$ comprising of
\begin{itemize}
\item a complex Hilbert space $H$,\footnote{We will always assume that $H$ is complex, even if $A$ may be real.}
\item a real or complex unital $*$-subalgebra $A$ of the algebra of bounded operators on $H$,
\item a self-adjoint operator $D$ on $H$ with compact resolvent,
\end{itemize}
such that $a\operatorname{Dom}(D)\subset\operatorname{Dom}(D)$ and $[D,a]$ extends to a bounded operator on $H$ for all $a\in A$. We call $D$ a (generalised) Dirac operator.
\end{df}

\noindent
The Clifford algebra of a spectral triple was introduced in \cite{FGR99} (see also \cite{LRV12}). Given a unital spectral triple we define
\begin{itemize}
\item $\Omega^1_D(A)$ to be the \emph{complex} vector subspace of $\mathcal{B}(H)$ spanned by $a[D,b]$, for $a,b\in A$; and

\item $\Cl_D(A)$ to be the \emph{complex} $C^*$-subalgebra of $\mathcal{B}(H)$ generated by $A$ and $\Omega^1_D(A)$.
\end{itemize}
We think of $\Omega^1_D(A)$ as the analogue of (smooth) differential $1$-forms and $\Cl_D(A)$ as the analogue of (continuous) sections of the Clifford algebra bundle on a compact Riemannian manifold.

A unital spectral triple $(A,H,D)$ is called \emph{even} if it is equipped with a bounded self-adjoint operator $\gamma$ on $H$ which commutes with $A$, anticommutes with $D$, and satisfies $\gamma^2=1$. We call $\gamma$ the \emph{grading} operator.

If $S$ is any subset of $\mathcal{B}(H)$ we will denote by
$$
S'\coloneqq \big\{\,\xi\in\mathcal{B}(H)\;\big\vert\;[\xi,\eta]=0,\;\forall\;\eta\in S\,\big\}
$$
its \emph{commutant}.
Given an antilinear isometry $J$ on $H$, for all $\xi\in\mathcal{B}(H)$ and $S\subset\mathcal{B}(H)$ we will further denote
\begin{align*}
\xi^\circ&\coloneqq J\xi^*J^{-1},\\
S^\circ&\coloneqq \big\{\,\xi^\circ\;\big\vert\;\xi\in S\,\big\}.
\end{align*}
Notice that the map $\xi\mapsto\xi^\circ$ is complex-linear and antimultiplicative:
$$
(\xi\eta)^\circ=\eta^\circ\xi^\circ,
$$
for all $\xi,\eta\in\mathcal{B}(H)$. If $S$ is a subalgebra of $\mathcal{B}(H)$, sometimes it will be useful to think of the above map as defining a \emph{right} action $\triangleleft$ of $S$ on $H$ by
\begin{equation}\label{eq:rightaction}
\psi\triangleleft\xi\coloneqq \xi^\circ\psi,
\end{equation}
for all $\xi\in S$ and $\psi\in H$.

Inspired by Tomita-Takesaki theory and by the example of the modular involution of a von Neumann algebra, given a unital spectral triple and an antilinear isometry $J$ we formulate the following set of conditions:
\begin{subequations}\label{eq:commutant}
\begin{align}
A^\circ &\subset A', \label{eq:commutanta}\\
\Omega^1_D(A)^\circ &\subset A', \label{eq:commutantb}\\ 
\Omega^1_D(A)^\circ &\subset \Omega^1_D(A)'. \label{eq:commutantc}
\end{align}
\end{subequations}

\begin{rem}
The above conditions can be recast in more familiar forms: condition \textup{(\ref{eq:commutanta})} means that $[a,JbJ^{-1}]=0$ for all $a,b\in A$, commonly known as the \emph{reality} or \emph{zeroth-order} condition; \textup{(\ref{eq:commutantb})} that
$\bigl[[D,a],JbJ^{-1}\bigr]=0$ for all $a,b\in A$, known as the \emph{first-order} condition;
and \textup{(\ref{eq:commutantc})} that
$\bigl[[D,a],J[D,b]J^{-1}\bigr]=0$ for all $a,b\in A$, called the \emph{second-order} condition in \cite{FB14a}.

\end{rem}

Following the example of the canonical spectral triple of a closed oriented Riemannian spin manifold \cite{Con96}, we state another set of conditions:
\begin{subequations}\label{eq:J}
\begin{align}
J^2&=\varepsilon 1, \label{eq:Ja}\\
JD&=\varepsilon' DJ, \label{eq:Jb}\\
J\gamma&=\varepsilon''\gamma J, \label{eq:Jc}
\end{align}
\end{subequations}
where $\varepsilon,\varepsilon',\varepsilon''=\pm 1$ are three signs which 
determine what is called the \emph{KO-dimension} of the spectral triple according to Table \ref{tab}. Note that \textup{(\ref{eq:Jc})} is only relevant in the even case, and that in (\ref{eq:Jb}) we are implicitly assuming that $J$ preserves the domain of $D$. 

\begin{table}[t]
$\begin{array}{|l|cccccccc|}
\hline
\text{KO-dim} & 0 & 1 & 2 & 3 & 4 & 5 & 6 & 7 \\
\hline
\hspace*{6mm}\varepsilon    & ++ & + & -+ & - & -- & - & +- & + \\
\hspace*{6mm}\varepsilon'   & +- & - & +- & + & +- & - & +- & + \\
\hspace*{6mm}\varepsilon''  & ++ &   & -+ &   & ++ &   & -- & \\
\hline
\end{array}$

\bigskip

\caption{KO-dimensions of a spectral triple. Note that in the even case there are two possible choices of antilinear isometry $J$ related by the grading $\gamma$ \cite{DD11}.}\label{tab}
\end{table}

\begin{df}\label{df:realST}
A unital (even) spectral triple $(A,H,D,(\gamma))$ is called \emph{real} if it is equipped with an antilinear isometry $J$ satisfying 
the reality \textup{(\ref{eq:commutanta})} and first-order \textup{(\ref{eq:commutantb})} conditions, and (\ref{eq:J}).  
The operator $J$ will be called the \emph{real structure} of the spectral triple.
\end{df}

Real spectral triples were introduced in \cite{Con96}. The presentation in terms of \mbox{1-forms} and the Clifford algebra can be found in \cite{FGR99}.

Notice that (\ref{eq:commutanta}) and (\ref{eq:commutantb}) are together equivalent to the condition
\begin{equation}\label{eq:realST}
\Cl_D(A)^\circ\subset A',
\end{equation}
and the three conditions \eqref{eq:commutant} are together equivalent to
\begin{equation}\label{eq:2ndorder}
\Cl_D(A)^\circ\subset\Cl_D(A)'.
\end{equation}
In view of the above considerations, we can interpret \eqref{eq:realST} by saying that $H$ is an $A$-$\Cl_D(A)$-bimodule, where the left action of $A$ is given by its inclusion as a subalgebra of $\mathcal{B}(H)$ and the right action of $\Cl_D(A)$ is given by \eqref{eq:rightaction}.
Since \eqref{eq:realST} is also equivalent to $A^\circ\subset\Cl_D(A)'$, we can also interpret it by saying that $H$ is a $\Cl_D(A)$-$A$-bimodule, where now the left action of $\Cl_D(A)$ is given by its inclusion as a subalgebra of $\mathcal{B}(H)$ and the right action of $A$ is given by \eqref{eq:rightaction}. Finally, \eqref{eq:2ndorder} can be interpreted as saying that $H$ is a $\Cl_D(A)$-$\Cl_D(A)$-bimodule.

\smallskip

One can weaken the conditions defining (real, even) spectral triples in several ways. In particular, we will need a weaker version of (\ref{eq:Jb}), which we describe as follows.
Let $\tau\in\mathcal{B}(H)$. We call $J$ a \emph{$\tau$-twisted} real structure if $J\tau$ preserves the domain of $D$ and,
instead of (\ref{eq:Jb}), satisfies the condition
\begin{equation}\label{eq:twistedJ}
\tag{\protect\ref{eq:Jb}${}^\prime$}
\tau JD=\varepsilon'DJ\tau .
\end{equation}
In the cases studied in this paper the \emph{twist} $\tau$ will also satisfy $\tau=\tau^*=\tau^{-1}$ and will commute with both $J$ and $A$. Such a structure is a special case of the twist described in \cite{BCDS16,BDS19}.

\subsection{Morita equivalence}\label{sec:Morita}
For this part we will refer to \cite{RW98}.
Given two (complex) $C^*$-algebras $A$ and $B$,
an $A$-$B$ \emph{imprimitivity bimodule} is a pair $(E,\phi)$ of a full right Hilbert $B$-module $E$ and an isomorphism $\phi\colon A\to\mathcal{K}_B(E)$ from $A$ to the $C^*$-algebra of right $B$-linear compact endomorphisms of $E$.
Two $C^*$-algebras $A$ and $B$ are called \emph{Morita equivalent} if an $A$-$B$ imprimitivity bimodule exists.\footnote{At first, M.A.~Rieffel used the name ``strong Morita equivalence'' to distinguish it from the algebraic version of this notion, but it is now customary to omit the word ``strong''. Furthermore, if $A$ and $B$ are \emph{unital} $C^*$-algebras, it is known that they are strongly Morita equivalent if and only if they are Morita equivalent as rings \cite{Bee82}.}
An $A$-$A$ imprimitivity bimodule is also called a \emph{self-Morita equivalence} bimodule.

For a more concrete example, if $V\to X$ is a complex Hermitian vector bundle over a compact Hausdorff space $X$, $C(X)$ is the $C^*$-algebra of continuous functions on $X$, and $\Gamma(\operatorname{End}(V))$ is the $C^*$-algebra of continuous sections of the endomorphism bundle of $V$, then the set of continuous sections of $V$, $\Gamma(V)$, is a $\Gamma(\operatorname{End}(V))$-$C(X)$ imprimitivity bimodule.

Every $C^*$-algebra $B$ is a self-Morita equivalence $B$-bimodule. If $B$ is unital,
every finitely generated projective right $B$-module is a full right Hilbert $B$-module (with a canonical $B$-valued inner product). If $A$ and $B$ are two unital $C^*$-algebras, every $A$-$B$ imprimitivity bimodule is finitely generated and projective (both as a left $A$-module and right $B$-module) \emph{cf.}~\cite[Ex.~4.20]{GBV01}. In particular, if $B$ is unital, every self-Morita equivalence $B$-bimodule is finitely generated and projective.

If $B$ is finite-dimensional (and therefore unital),
every finitely generated and projective right $B$-module is a finite-dimensional complex vector space. Using the structure theorem for finite-dimensional complex $C^*$-algebras it is easy to show that, conversely, every finite-dimensional complex vector space $H$ carrying a right action of $B$ is finitely generated and projective as a right $B$-module. If $A$ and $B$ are finite-dimensional, an $A$-$B$ imprimitivity bimodule is then just a pair $(H,\phi)$ of a finite-dimensional complex vector space $H$ carrying a right action $\triangleleft$ of $B$ and an isomorphism $\phi\colon A\to (\triangleleft\,B)'$ (every right $B$-linear endomorphism is adjointable and also compact in the finite-dimensional case).

\begin{df}
Let $H$ be a Hilbert space, $A,B\subset\mathcal{B}(H)$ two $C^*$-subalgebras, and $J$ an antilinear isometry on $H$. 
Regarding $H$ as a right $B$-module with right action given by \eqref{eq:rightaction}, we say that $J$ implements a Morita equivalence between $A$ and $B$, and write
$$
A\sim_J B
$$
if there exists a dense vector subspace $E\subset H$ such that the pair $(E,\phi=\id_A)$ is an $A$-$B$ imprimitivity bimodule.\footnote{Here we think of $A$ and $B$ as concrete $C^*$-algebras of bounded operators on $H$ and use $J$ to define a right action of one of the two algebras, while the isomorphism $\phi\colon A\to\mathcal{K}_B(E)$ is just the identity.
Notice that $A$ must commute with $\triangleleft\,B=B^\circ$, not with the left action of $B$. We implicitly assume that $A$ and $B^\circ$ preserve the subspace $E$.} Concretely, this means that $A=\mathcal{K}_B(E)$.
\end{df}

\begin{rem}
If $H$ is finite-dimensional, one has $A\sim_J B$ if and only if $A=(B^\circ)'$. This can be equivalently rephrased as $A'=B^\circ$ (by the double commutant theorem), as $A^\circ=B'$ (since $[a,b^\circ]=0$ if and only if $[a^\circ,b]=[b^\circ,a]^\circ=0$), or as $(A^\circ)'=B$.
\end{rem}

We can now strengthen the conditions \eqref{eq:realST} and \eqref{eq:2ndorder} by requiring that the inclusions are equalities.

\begin{df}\label{df:5}
Let $(A,H,D,J,(\gamma),(\tau))$ be a unital real (even, $\tau$-twisted) spectral triple, $\overline{A}$ the norm-closure of $A$, $\Cl_D^\gamma(A)$ the C*-algebra generated by $\Cl_D(A)$ and $\gamma$ the grading (in the even case). We will call the spectral triple:
\begin{subequations}
\begin{align}
\text{\emph{spin}} &\iff \Cl_D(A)\sim_J\overline{A} , \label{eq:spin} \\
\text{\emph{even-spin}} &\iff \Cl^\gamma_D(A)\sim_J\overline{A} , \label{eq:evenspin} \\
\text{\emph{Hodge}} &\iff \Cl_D(A)\sim_J\Cl_D(A) . \label{eq:Hodge}
\end{align}
\end{subequations}
\end{df}

\noindent
Notice that \eqref{eq:Hodge} implies \eqref{eq:2ndorder}, while \eqref{eq:spin} and \eqref{eq:evenspin} both imply \eqref{eq:realST}. Less obvious is that \eqref{eq:spin} implies \eqref{eq:evenspin}, as shown in the first part of the next proposition.

\begin{prop}\label{prop:6}
Let $(A,H,D,J,\gamma)$ be a unital real (possibly with twisted real structure) even spectral triple.
\begin{itemize}
\item[(i)] If \eqref{eq:spin} is satisfied, then $\gamma\in\Cl_D(A)$ and \eqref{eq:evenspin} is satisfied as well.

\item[(ii)] If \eqref{eq:2ndorder} is satisfied and $\gamma\in\Cl_D(A)$, then $\Omega^1_D(A)=0$.
\end{itemize}
\end{prop}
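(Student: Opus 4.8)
The plan is to treat the two parts separately, with the grading $\gamma$ the central object in both. First I would record three elementary facts about how $\gamma$ interacts with the other data, all following from $[\gamma,A]=0$, $\gamma D=-D\gamma$, $\gamma=\gamma^*$ and \eqref{eq:Jc}, and none of them using \eqref{eq:Jb} (so they survive in the twisted case). (a) Since $\gamma$ commutes with $A$ and anticommutes with $D$, it anticommutes with every $1$-form: for $a,b\in A$ one has $\gamma a[D,b]=a\gamma[D,b]=-a[D,b]\gamma$, so $\gamma\omega=-\omega\gamma$ for all $\omega\in\Omega^1_D(A)$. (b) Since $\gamma=\gamma^*$, condition \eqref{eq:Jc} gives $\gamma^\circ=J\gamma^*J^{-1}=J\gamma J^{-1}=\varepsilon''\gamma$. (c) A short sign computation shows that $\gamma$ also commutes with $A^\circ$: writing $a^\circ=Ja^*J^{-1}$ and carrying $\gamma$ across the two factors $J^{\pm1}$ by \eqref{eq:Jc} produces two copies of $\varepsilon''$, which cancel since $(\varepsilon'')^2=1$, leaving $[\gamma,a^\circ]=0$.

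For part (i), I would interpret \eqref{eq:spin}, via the Morita--commutant dictionary, as the identity $\Cl_D(A)=(\overline{A}^\circ)'$: the algebras are unital, so the imprimitivity bimodule $E$ is finitely generated projective and $\mathcal{K}_{\overline{A}}(E)$ coincides with the algebra of $\overline{A}^\circ$-linear operators. By (c) and continuity $\gamma$ commutes with $\overline{A}^\circ$, i.e. $\gamma\in(\overline{A}^\circ)'=\Cl_D(A)$. Once $\gamma\in\Cl_D(A)$, the $C^*$-algebra $\Cl^\gamma_D(A)$ generated by $\Cl_D(A)$ and $\gamma$ equals $\Cl_D(A)$ itself, so the relation $\Cl^\gamma_D(A)\sim_J\overline{A}$ reduces verbatim to \eqref{eq:spin}; this is exactly \eqref{eq:evenspin}.

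For part (ii), I would argue that $\gamma$ is forced to be central in $\Cl_D(A)$. By hypothesis $\gamma\in\Cl_D(A)$, so by (b) $\varepsilon''\gamma=\gamma^\circ\in\Cl_D(A)^\circ$, and \eqref{eq:2ndorder} gives $\Cl_D(A)^\circ\subseteq\Cl_D(A)'$; hence $\gamma\in\Cl_D(A)'$, so $\gamma$ commutes with all of $\Cl_D(A)$ and in particular with $\Omega^1_D(A)$. But by (a) $\gamma$ anticommutes with $\Omega^1_D(A)$. For any $\omega\in\Omega^1_D(A)$ the two relations give $\omega\gamma=-\omega\gamma$, hence $\omega\gamma=0$, and since $\gamma^2=1$ is invertible, $\omega=0$. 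Therefore $\Omega^1_D(A)=0$.

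The sign bookkeeping in (b), (c) and the commutator identities in (a) are routine, and part (ii) is then purely algebraic. I expect the only genuinely delicate point to lie in part (i): the identification of \eqref{eq:spin} with the commutant equality $\Cl_D(A)=(\overline{A}^\circ)'$ beyond the finite-dimensional case. One must invoke that the dense imprimitivity bimodule $E$ is finitely generated projective (so that all right $\overline{A}^\circ$-linear endomorphisms are compact in the module sense) and that $\gamma$ preserves $E$, so that membership in the commutant genuinely places the $C^*$-element $\gamma$ inside $\Cl_D(A)$ rather than merely in its weak closure.
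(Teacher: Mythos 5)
Your proposal is correct in substance and follows essentially the same route as the paper: both parts hinge on the facts that $\gamma$ commutes with $A$, anticommutes with $\Omega^1_D(A)$, and satisfies $\gamma^\circ=\varepsilon''\gamma$ by \eqref{eq:Jc}, and your part (ii) is verbatim the paper's argument. The one point that needs repair is the opening claim of part (i): the identity $\Cl_D(A)=(\overline{A}^\circ)'$, with the commutant taken in $\mathcal{B}(H)$, is \emph{false} in infinite dimensions --- the commutant is weakly closed, hence a von Neumann algebra, whereas $\Cl_D(A)$ is only a $C^*$-algebra (for the canonical triple of a spin manifold, $\Cl_D(A)$ consists of continuous Clifford sections while the commutant of $\overline{A}^\circ$ contains all essentially bounded measurable sections of the endomorphism bundle); the paper's Remark asserts that equivalence only for finite-dimensional $H$. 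The paper avoids the commutant entirely and argues directly on $E$: the restriction of $\gamma$ to $E$ commutes with the right $\overline{A}$-action, so by \eqref{eq:spin}, i.e.\ $\Cl_D(A)=\mathcal{K}_{\overline{A}}(E)$ (with finite generation and projectivity of $E$ ensuring that adjointable module endomorphisms are compact), there exists $\xi\in\Cl_D(A)$ with $\gamma-\xi=0$ on $E$, and density of $E$ together with boundedness forces $\gamma=\xi$ on all of $H$. Your closing paragraph identifies exactly this fix, including the tacit assumption that $\gamma$ preserves $E$ (which the paper also makes implicitly), so your argument, read with that paragraph incorporated rather than the commutant identity, coincides with the paper's proof.
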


\begin{proof}
(i) Since $\gamma\in A'$, one has $\gamma^\circ\in (A^\circ)'$. But $\gamma^\circ=\pm\gamma$ due to (\ref{eq:Jc}), hence the restriction of $\gamma$ to $E$ commutes with the right action of $\overline{A}$.
If \eqref{eq:spin} is satisfied, there must exist an element $\xi\in\Cl_D(A)$ such that $\gamma-\xi$ is zero on $E$, but since it is a bounded operator, it must be zero on the whole of $H$.

\medskip

\noindent
(ii) If $\gamma\in\Cl_D(A)$ and \eqref{eq:2ndorder} is satisfied, one has $\gamma=\pm \gamma^\circ\in\Cl_D(A)'$ as well, and it follows that every $1$-form $\omega$ commutes with $\gamma$. However, all $1$-forms must also anticommute with $\gamma$, since $\gamma$ commutes with $A$ and anticommutes with $D$. Therefore we have $\omega=\frac{1}{2}\gamma\big([\gamma,\omega]+(\gamma\omega+\omega\gamma)\big)=0$.
\end{proof}

A special class of spectral triples with $\gamma\in\Cl_D(A)$ is given by the so-called orientable spectral triples. Recall that a spectral triple $(A,H,D)$ is called \emph{orientable}\footnote{According to the original terminology of \cite{Con96}.} if there is a Hochschild $n$-cycle
$$
c=\sum_{\text{finite}} a_0\otimes a_1\otimes\ldots\otimes a_n \in Z_n(A,A)
$$
for $a_0,\ldots,a_n\in A$ such that $\sum_{\text{finite}} a_0[D,a_1]\cdots[D,a_n]$ is equal to either $1$ or, in the even case, to $\gamma$.
In particular, an even orientable spectral triple has $\gamma\in\Cl_D(A)$.
Prop.\ \ref{prop:6}(ii) shows that on an orientable spectral triple satisfying the second-order (or Hodge) condition, all $1$-forms are zero.

\medskip

For real spectral triples there is another notion of orientation.
A real spectral triple is called \emph{real-orientable} if there is a Hochschild $n$-cycle \cite{Var06}
$$
c=\sum_{\text{finite}} (a_0\otimes b_0)\otimes a_1\otimes\ldots\otimes a_n \in Z_n(A,A\otimes A^\circ)
$$
for $a_0,\ldots,a_n\in A$ and $b_0\in A^\circ$ such that $\sum_{\text{finite}} a_0b_0[D,a_1]\cdots[D,a_n]$ is equal to either $1$ or, in the even case, to $\gamma$.\footnote{In \cite[\S18.1]{CM08} another version is presented where the elements $a_1,\ldots,a_n$ are required to commute with $J$.} 

In general, real-orientability is a weaker notion than orientability.
In the example of the Hodge-de Rham spectral triple (which will be described in the sections to follow), one has $JaJ^{-1}=a^*$ for all $a\in A$ so that $A=A^\circ$ and the two notions coincide. In particular, the Hodge-de Rham spectral triple reviewed in the next \S\ref{sec:HdR} is orientable if one takes the appropriate grading operator.

\subsection{Closed oriented Riemannian manifolds}\label{sec:HdR}
There are two main classes of spectral triples $(A,H,D)$ that one can associate to a closed oriented Riemannian manifold $M$. For one, $H=L^2(\bigwedge_{\C}^\bullet T^*M)$ is the Hilbert space of square-integrable (complex-valued) forms on $M$ and $D$ is the Hodge-de Rham operator. If $M$ is a spin manifold, the other is obtained by using as the Hilbert space $H$ the space of square-integrable spinors on $M$, and $D=\D$ is the Dirac operator corresponding to the spin structure. In both cases, $A=C^\infty(M)$ is the algebra of smooth functions on $M$ acting on $H$ by pointwise multiplication.

Any commutative unital spectral triple, satisfying a suitable additional set of axioms (listed for example in \cite{Con13}), turns out to be of one of these two types. More precisely, depending on the axioms, from a commutative unital spectral triple one can reconstruct either a closed oriented Riemannian manifold or a spin$^c$ manifold (see Theorems 1.1 and 1.2 in \cite{Con13}), and in the latter case the reality condition selects spin manifolds among spin$^c$. This last step follows from an algebraic characterisation of spin$^c$ manifolds in terms of Morita equivalence: such an equivalence is implemented by a reality operator $J$ exactly when the manifold is spin. This characterisation is recalled for example at the beginning of \cite{DD14} and motivates the first part of Def.~\ref{df:5}. 
In this section we spell out the construction of the spectral triple given by the Hodge-de Rham operator on differential forms and discuss some aspects related to the self-Morita equivalence of the Clifford algebra and how to implement it by means of a reality operator. We will adopt the notations and conventions of \cite{GBV01,LM89}.

\smallskip

Let $M$ be a closed oriented $n$-dimensional Riemannian manifold with metric tensor $g$. In the following, we let
\begin{equation}
A\coloneqq C^\infty(M)
\end{equation}
be the algebra of complex-valued smooth functions on $M$, $\overline{A}=C(M)$ the one of continuous functions,
$\Omega^\bullet_{\C}(M)$
the space of smooth sections of the complexified bundle of forms $\bigwedge_{\C}^\bullet T^*M\to M$, and
\begin{equation}\label{eq:EHodge}
E\coloneqq \Gamma({\textstyle\bigwedge^\bullet_{\C}}T^*M)
\end{equation}
the $C(M)$-module of continuous sections.
The Riemannian metric $g$ induces a Hermitian product on the fibres of the bundle $\bigwedge_{\C}^\bullet T^*M$, and a $C(M)$-valued Hermitian product on $E$, given by (see \emph{e.g.},~\S9.B of \cite{GBV01})
$$
(\eta,\xi)\coloneqq \det\bigl[g^{-1}(\overline{\eta}_i,\xi_j)\bigr] 
$$
for all products of $1$-forms $\eta=\bigwedge_{i=1}^k\eta_i$ and $\xi=\bigwedge_{j=1}^k\xi_j$, which is extended to $E$ by linearity and by declaring that forms with different degree are mutually orthogonal.
With the above Hermitian structure, $E$ becomes a full right Hilbert $\overline{A}$-module (like any module of continuous sections of a Hermitian vector bundle on $M$).

We let $H\coloneqq L^2(\bigwedge^\bullet_{\C}T^*M)$ be the Hilbert space completion of $E$ with respect to the inner product
$$
\inner{\eta,\xi}\coloneqq \int_M(\eta,\xi)\,\omega_g ,
$$
where $\omega_g$ is the Riemannian volume form, given on any positively oriented chart by $\omega_g=\sqrt{\det(g)}dx^1\wedge\ldots\wedge dx^n$.

The Hodge star operator $\star$ on real-valued $k$-forms is implicitly defined by the equality
$$
\eta\wedge (\star\,\xi)=(\eta,\xi)\omega_g
$$
for all real $k$-forms $\eta$ and $\xi$, and satisfies the well-known relations
$$
\star^2=(-1)^{k(n-k)}$$ on $k$-forms, and $${\star}\,\circ\,\dd\,\circ\;{\star}={}(-1)^{n(k+1)+1}\dd^*,
$$
where $\dd$ is the exterior derivative on (smooth) forms and $\dd^*$ is its formal adjoint.
The Hodge star operator can be extended to complex-valued forms linearly, \emph{e.g.},~as in \cite{Wel80}, or antilinearly, \emph{e.g.},~as in \cite{Sch63}. We adopt the first convention.

\begin{prop}[{\protect\cite{GBV01,LM89}}]\label{prop:HdR}
With $A=C^\infty(M)$ and $H=L^2(\bigwedge^\bullet_{\C}T^*M)$ as above, and with the closure of the (essentially self-adjoint elliptic \cite{LM89}) operator $\dd+\dd^*$ (the Hodge-de Rham operator of $(M,g)$), we get a unital even spectral triple $(A,H,\dd+\dd^*)$.
\end{prop}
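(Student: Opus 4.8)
The plan is to verify the three defining properties of a unital spectral triple (a bounded unital $*$-representation of the algebra, a self-adjoint Dirac operator with compact resolvent, and bounded commutators preserving the domain) together with the existence of a grading, treating the deep analytic inputs as the classical facts they are and concentrating the actual work on the algebraic and boundedness checks. First I would observe that $A=C^\infty(M)$ acts on $H$ by pointwise multiplication, which is manifestly a unital $*$-representation: the constant function $1$ is the identity operator, the involution is complex conjugation $f\mapsto\bar f$, and the pointwise estimate $\|f\psi\|\le\|f\|_\infty\|\psi\|$ shows each $f$ is bounded (with $\|f\|_\infty<\infty$ since $M$ is compact).

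For self-adjointness and the compact resolvent of $D=\dd+\dd^*$ I would lean directly on the classical theory cited in \cite{LM89}. On the core of smooth forms $\Omega^\bullet_{\C}(M)$ the operator $D$ is symmetric and, being a first-order elliptic operator on the closed manifold $M$, essentially self-adjoint; its closure is the self-adjoint operator we take. For the resolvent the key observation is that $D^2=(\dd+\dd^*)^2=\dd\dd^*+\dd^*\dd$ is the Hodge Laplacian, a second-order elliptic operator on a compact manifold, so by standard elliptic theory (Rellich's lemma) it has discrete spectrum with finite-dimensional eigenspaces accumulating only at $+\infty$. Hence $(D^2+1)^{-1}$ is compact, and therefore so is $(D\pm i)^{-1}$.

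The commutator condition is the one genuine computation. For $f\in A$ and a smooth form $\psi$ I would compute $[\dd,f]\psi=\dd(f\psi)-f\,\dd\psi=\dd f\wedge\psi$, so that $[\dd,f]$ is exterior multiplication by the $1$-form $\dd f$; passing to formal adjoints gives $[\dd^*,f]=-\big(\dd\bar f\wedge(\cdot)\big)^{*}$, which is the contraction $X\contra(\cdot)$ by the vector field $X=(\dd\bar f)^\sharp$ metric-dual to $\dd\bar f$. Both pieces are bounded, with operator norm controlled by $\|\dd f\|_\infty<\infty$, and their sum is, up to sign conventions, Clifford multiplication by $\dd f$. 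Since this bounded operator agrees with $[D,f]$ on the core $\Omega^\bullet_{\C}(M)$, the commutator extends boundedly to all of $H$; a routine density argument then upgrades the evident inclusion $f\,\Omega^\bullet_{\C}(M)\subset\Omega^\bullet_{\C}(M)$ to $f\operatorname{Dom}(D)\subset\operatorname{Dom}(D)$ for the closure.

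Finally, for the even structure I would take the parity grading $\gamma\coloneqq(-1)^{\deg}$, acting as $(-1)^k$ on $k$-forms. It is bounded, self-adjoint and squares to $1$; it commutes with $A$ because multiplication preserves form degree; and it anticommutes with $D$ because $\dd$ raises degree by one while $\dd^*$ lowers it by one, so both reverse parity. The main obstacle in all of this is in fact already discharged by citation: essential self-adjointness and compactness of the resolvent rest on the full strength of elliptic theory on closed manifolds, whereas everything specific to the spectral-triple packaging — the boundedness of $[D,f]$ and the grading axioms — reduces to elementary differential geometry.
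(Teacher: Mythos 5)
Your proposal is correct and takes essentially the same approach as the paper, which states this proposition as a classical fact cited from \cite{GBV01,LM89}: the analytic core (essential self-adjointness of the first-order elliptic operator $\dd+\dd^*$ on the closed manifold $M$, hence discreteness of the spectrum and compactness of the resolvent) is delegated to those references, and the remaining checks you spell out — boundedness of $[D,f]$ as a pointwise, Clifford-type operator built from $\dd f\wedge(\cdot)$ and a contraction, the density argument for $f\operatorname{Dom}(D)\subset\operatorname{Dom}(D)$, and the parity grading $\gamma=(-1)^k$ — are exactly the routine verifications the paper leaves implicit (it notes the same facts later, \emph{e.g.}\ that $\dd+\dd^*$ anticommutes with $\gamma$ and that commutators with functions are Clifford multiplication). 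The only nitpick is a harmless convention slip in your formula for $[\dd^*,f]$ (the adjoint of $\dd\bar f\wedge(\cdot)$ is contraction by the metric dual of $\overline{\dd\bar f}=\dd f$, and your displayed identification drops a sign/conjugation), which has no bearing on the boundedness argument.
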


Two natural gradings $\gamma$ and $\chi$ are given on $k$-forms by
$$
\gamma\coloneqq (-1)^k
$$
and (see \emph{e.g.},~the proof of Theorem 11.4 in \cite{Con13})
$$
\chi\coloneqq i^{-\frac{n(n+1)}{2}}(-1)^{k(n-k)+\frac{k(k+1)}{2}}\star .
$$
Evidently $\dd+\dd^*$ anticommutes with $\gamma$ and, since $\chi\circ \dd\circ\:\chi=(-1)^{n+1}\dd^*$, if $n$ is even $\chi$ also anticommutes with $\dd+\dd^*$.

If $n$ is odd, since $\dd+\dd^*$ and $\chi$ commute one can use this grading to reduce the Hilbert space and build a spectral triple on the eigenspace of $\chi$ with eigenvalue $+1$ (which is what is done \emph{e.g.},~in \cite{Con13}). We will not follow this approach, since we want $H$ to be isomorphic to the space of sections of the Clifford algebra bundle (up to a completion) in both the odd- and even-dimensional cases.

\medskip

In order to talk about the Hodge condition, we must first recall the (geometric) definition of the Clifford algebra bundle $\Cl_{\C}(M,g)\to M$. The fibre at a point $p\in M$ is the unital associative complex algebra generated by vectors $v,w\in T^*_pM$ under the relation $v\cdot w+w\cdot v=2g^{-1}(v,w)$.\footnote{Here we use a different sign convention than in \S\ref{sec:torus} to adapt to the convention in \cite{GBV01}.
The choice of sign in front of $g$ is in any case inessential, since the \emph{complexified} Clifford algebra is invariant under the replacement $g\to -g$ (up to isomorphism).} Denoting by
$\contra$ the interior product, which is the adjoint of the (left) exterior product, a left action $\lambda$ and a right action $\rho$ (an anti-representation) of the algebra $\Gamma(\Cl_{\C}(M,g))$ on forms are given on each fibre by
$$
\lambda(v)w\coloneqq v\wedge w+v\contra w\qquad\text{and}\qquad
\rho(v)w\coloneqq (-1)^k(v\wedge w-v\contra w)
$$
for all $v\in T_p^*M$ and $w\in\bigwedge^k_{\C}T^*_pM$.\footnote{We follow the conventions of \cite[\S5.1]{GBV01}. In particular, $\lambda(v)^2=g^{-1}(v,v)$ with a $+$ sign, in contrast to the $-$ sign in \cite[Proof of Thm.~11.4]{Con13}.}
We will refer to $\lambda$ and $\rho$ as left and right Clifford multiplication.
They turn the space $E$ in \eqref{eq:EHodge} into a $\Gamma(\Cl_{\C}(M,g))$-bimodule.
One can show that (see \cite{Con13} or \cite{LM89}) the grading $\chi$ is given at each point by\footnote{Globally, it is proportional to left Clifford multiplication by the Riemannian volume form. Notice the different phase compared to \cite{Con13}, due to our different sign conventions.}
$$
\chi=i^{-\frac{n(n-1)}{2}}\lambda(e^1e^2\cdots e^n),
$$
where $(e^i)_{i=1}^n$ is a positively oriented orthonormal basis of $T^*_pM$.

A vector bundle isomorphism $\Cl_{\C}(M,g)\to\bigwedge^\bullet_{\C}T^*M$ is given on each fibre by
\begin{equation}\label{eq:orthosigma}
e^{i_1}e^{i_2}\cdots e^{i_k}\mapsto e^{i_1}\wedge e^{i_2}\wedge\ldots\wedge e^{i_k},
\end{equation}
for $1\leq i_1<i_2<\ldots<i_k\leq n$, or more generally by
$$
\sigma\colon v_1v_2\cdots v_k\mapsto\lambda(v_1)\lambda(v_2)\cdots\lambda(v_k)1
$$
for all $v_1,\ldots,v_k\in T_p^*M$. 
The inverse map \cite[Eq.~(5.4)]{GBV01}
\begin{equation}\label{eq:Qmap}
Q(v_1\wedge\ldots\wedge v_k)=\frac{1}{k!}\sum_{\pi\in\mathcal{S}_k}(-1)^{\|\pi\|}v_{\pi(1)}\cdots v_{\pi(k)}
\end{equation}
will be useful later on. The maps $\sigma$ and $Q$ are called the \emph{symbol map} and the \emph{quantisation map} respectively in \cite{GBV01}.

These maps give a vector space isomorphism $\Gamma(\Cl_{\C}(M,g))\to E$ on sections intertwining the left/right multiplication of the algebra 
$\Gamma(\Cl_{\C}(M,g))$ on itself with the left/right Clifford multiplication on $E$. The next proposition then follows.

\begin{prop}
$E$ is a self-Morita equivalence $\Gamma(\Cl_{\C}(M,g))$-bimodule.
\end{prop}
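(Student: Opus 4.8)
The plan is to reduce the proposition to the general fact, recalled in \S\ref{sec:Morita}, that every unital $C^*$-algebra is a self-Morita equivalence bimodule over itself, and then to transport that structure to $E$ along the bimodule isomorphism furnished by the symbol and quantisation maps. Set $B\coloneqq\Gamma(\Cl_{\C}(M,g))$. Since $M$ is compact and the fibres of $\Cl_{\C}(M,g)$ are finite-dimensional complex $C^*$-algebras (multimatrix algebras), $B$ is a unital $C^*$-algebra. As such it is a self-Morita equivalence $B$-bimodule: the right and left actions are right and left multiplication, the $B$-valued inner products are $\langle a,b\rangle_B=a^*b$ and ${}_B\langle a,b\rangle=ab^*$, and the isomorphism $\phi=\id$ realises $B\cong\mathcal{K}_B(B)$ by left multiplication, where unitality guarantees that every adjointable endomorphism is compact.

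The key step is that $\sigma$ (with inverse $Q$) is precisely a $B$-bimodule isomorphism from $B$, acting on itself, to $E$ equipped with the actions $\lambda$ and $\rho$: by construction $\sigma(ba)=\lambda(b)\sigma(a)$ and $\sigma(ab)=\rho(b)\sigma(a)$ for all $a,b\in B$, which is the intertwining property stated just above. First I would define the two $B$-valued inner products on $E$ by transport, $\langle\sigma(a),\sigma(b)\rangle_B\coloneqq a^*b$ and ${}_B\langle\sigma(a),\sigma(b)\rangle\coloneqq ab^*$; then right $B$-linearity, positivity, fullness (from $1=1^*1$), and the compatibility ${}_B\langle\eta,\xi\rangle\zeta=\eta\langle\xi,\zeta\rangle_B$ all follow at once from the corresponding identities in $B$, because $\sigma$ intertwines both multiplications. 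Transporting $\phi=\id$ along $\sigma$ shows that left Clifford multiplication gives an isomorphism $\lambda\colon B\to\mathcal{K}_B(E)$, and the proposition follows.

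The only point that genuinely uses the geometry of the Clifford bundle, rather than purely formal arguments, is the verification that the left action exhausts all right $B$-linear compact endomorphisms, i.e.\ that $\lambda(B)=\mathcal{K}_B(E)$; via the transport this is automatic from unitality of $B$, but I would also note its fibrewise content, namely that left and right Clifford multiplication are mutual commutants inside $\mathrm{End}(\bigwedge^\bullet_{\C}T_p^*M)$ — the double-commutant property of a multimatrix algebra acting on itself. A secondary check, not needed for the bare statement but natural for consistency with the Hodge inner product used earlier, is to confirm that the abstractly transported inner product agrees with the geometric Hermitian product $(\cdot,\cdot)$; I expect this to be the main computational obstacle, and would carry it out fibrewise on a positively oriented orthonormal basis using the determinant formula together with the definitions of $\lambda$, $\rho$, and $Q$.
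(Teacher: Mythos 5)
Your proposal is correct and follows essentially the same route as the paper: the paper's (very terse) proof is exactly the observation that the symbol/quantisation maps give an isomorphism $\Gamma(\Cl_{\C}(M,g))\to E$ intertwining left/right algebra multiplication with left/right Clifford multiplication, so that the canonical self-Morita equivalence of the unital $C^*$-algebra over itself transports to $E$. Your write-up merely makes explicit the details the paper leaves implicit (the transported $B$-valued inner products, fullness, and $\lambda(B)=\mathcal{K}_B(E)$), and your closing consistency check against the geometric Hermitian product is optional, as you correctly note.
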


An involution on sections of the Clifford algebra bundle is defined as follows. At each point $p$, on products of real cotangent vectors we have
$$
(v_1v_2\cdots v_k)^*\coloneqq v_k\cdots v_2v_1
$$
for all $v_1,\ldots,v_k\in T_p^*M$. The map is then extended antilinearly to the fibre of $\Cl_{\C}(M,g)$ at $p$, and pointwise to the algebra of continuous sections. The left Clifford action then transforms this involution into the adjoint operation.

\begin{lemma}
One has $$\lambda(\xi)^*=\lambda(\xi^*)$$ for all $\xi\in\Gamma(\Cl_{\C}(M,g))$.
\end{lemma}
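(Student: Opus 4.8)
The plan is to reduce the identity to the case of a single real cotangent covector, where it becomes exactly the statement that the interior product is the adjoint of exterior multiplication---which is precisely how $\contra$ was defined.

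First I would record that both sides are \emph{antilinear} and \emph{antimultiplicative} in $\xi$. Since $\lambda$ is a representation and the adjoint is antilinear and order-reversing, the assignment $\xi\mapsto\lambda(\xi)^*$ obeys $\lambda(\xi\eta)^*=\lambda(\eta)^*\lambda(\xi)^*$; dually, since $*$ is antilinear and reverses products by definition (recall $(v_1\cdots v_k)^*=v_k\cdots v_1$) while $\lambda$ is multiplicative, the assignment $\xi\mapsto\lambda(\xi^*)$ obeys $\lambda((\xi\eta)^*)=\lambda(\eta^*)\lambda(\xi^*)$. One checks in the same way that both assignments are conjugate-linear over $C(M)$: left Clifford multiplication is $C(M)$-linear, so a multiplication operator passes through the adjoint producing a complex conjugate. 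Two conjugate-$C(M)$-linear, antimultiplicative maps that agree on a set of algebra generators must coincide; since $\Gamma(\Cl_{\C}(M,g))$ is generated as a $C(M)$-algebra by the real $1$-forms, it therefore suffices to prove $\lambda(v)^*=\lambda(v^*)$ when $v$ is a real $1$-form.

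For such $v$ one has $v^*=v$, so the claim reduces to the self-adjointness $\lambda(v)^*=\lambda(v)$. Because left Clifford multiplication acts pointwise and is $C(M)$-linear, its adjoint with respect to the Hilbert-module (equivalently the $L^2$) inner product is computed fibrewise, so I may work at a single point $p$ and split $\lambda(v)=(v\wedge\,\cdot\,)+(v\contra\,\cdot\,)$ on $\bigwedge^\bullet_{\C}T^*_pM$. By the definition of $\contra$ as the adjoint of the exterior product, for a real covector $v$ one has $(v\wedge\,\cdot\,)^*=(v\contra\,\cdot\,)$ and hence also $(v\contra\,\cdot\,)^*=(v\wedge\,\cdot\,)$. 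Adding the two gives $\lambda(v)^*=(v\contra\,\cdot\,)+(v\wedge\,\cdot\,)=\lambda(v)$, as desired; unwinding the reduction, for a product $\xi=v_1\cdots v_k$ of real covectors one then obtains $\lambda(\xi)^*=\lambda(v_k)^*\cdots\lambda(v_1)^*=\lambda(v_k\cdots v_1)=\lambda(\xi^*)$, and conjugate-linearity extends this to all of $\Gamma(\Cl_{\C}(M,g))$.

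The only genuine subtlety---and the point I would be most careful about---is the compatibility of the two notions of adjoint: $\lambda(\xi)^*$ in the statement is the Hilbert-space adjoint, whereas $\contra$ is defined as the fibrewise adjoint of $\wedge$. Justifying that these coincide, so that the fibrewise computation is legitimate, rests on $\lambda(\xi)$ being $C(M)$-linear and acting fibrewise. Once this is granted, everything else is a purely formal consequence of the order-reversing behaviour of $*$ and of the operator adjoint.
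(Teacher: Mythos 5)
Your proposal is correct and follows essentially the same route as the paper's proof: reduce via antilinearity and antimultiplicativity to the generators (real cotangent vectors), where $\xi^*=\xi$, and conclude self-adjointness of $\lambda(v)=v\wedge{}+v\contra{}$ from the defining relation $v\contra{}=(v\wedge)^*$. Your additional care about identifying the Hilbert-space adjoint with the fibrewise one (justified by $C(M)$-linearity and pointwise action) is a detail the paper leaves implicit, but it does not change the argument.
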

\begin{proof}
Both sides of the equality are antilinear antihomomorphisms. It is enough to prove the equality for generators, which means $\lambda(v)^*=\lambda(v)$
for all real cotangent vectors $v\in T_p^*M$ and all $p\in M$. This immediately follows from the definition of the interior product as the adjoint of the left wedge product: $v\contra=(v\,\wedge )^*$.
\end{proof}

Let $\omega=\omega_1\wedge\omega_2\wedge\ldots\wedge\omega_k$ be a product of $k$ $1$-forms. Two natural antilinear isometries on forms $C_1$ and $C_2$ are given by pointwise complex conjugation
\begin{equation}\label{eq:C1}
C_1\omega=\overline{\omega},
\end{equation}
and its composition with the canonical anti-involution, explicitly,
\begin{align}
C_2\omega&=\overline{\omega}_k\wedge\overline{\omega}_{k-1}\wedge\ldots\wedge\overline{\omega}_1\nonumber\\
&=(-1)^{k(k-1)/2}\overline{\omega}.\label{eq:C2}
\end{align}
We will adopt the notation $\omega^*\coloneqq C_2(\omega)$. We can define an antilinear isomorphism $\xi\mapsto\overline{\xi}$ on the Clifford algebra as well, by declaring it to be the identity on real cotangent vectors \cite[Ex.~5.5]{GBV01}. The symbol map and quantisation map intertwine the two complex conjugations as $Q(\overline{\omega})=\overline{Q(\omega)}$ for all forms $\omega$. One can also check on a basis \eqref{eq:orthosigma} that $\sigma(\xi^*)=\sigma(\xi)^*$, so that the symbol map and quantisation map intertwine the main anti-involutions as well.

\begin{rem}
It is well-known and straightforward to check that, equipped with the antilinear isometry $C_1$, $(A,H,D,C_1)$ is a \emph{real} spectral triple.\footnote{The reality operator usually called \emph{charge conjugation} is given by $\gamma C_1$ \cite[Ex.~5.6]{GBV01}.}
\end{rem}

\begin{lemma}\label{lemma:21}
Let $J$ be the antilinear isometry on $H$ given by
\begin{equation}\label{eq:Jonforms}
J\omega \coloneqq  \omega^* .
\end{equation}
Then 
\begin{enumerate}[label=(\roman*)]
\item for all sections $\xi$ of the Clifford algebra bundle,
$$
J\,\lambda(\xi)\,J^{-1}=\rho(\xi);
$$
\item $J\circ \dd\circ J^{-1}=\dd\circ\gamma$.
\end{enumerate}
\end{lemma}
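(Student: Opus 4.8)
The plan is to treat the two assertions separately but to set up a common computational framework first. I would begin by recording two facts about $J$. Since on $k$-forms $J\omega=\omega^*=(-1)^{k(k-1)/2}\overline\omega$ and $k(k-1)$ is always even, a one-line computation gives $J^2=1$, so $J^{-1}=J$ and I may drop inverses. It is also convenient to factor $J=PC_1=C_1P$, where $C_1\omega=\overline\omega$ is the pointwise conjugation of \eqref{eq:C1} and $P$ is the real, degree-diagonal operator acting as $(-1)^{k(k-1)/2}$ on $k$-forms. Because $P$ has real eigenvalues and $C_1$ is conjugation, $C_1PC_1=P$, and because $\dd$ has real local coefficients, $C_1\dd=\dd C_1$.

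For (i) I would work through the identification $\sigma\colon\Gamma(\Cl_\C(M,g))\to E$ recalled above, under which left and right Clifford multiplication become ordinary left and right multiplication, $\lambda(\eta)\sigma(\zeta)=\sigma(\eta\zeta)$ and $\rho(\eta)\sigma(\zeta)=\sigma(\zeta\eta)$, and under which $J$ becomes the main anti-involution, $J\sigma(\xi)=\sigma(\xi)^*=\sigma(\xi^*)$ (this is exactly the compatibility $\sigma(\xi^*)=\sigma(\xi)^*$ already recorded). Conjugation then reduces to an algebraic identity: using $J^{-1}=J$,
\[
J\lambda(\eta)J^{-1}\sigma(\xi)=J\lambda(\eta)\sigma(\xi^*)=J\sigma(\eta\xi^*)=\sigma\big((\eta\xi^*)^*\big)=\sigma(\xi\,\eta^*)=\rho(\eta^*)\sigma(\xi).
\]
So $J\lambda(\eta)J^{-1}=\rho(\eta^*)$; on the real generators $v$, where $v^*=v$, this is precisely $\rho(v)$, and since conjugation by $J$ is (antilinearly) multiplicative the claim propagates to all sections. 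I would also sanity-check this directly on a generator: writing $\lambda(v)=v\wedge(\cdot)+v\contra(\cdot)$ and conjugating by $J$, the degree-raising piece acquires the sign $(-1)^{k(k-1)/2}(-1)^{(k+1)k/2}=(-1)^{k}$ and the degree-lowering piece acquires $(-1)^{k(k-1)/2}(-1)^{(k-1)(k-2)/2}=(-1)^{k-1}$, so the two recombine into $(-1)^k\big(v\wedge w-v\contra w\big)=\rho(v)w$.

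For (ii) the computation is shorter. Using $J=PC_1$, $C_1\dd=\dd C_1$ and $C_1PC_1=P$ I would rewrite $J\dd J^{-1}=PC_1\,\dd\,C_1P=P\,\dd\,P$. On a $k$-form $\dd$ raises the degree by one, so the two factors of $P$ contribute $(-1)^{k(k-1)/2}$ and $(-1)^{(k+1)k/2}$, whose product is $(-1)^{k^2}=(-1)^{k}$. Thus $J\dd J^{-1}=(-1)^k\dd=\dd\circ\gamma$ on $k$-forms, since $\gamma=(-1)^k$.

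The main obstacle is not analytic but bookkeeping: in (i) one must correctly match the order-preserving nature of conjugation by $J$ (an antilinear algebra homomorphism) against the order-reversing nature of the anti-representation $\rho$, which is exactly what forces the anti-involution $\xi\mapsto\xi^*$ to appear; the conceptual route through $\sigma$ makes this automatic, whereas a purely direct approach requires threading the degree-dependent signs $(-1)^{k(k-1)/2}$ through every degree shift. In (ii) the only thing to watch is that the two values of $P$ differ by $(-1)^k$ rather than $(-1)^{k+1}$, which is precisely what distinguishes $\dd\circ\gamma$ from $\gamma\circ\dd$.
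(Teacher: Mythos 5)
Your proof is correct and takes essentially the same route as the paper: part (i) by transporting $J$ through the symbol/quantisation maps to the main anti-involution, which exchanges left and right Clifford multiplication, and part (ii) by the same degree-by-degree sign computation resting on $C_1\circ\dd=\dd\circ C_1$ and the fact that $k^2$ and $k$ have the same parity. Your intermediate formula $J\lambda(\eta)J^{-1}=\rho(\eta^*)$ is in fact the precise pointwise statement (the lemma's $\rho(\xi)$ holds verbatim on $*$-invariant elements such as real cotangent vectors), and it is exactly what gives the intertwining of the two Clifford actions that the paper's one-line argument invokes and later uses for the Morita equivalence.
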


\begin{proof}
Note that $J^{-1}=J$ and that $J\coloneqq \sigma\circ *\circ Q$.
Since the main anti-involution on the Clifford algebra exchanges left and right multiplication, the corresponding operator on forms intertwines the left and right Clifford actions. 

Let $\omega$ be a $k$-form. Then
\begin{align*}
\dd J\omega&=(-1)^{k(k-1)/2}\dd\overline{\omega}=
(-1)^{k(k-1)/2}\overline{\dd\omega} \\
&=(-1)^{k(k-1)/2}(-1)^{k(k+1)/2}J(\dd\omega)=(-1)^{k^2}J(\dd\omega)=J(\dd(\gamma\omega)) ,
\end{align*}
where last equality follows from the observation that $k^2$ and $k$ have the same parity.
\end{proof}

Since $\dd$ and $\gamma$ anticommute, it follows from previous lemma that
$$
J\circ \dd^*\!\circ\:J^{-1}=(\dd\circ\:\gamma)^*=-\dd^*\!\circ\:\gamma .
$$
If we denote by $D$ the closure of the operator $-i(\dd-\dd^*)$ (called the Hodge-Dirac operator in \cite[Def.~9.24]{GBV01}), then $D$ and the Hodge-de Rham operator $d+d^*$ of Prop.~\ref{prop:HdR} are related by the operator $J$:
$$
J(\dd+\dd^*)J^{-1}=iD\gamma .
$$
Note that the definition of a noncommutative manifold in \cite{FGR99} is motivated by a similar observation, see \cite[p.~102]{FGR99}.

We now adopt $D$ as our Dirac operator and relate the geometric and algebraic definitions of Clifford algebra. We know that $D$ is a Dirac-type operator, given on smooth sections by \cite[p.~426]{GBV01}
$$
D=-i(\dd-\dd^*)=-i\,\lambda\circ\nabla ,
$$
where $\nabla\colon\Omega_{\C}^\bullet(M)\to \Omega^{\bullet+1}_{\C}(M)$ is the Levi-Civita connection.\footnote{Similarly, $(\dd+\dd^*)\gamma=\rho\circ\nabla$.}
In particular, it follows from the Leibniz rule that
\begin{equation}\label{eq:DfHodge}
i[D,f]=\lambda(\dd f) 
\end{equation}
for all $f\in C^\infty(M)$.

\begin{prop}\label{prop:CLDAhodge}
We have $\Cl_D(A)=\Gamma(\Cl_{\C}(M,g))$, which acts on $H$ via $\lambda$.
\end{prop}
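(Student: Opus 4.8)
The plan is to prove the two inclusions $\Cl_D(A)\subseteq\lambda(\Gamma(\Cl_\C(M,g)))$ and $\lambda(\Gamma(\Cl_\C(M,g)))\subseteq\Cl_D(A)$ separately, the two workhorses being the identity \eqref{eq:DfHodge} and the fact that $\lambda$ is a faithful, hence isometric, $*$-representation of the $C^*$-algebra $\Gamma(\Cl_\C(M,g))$ on $H$. Throughout I identify $\Gamma(\Cl_\C(M,g))$ with its image under $\lambda$, as in the statement. Faithfulness of $\lambda$ is already contained in the excerpt: the symbol map $\sigma\colon\xi\mapsto\lambda(\xi)1$ is a vector space isomorphism, so $\lambda(\xi)=0$ forces $\sigma(\xi)=0$ and hence $\xi=0$; combined with the Lemma $\lambda(\xi)^*=\lambda(\xi^*)$, this makes $\lambda$ an isometric $*$-homomorphism whose image is a $C^*$-subalgebra of $\mathcal{B}(H)$.

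For the inclusion ``$\subseteq$'', I would first note that $A=C^\infty(M)$ acts by pointwise multiplication, which is exactly $\lambda$ applied to complex $0$-forms, so $A\subset\lambda(\Gamma(\Cl_\C(M,g)))$. By \eqref{eq:DfHodge} one has $a[D,b]=-i\,\lambda(a)\lambda(\dd b)=-i\,\lambda(a\,\dd b)$, so every generator of $\Omega^1_D(A)$ lies in $\lambda(\Omega^1_\C(M))\subset\lambda(\Gamma(\Cl_\C(M,g)))$. Since $\Cl_D(A)$ is by definition the smallest $C^*$-algebra containing $A$ and $\Omega^1_D(A)$, and $\lambda(\Gamma(\Cl_\C(M,g)))$ is already a $C^*$-algebra containing both, the inclusion follows.

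For the reverse inclusion I would show that one recovers \emph{all} smooth sections before passing to the closure. The key point is that exact $1$-forms generate every smooth $1$-form as a $C^\infty(M)$-module: embedding $M\hookrightarrow\R^N$ and taking the restrictions $x^1,\dots,x^N$ of the ambient coordinates yields global smooth functions whose differentials $\dd x^j$ span $T^*_pM$ at every point, so any $\omega\in\Omega^1_\C(M)$ can be written as a finite sum $\omega=\sum_j\omega_j\,\dd x^j$ with $\omega_j\in C^\infty(M)$. Hence $\lambda(\Omega^1_\C(M))\subseteq\Omega^1_D(A)$, and the two in fact coincide. Because $\lambda$ is an algebra homomorphism, products $\lambda(\omega_1)\cdots\lambda(\omega_k)=\lambda(\omega_1\cdots\omega_k)$ realise $\lambda$ of arbitrary Clifford products; since products of cotangent vectors generate each fibre of $\Cl_\C(M,g)$, the $*$-algebra generated by $A$ and $\Omega^1_D(A)$ contains $\lambda$ of every smooth section. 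As smooth sections are dense in the $C^*$-norm in $\Gamma(\Cl_\C(M,g))$ and $\Cl_D(A)$ is norm-closed, we conclude $\lambda(\Gamma(\Cl_\C(M,g)))\subseteq\Cl_D(A)$.

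The step I expect to be the main obstacle is the module-generation argument in the reverse inclusion: one must ensure that the linear span of $\{a\,\dd b\}$ exhausts \emph{all} smooth $1$-forms, which relies on the existence of enough globally defined smooth functions (differentials spanning every cotangent space) rather than merely local coordinate functions, and then that the fibrewise-faithful $\lambda$ is genuinely isometric so that the norm closures of the smooth and continuous pictures agree. The remaining verifications — that Clifford products of $1$-forms generate each fibre, and that smooth sections are dense in continuous ones — are standard and I would not belabour them.
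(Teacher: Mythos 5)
Your proof is correct, and its skeleton is the same as the paper's: both inclusions rest on \eqref{eq:DfHodge} together with the density of smooth sections in continuous ones. The differences lie in how two technical steps are discharged. First, where the paper shows that sup-norm limits of sections give operator-norm limits via the explicit computation \eqref{eq:provaxi} (handling higher-degree sections chart by chart, where they look like matrices of functions), you derive injectivity of $\lambda$ from the symbol map and invoke the fact that an injective $*$-homomorphism of $C^*$-algebras is isometric; this is more structural, treats all degrees at once, and makes the inclusion $\Cl_D(A)\subseteq\lambda(\Gamma(\Cl_{\C}(M,g)))$ immediate because the image is closed. Second, where the paper patches local data with partitions of unity subordinate to a finite atlas, you use a Whitney embedding to produce finitely many global functions $x^j\in A$ whose differentials span every cotangent space, so that every smooth $1$-form lies in $\Omega^1_D(A)$; this makes explicit a point the paper's proof passes over silently (its ``in other words'' step, which already presupposes $\Omega^1_D(A)=\lambda(\Omega^1_{\C}(M))$). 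What the paper's version buys is that it is elementary and intrinsic: no embedding theorem and no abstract $C^*$-theory. One small caveat on your side: the step from ``products of cotangent vectors generate each fibre'' to ``the $*$-algebra generated by $A$ and $\Omega^1_D(A)$ contains every smooth section'' still needs a partition-of-unity argument (a $C^\infty(M)$-submodule of smooth sections that spans every fibre is the whole module), so partitions of unity are not avoided altogether; but this is standard, and the paper is no more detailed at the corresponding point.
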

\begin{proof}
It follows from \eqref{eq:DfHodge} that the (algebraic) Clifford algebra is the $C^*$-algebra of bounded operators on $H$ generated by smooth functions and Clifford multiplication by $1$-forms. In other words, $\Cl_D(A)$ is generated by smooth sections of the Clifford algebra bundle. The algebra $\Gamma(\Cl_{\C}(M,g))$ on the other hand is generated by continuous functions and continuous sections of $T^*M$ (as one can show by using a partition of unity subordinated to a finite open cover of $M$,
which exists since $M$ is compact). Every continuous function is the norm limit of a sequence of smooth functions. Also every continuous section $\xi$ of $T^*M$ is the norm limit of a sequence of smooth sections, where the norm $\|\lambda(\xi)\|$ is the operator norm on $H$ composed with $\lambda$.
Indeed, for $\xi\in\Gamma(T^*M)$ one has $\lambda(\xi)^*\lambda(\xi)=g^{-1}(\xi,\xi)$ and then
\begin{equation}\label{eq:provaxi}
\|\lambda(\xi)\|^2=\|\lambda(\xi)^*\lambda(\xi)\|=\sup_{p\in M}|\xi(p)|^2 ,
\end{equation}
where the norm on the right hand side is the one on $T_p^*M$ coming from the Riemannian metric.

Now, any continuous $\xi$ can be written as a finite sum of continuous sections each supported on a chart (by using a partition of unity). To conclude the proof it is then enough to show that continuous sections supported on a chart are the norm limit of smooth sections supported on a chart. But this follows trivially from \eqref{eq:provaxi} and the fact that in a chart, sections of the Clifford algebra bundle look like matrices of (continuous/smooth) functions.
\end{proof}

It follows from Prop.~\ref{prop:CLDAhodge} that, for the spectral triple considered here, the dense subspace $E$ of $H$ in \eqref{eq:EHodge} is a self-Morita equivalence $\Cl_D(A)$-bimodule. By Lemma \ref{lemma:21} the main anti-involution $J$ exchanges the left and right Clifford multiplication, so that we can finally make the following claim.

\begin{prop}
The data
$$
(A,H,D,J,\gamma)=\big(C^\infty(M),L^2({\textstyle\bigwedge_{\C}^\bullet T^*M}),-i(\dd-\dd^*),J,\gamma\big) ,
$$
with $J$ given by \eqref{eq:Jonforms} and $\gamma$ given by the degree of forms, comprise an even spectral triple with $\tau$-twisted real structure satisfying the Hodge condition \eqref{eq:Hodge}. The twist $\tau$ is given by $(-1)^{k(k+1)/2}$ times the identity operator on $k$-forms. The KO-dimension of this spectral triple is $0\bmod8$.
\end{prop}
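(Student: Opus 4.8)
The plan is to read off the statement from the structural results already in place, checking four things in turn: that $(A,H,D,\gamma)$ is an even spectral triple, that $J$ is an antilinear isometry satisfying the twisted axioms \eqref{eq:J} with the stated twist, that the Hodge condition \eqref{eq:Hodge} holds, and that the resulting signs pin down the KO-dimension. For the spectral-triple structure I would note that $D=-i(\dd-\dd^*)$ is self-adjoint with $D^2=\dd\dd^*+\dd^*\dd=(\dd+\dd^*)^2$, the Hodge Laplacian, so $D$ inherits compact resolvent from Prop.~\ref{prop:HdR}; the commutators are bounded since $i[D,f]=\lambda(\dd f)$ by \eqref{eq:DfHodge}. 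The grading $\gamma=(-1)^k$ is self-adjoint, squares to $1$, commutes with $A$ (functions preserve degree) and anticommutes with $D$ because it anticommutes with both $\dd$ and $\dd^*$; hence $(A,H,D,\gamma)$ is even.

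Next I would verify the twisted real structure. From Lemma~\ref{lemma:21} and the remarks following it we have $J\dd J^{-1}=\dd\gamma$, $J\dd^*J^{-1}=-\dd^*\gamma$, $J^2=1$ (giving $\varepsilon=+1$), and, since $J$ preserves degree, $J\gamma=\gamma J$ (giving $\varepsilon''=+1$). The operators $\tau$ and $\gamma$ act as real scalars on each homogeneous component, so they commute with $J$ and with $A$ and satisfy $\tau=\tau^*=\tau^{-1}$. The only substantial axiom is the twisted \eqref{eq:twistedJ}, which I would check in the equivalent form $\tau JD\tau^{-1}=\varepsilon'DJ$. A degree-by-degree computation of the signs $(-1)^{k(k+1)/2}$ yields $\tau D\tau^{-1}=i(\dd+\dd^*)\gamma$; then, using that $\tau$ commutes with $J$, that $J$ is antilinear, that $J(\dd+\dd^*)J^{-1}=(\dd-\dd^*)\gamma$, and that $J\gamma=\gamma J$, one obtains $\tau JD\tau^{-1}=-iJ(\dd+\dd^*)\gamma=-i(\dd-\dd^*)J=DJ$, so $\varepsilon'=+1$. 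I expect this sign bookkeeping to be the main obstacle: $J$ only intertwines the two Dirac operators up to $\gamma$ and a factor of $i$, so $J$ fails to (anti)commute with $D$, and the content is precisely that $\tau=(-1)^{k(k+1)/2}$ is the unique diagonal correction converting this failure into the clean twisted relation.

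For the Hodge condition I would take $E=\Gamma(\bigwedge^\bullet_{\C}T^*M)$, which is dense in $H$. By Prop.~\ref{prop:CLDAhodge} the left action of $\Cl_D(A)=\Gamma(\Cl_{\C}(M,g))$ on $E$ is $\lambda$, and $E$ is a self-Morita equivalence $\Gamma(\Cl_{\C}(M,g))$-bimodule with right action $\rho$, so that $\lambda(\Gamma(\Cl_{\C}(M,g)))=\mathcal{K}_{\rho(\Gamma(\Cl_{\C}(M,g)))}(E)$. The right action implemented by $J$ through \eqref{eq:rightaction} sends $\xi=\lambda(\zeta)$ to $\xi^\circ=J\lambda(\zeta)^*J^{-1}=J\lambda(\zeta^*)J^{-1}=\rho(\zeta^*)$, using the identity $\lambda(\zeta)^*=\lambda(\zeta^*)$ and Lemma~\ref{lemma:21}(i). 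Since $\zeta\mapsto\zeta^*$ is a bijection of $\Gamma(\Cl_{\C}(M,g))$, this $\circ$-action has image exactly $\rho(\Gamma(\Cl_{\C}(M,g)))$, so the compact endomorphisms for the $\circ$-module structure coincide with those for $\rho$, namely $\mathcal{K}_{\Cl_D(A)}(E)=\lambda(\Gamma(\Cl_{\C}(M,g)))=\Cl_D(A)$. Thus $(E,\id)$ is a $\Cl_D(A)$-$\Cl_D(A)$ imprimitivity bimodule, i.e.\ $\Cl_D(A)\sim_J\Cl_D(A)$, establishing \eqref{eq:Hodge}.

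Finally, I would close the loop on the remaining axioms. Since \eqref{eq:Hodge} implies \eqref{eq:2ndorder}, and \eqref{eq:2ndorder} (the full system \eqref{eq:commutant}) contains \eqref{eq:realST}, the reality \eqref{eq:commutanta} and first-order \eqref{eq:commutantb} conditions hold automatically; together with \eqref{eq:Ja}, \eqref{eq:twistedJ} and \eqref{eq:Jc} this shows $J$ is a genuine $\tau$-twisted real structure. With $(\varepsilon,\varepsilon',\varepsilon'')=(+,+,+)$, Table~\ref{tab} assigns KO-dimension $0\bmod 8$, completing the proof.
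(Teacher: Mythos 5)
Your proof is correct, and structurally it matches the paper's: the Hodge condition is deduced from Prop.~\ref{prop:CLDAhodge} and Lemma~\ref{lemma:21} (the paper compresses this to ``follows from the discussion above''; you usefully spell out that $\lambda(\zeta)^\circ=J\lambda(\zeta)^*J^{-1}=\rho(\zeta^*)$, so the $\circ$-action has image exactly $\rho(\Gamma(\Cl_{\C}(M,g)))$), and the signs $\varepsilon=\varepsilon''=+1$ are read off from $J^2=1$ and degree preservation exactly as in the paper. Where you genuinely differ is the verification of \eqref{eq:twistedJ}. The paper works degree-by-degree through the intermediate identity $JDJ^{-1}\omega=(-1)^{k+1}D\omega$ and then inserts $\tau$; as literally written that identity is problematic, since $D\omega$ has components in degrees $k\pm1$ on which $J$ (and later $\tau$) act with \emph{opposite} signs, and indeed it conflicts with the paper's own earlier observation $J(\dd+\dd^*)J^{-1}=iD\gamma$, which says $JDJ^{-1}=i(\dd+\dd^*)\gamma$ is \emph{not} proportional to $D$ (the two sign slips compensate, so the paper's final conclusion $\tau JD=DJ\tau$ is nevertheless correct). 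Your route --- establishing $\tau D\tau^{-1}=i(\dd+\dd^*)\gamma$ degree-by-degree and then applying the intertwining relations $J\dd J^{-1}=\dd\gamma$, $J\dd^*J^{-1}=-\dd^*\gamma$, $J\gamma=\gamma J$ of Lemma~\ref{lemma:21} --- keeps the two Dirac operators $-i(\dd-\dd^*)$ and $\dd+\dd^*$ distinct throughout, and every step checks out: $\tau JD\tau^{-1}=J\tau D\tau^{-1}=-iJ(\dd+\dd^*)\gamma=-i(\dd-\dd^*)J=DJ$, which is equivalent to \eqref{eq:twistedJ} with $\varepsilon'=+1$. So your argument is not only valid but is the cleaner piece of bookkeeping of the two, and your closing logic (Hodge $\Rightarrow$ \eqref{eq:2ndorder} $\Rightarrow$ the zeroth- and first-order conditions, then Table~\ref{tab}) is exactly the paper's. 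The one step where you operate at the same informal level as the paper is the identification $\mathcal{K}_{\Cl_D(A)}(E)=\Cl_D(A)$ for the $\circ$-module structure: strictly speaking one should note that the right Hilbert-module structure, including the $\Cl_D(A)$-valued inner product, transports through the bijection $\zeta\mapsto\zeta^*$; the paper does not address this either, so it is not a gap relative to the standard being compared against.
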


\begin{proof}
The statement about the Hodge condition follows from the discussion above. Clearly $J^2=1$, so that (\ref{eq:Ja}) is satisfied with sign $\varepsilon=+1$. Since $J$ doesn't change the degree of a form, (\ref{eq:Jc}) is also satisfied with sign $\varepsilon''=+1$. Finally, $D$ anticommutes with $C_1$ (since $\dd\overline{\omega}=\overline{\dd\omega}$ for all forms $\omega$). If $\omega$ has degree $k$, then
$$
JDJ^{-1}\omega=(-1)^{\frac{k(k-1)}{2}}JDC_1\omega=(-1)^{\frac{k(k-1)}{2}+\frac{k(k+1)}{2}}C_1DC_1\omega=(-1)^{k+1}D\omega ,
$$
where we used the fact that $k^2$ and $k$ have the same parity. It follows that
\begin{align*}
\tau JD\omega=(-1)^{\frac{(k+1)(k+2)}{2}}JD\omega&=(-1)^{\frac{(k+1)(k+2)}{2}+(k+1)}DJ\omega \\
&=(-1)^{2(k+1)^2}DJ\tau\omega=DJ\tau\omega .
\end{align*}
Thus, \eqref{eq:twistedJ} is satisfied with sign $\varepsilon'=+1$.
\end{proof}

It is worth mentioning that, although in the reconstruction theorem (\emph{cf.}\ \cite{Con13}) the role of $J$ is to pass from spin$^c$ structures to spin, the presence of a reality operator is crucial even for spectral triples that are not built from Dirac spinors. In the noncommutative case, it is necessary to make sense of the condition which makes the Dirac operator a first-order differential operator, and in the noncommutative approach to gauge theories, it is required in order to define the adjoint action of the gauge group, the real part of the spectral triple, and more. Furthermore, differential forms can be constructed from spinors by the use of twisted modules \cite[\S2.5]{Var06}. It is therefore not unreasonable to investigate real structures even in the absence of (explicit) spinors.

The interest in self-Morita equivalences of a Clifford algebra (which we note are implemented by $J$) is mainly motivated by the fact that this is what happens for the finite-dimensional part of the spectral triple of the Standard Model \cite{DDS17,DS19}. It is an interesting observation, coming from $SU(5)$ GUTs, that the Hilbert space of such a spectral triple (when considering only one generation of particles) is isomorphic to the exterior algebra $\bigwedge^5\C$ (with the representation of the gauge group the restriction of the natural representation of $SU(5)$ on such a space). It is tempting then to speculate that the Hilbert space of the spectral triple of the Standard Model might have as much to do with differential forms as Dirac spinors, though any further investigation lies outside the scope of this paper.

\section{The Hodge-de Rham operator on the torus}\label{sec:torus}
In this section we will specialise the discussion to the Hodge-de Rham spectral triple on the $2$-torus $\T^2\coloneqq \R^2/\Z^2$. We use this example to argue that, when considering the Hodge-de Rham spectral triple of a closed oriented Riemannian manifold, the second-order condition \eqref{eq:2ndorder} is incompatible with (\ref{eq:Jb}) and, if one wants to enforce \eqref{eq:2ndorder}, then (\ref{eq:Jb}) must be replaced by \eqref{eq:twistedJ}.

We think of functions/forms on $\T^2$ as $\Z^2$-invariant functions/forms on $\R^2$ and use Pauli matrices denoted by:
$$
\sigma_1\coloneqq \bigg(\!\begin{array}{rr} 0 & \,1 \\ 1 & 0\end{array}\!\bigg),\qquad
\sigma_2\coloneqq \bigg(\!\begin{array}{rr} 0 & \!-i \\ i & 0\end{array}\!\bigg),\qquad
\sigma_3\coloneqq \bigg(\!\begin{array}{rr} 1 & 0 \\ 0 & \!-1\end{array}\!\bigg),
$$
and denoting by $\sigma_0$ the $2\times2$ identity matrix. Consider the isomorphism of complex vector spaces from complex differential forms on $\T^2$ to $2\times 2$ matrices of complex functions given by
\begin{equation}\label{eq:T}
\begin{gathered}
T\colon\Omega^\bullet_{\C}(\T^2) \to M_2(C^\infty(\T^2)),\\
T\colon f_0+f_1dx +f_2dy+f_3dx\wedge dy \mapsto\sigma_0f_0+i\sigma_1f_1+i\sigma_2f_2-i\sigma_3f_3
\end{gathered}
\end{equation}
for $f_0,\ldots, f_3\in C^\infty(\T^2)$.
Under this isomorphism,
the natural inner product of forms (associated to the flat metric on $\T^2$)
becomes the natural inner product on matrices of functions
$$
\inner{a,b}=\frac{1}{2}\int_0^1dx\int_0^1dy\,\tr(a^*b) ,
$$
and the Hilbert space completion is $H\coloneqq M_2(L^2(\T^2))$. 
The Hodge-de Rham operator is mapped to the operator
$$
D\coloneqq T\circ (\dd+\dd^*)\circ T^{-1}=iL_{\sigma_1}\frac{\partial}{\partial x}+iL_{\sigma_2}\frac{\partial}{\partial y},
$$
where $L$ and $R$ denote respectively left and right pointwise matrix multiplication:
$$
L_ab\coloneqq ab\qquad\text{and}\qquad R_ab\coloneqq ba,
$$
for all $a\in M_2(L^\infty(\T^2))$ and $b\in M_2(L^2(\T^2))$.
If $f$ is a scalar function, we identify $f$ with $f\sigma_0$ and $L_f=R_f$ will be denoted simply by $f$. 
The natural grading given by the degree of forms is transformed by $T$ into the operator $\gamma$ on $H$ given by
$$
\gamma a\coloneqq \sigma_3a\sigma_3 ,
$$
for all $a\in H$. Lastly, we set $A\coloneqq C^\infty(\T^2)$.

\begin{rem}
With the notation above, $(A,H,D,\gamma)$ is a spectral triple unitarily equivalent to the Hodge-de Rham spectral triple of $\T^2$. The equivalence is given by the $L^2$-closure of the map $T$ in \eqref{eq:T}.
\end{rem}

For all $f$ we have
$$
[D,f]=L\circ T(\dd f),
$$
so that under the isomorphism $T$, Clifford multiplication becomes left matrix multiplication.
One easily computes $\Cl_D(A)$, which is given by the full matrix algebra $M_2(C(\T^2))$ acting via left multiplication on $H$. Indeed, let us denote by
\begin{equation}\label{eq:generators}
u(x,y)\coloneqq e^{ix}
\qquad\text{and}\qquad
v(x,y)\coloneqq e^{iy}
\end{equation}
the unitary generators of $A$. Since
$$
-u^*[D,u]=L_{\sigma_1} ,\qquad
-v^*[D,v]=L_{\sigma_2} ,
$$
the elements $\sigma_1$ and $\sigma_2$ belong to $\Cl_D(A)$, and as is well-known  they generate $M_2(\R)$ as an algebra. Elements in the norm-closure of $A$ 
belong to $Cl_D(A)$ as well, which therefore contains the algebra generated by $C(\T^2)$ and $M_2(\R)$, \emph{i.e.},\ all of $M_2(C(\T^2))$. 
We therefore have the following results.
\begin{lemma}\label{lemma:7}
$\Cl_D(A)=M_2(C(\T^2))$, which acts on $H$ by left matrix multiplication.
\end{lemma}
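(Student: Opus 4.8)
The plan is to establish the two inclusions $\Cl_D(A)\subseteq M_2(C(\T^2))$ and $M_2(C(\T^2))\subseteq\Cl_D(A)$ separately, where throughout $M_2(C(\T^2))$ is understood as acting on $H$ by left matrix multiplication. Most of the work has in fact been done in the computation preceding the statement, so the task is mainly to organise it into these two inclusions. For the easy direction I would check that every generator of $\Cl_D(A)$ already lies in $M_2(C(\T^2))$. By definition $\Cl_D(A)$ is the $C^*$-algebra generated by $A$ and $\Omega^1_D(A)$; a scalar function $f\in A=C^\infty(\T^2)$ acts as $L_f=R_f$, i.e.\ as left multiplication by $f\sigma_0\in M_2(C(\T^2))$, while the identity $[D,f]=L\circ T(\dd f)$ recorded above shows that each $a[D,b]$ with $a,b\in A$ is left multiplication by the matrix-valued function $a\,T(\dd b)\in M_2(C^\infty(\T^2))$. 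Since $M_2(C(\T^2))$ is norm-closed in $\mathcal{B}(H)$, the $C^*$-algebra generated by these elements is contained in it.

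For the reverse inclusion I would follow the explicit computation with the unitary generators $u,v$ of \eqref{eq:generators}. The relations $-u^*[D,u]=L_{\sigma_1}$ and $-v^*[D,v]=L_{\sigma_2}$ exhibit $\sigma_1$ and $\sigma_2$ as elements of $\Cl_D(A)$. Because $\sigma_1,\sigma_2$ are anticommuting self-adjoint involutions, the real $*$-algebra they generate is a copy of $M_2(\R)$; since $\Cl_D(A)$ is a complex algebra and $A$ contains the constant function $i$, complexifying produces all constant matrices $M_2(\C)\subset\Cl_D(A)$ (concretely, $L_{\sigma_1}L_{\sigma_2}=L_{i\sigma_3}$ together with multiplication by $i$ gives $\sigma_3$, so $\sigma_0,\sigma_1,\sigma_2,\sigma_3$, and hence their complex span $M_2(\C)$, all lie in $\Cl_D(A)$). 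Finally the norm-closure of $A$ is $C(\T^2)$, so $\Cl_D(A)$ contains every continuous scalar function as well.

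It then remains to combine these two families, and this is the only step requiring a little care: I must argue that the $C^*$-algebra generated by the constant matrices $M_2(\C)$ and the scalar functions $C(\T^2)$ is the \emph{full} $M_2(C(\T^2))$, rather than some proper subalgebra. This follows because products $f\cdot m$ with $f\in C(\T^2)$ and $m\in M_2(\C)$ realise, entrywise, arbitrary continuous functions, so their norm-closed linear span is $C(\T^2)\otimes M_2(\C)\cong M_2(C(\T^2))$ (the tensor-product norm being unique since the matrix factor is finite-dimensional). This yields $M_2(C(\T^2))\subseteq\Cl_D(A)$ and, with the first inclusion, closes the proof. I do not expect any genuine obstacle here — every step is a direct computation or a standard tensor-product fact — the one point to watch being the bookkeeping of real versus complex spans when passing from $\sigma_1,\sigma_2$ to all constant matrices.
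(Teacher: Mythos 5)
Your proof is correct and follows essentially the same route as the paper, whose argument appears in the text immediately preceding the lemma: the identity $[D,f]=L\circ T(\dd f)$ gives the inclusion of $\Cl_D(A)$ into left multiplication by $M_2(C(\T^2))$, while the relations $-u^*[D,u]=L_{\sigma_1}$, $-v^*[D,v]=L_{\sigma_2}$ together with the norm-closure of $A$ give the reverse inclusion. Your version merely spells out both inclusions separately and is slightly more careful than the paper about the real-versus-complex bookkeeping (passing from the copy of $M_2(\R)$ generated by $\sigma_1,\sigma_2$ to all of $M_2(\C)$ via $\sigma_1\sigma_2=i\sigma_3$), which is a welcome clarification but not a different argument.
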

\begin{prop}\label{prop:8}
$\Cl_D(A)'=M_2(L^\infty(\T^2))$, which acts on $H$ by right matrix multiplication.
\end{prop}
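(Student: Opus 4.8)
The plan is to prove the two inclusions separately, using Lemma~\ref{lemma:7}, which identifies $\Cl_D(A)$ with $M_2(C(\T^2))$ acting by left multiplication $L$ on $H=M_2(L^2(\T^2))$, together with the fact that $M_2(C(\T^2))$ is dense in $H$. The inclusion $M_2(L^\infty(\T^2))\subset\Cl_D(A)'$, with $M_2(L^\infty(\T^2))$ acting by right multiplication, is the easy direction and follows from associativity: for $b\in M_2(L^\infty(\T^2))$ the operator $R_b$ is bounded (estimating the pointwise Hilbert--Schmidt norm of $c(x,y)b(x,y)$ by $\|b(x,y)\|$ gives $\|cb\|\leq\|b\|_\infty\|c\|$), and $R_bL_a=L_aR_b$ since $(ac)b=a(cb)$. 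Hence every right multiplication lies in the commutant.

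For the reverse inclusion I would take an arbitrary $\xi\in\Cl_D(A)'$ and set $b:=\xi(\sigma_0)$, where $\sigma_0$ is the constant identity matrix regarded as an element of $H$. For any $a\in M_2(C(\T^2))$ one has $a=L_a\sigma_0$, so commutation with $L_a$ yields
\[
\xi(a)=\xi(L_a\sigma_0)=L_a\xi(\sigma_0)=ab=R_b(a).
\]
Thus $\xi$ coincides with right multiplication by $b$ on the dense subspace $M_2(C(\T^2))$. Once one knows $b\in M_2(L^\infty(\T^2))$, the operator $R_b$ is bounded, and $\xi=R_b$ on all of $H$ by density and continuity, which closes the argument and exhibits $\xi$ as the asserted right multiplication.

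The step requiring genuine care --- and the main obstacle --- is the essential boundedness of $b$, which a priori only belongs to $M_2(L^2(\T^2))$. I would extract it by testing against scalar functions: for every $f\in C(\T^2)$ we have $\xi(f\sigma_0)=fb$, and boundedness of $\xi$ gives $\tfrac12\int_{\T^2}|f|^2\,\tr(b^*b)\leq\|\xi\|^2\int_{\T^2}|f|^2$. Since $f$ is arbitrary and $\tr(b^*b)=\|b\|_{\mathrm{HS}}^2\in L^1(\T^2)$, approximating indicator functions of measurable sets by continuous ones forces $\tr(b^*b)\leq 2\|\xi\|^2$ almost everywhere; as each matrix entry is dominated by the Hilbert--Schmidt norm, this shows $b\in M_2(L^\infty(\T^2))$. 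As an alternative to this measure-theoretic argument, one can identify $H\cong L^2(\T^2)\otimes M_2(\C)$, note that the weak closure $\Cl_D(A)''$ is $L^\infty(\T^2)\mathbin{\overline{\otimes}}M_2(\C)$ with $M_2(\C)$ in its left regular representation, and invoke the commutation theorem $(M\mathbin{\overline{\otimes}}N)'=M'\mathbin{\overline{\otimes}}N'$ together with the facts that $L^\infty(\T^2)$ is maximal abelian and that left and right multiplication on $M_2(\C)$ are each other's commutants.
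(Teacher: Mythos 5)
Your proposal is correct, and your primary argument takes a genuinely different route from the paper's. The paper proves the proposition by identifying $H\cong L^2(\T^2)\otimes M_2(\R)$ and $\Cl_D(A)''$ with the von Neumann tensor product $L^\infty(\T^2)\otimes M_2(\R)$, then invoking the Rieffel--van Daele commutation theorem for tensor products of von Neumann algebras together with the facts that $L^\infty(\T^2)$ is its own commutant and that left and right matrix multiplication are each other's commutants --- this is exactly the ``alternative'' you sketch in your closing sentences. Your main proof instead exploits the fact that $\sigma_0$ is a cyclic vector for $\Cl_D(A)$ acting on $H$: commutation with all $L_a$ forces an arbitrary $\xi\in\Cl_D(A)'$ to agree with $R_b$, $b=\xi(\sigma_0)$, on the dense subspace $M_2(C(\T^2))$, and the essential boundedness of $b$ is extracted by testing the norm bound $\|\xi(f\sigma_0)\|\le\|\xi\|\,\|f\sigma_0\|$ against scalar functions and approximating indicators of measurable sets. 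All the steps check out: the pointwise estimate $\|c(p)b(p)\|_{\mathrm{HS}}\le\|b(p)\|\,\|c(p)\|_{\mathrm{HS}}$ handles the easy inclusion, the normalisation $\inner{a,b}=\frac12\int\tr(a^*b)$ correctly produces the a.e.\ bound $\tr(b^*b)\le 2\|\xi\|^2$, and domination of matrix entries by the Hilbert--Schmidt norm finishes the boundedness claim. What your approach buys is self-containedness (no von Neumann algebra machinery, only elementary measure theory), an explicit formula for the commutant element, and a quantitative bound $\|b\|_\infty\le\sqrt{2}\,\|\xi\|$; it is in essence the concrete standard-form argument that for a finite von Neumann algebra with a trace vector the commutant of left multiplication is right multiplication. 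What the paper's route buys is brevity and structural clarity, reducing the statement to two standard commutant computations at the price of citing the commutation theorem.
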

\begin{proof}
Up to a natural identification, $H=L^2(\T^2)\otimes M_2(\R)$ and $\Cl_D(A)=C(\T^2)\otimes M_2(\R)$ where $M_2(\R)$ acts by left matrix multiplication. Continuous functions on $\T^2$ are dense (in the strong operator topology) in the von Neumann algebra $L^\infty(\T^2)$ of essentially bounded measurable functions (with respect to the Lebesgue measure), and $L^\infty(\T^2)$ is its own commutant.
Using the commutation theorem for tensor products of von Neumann algebras \cite{RvD75} one finds
$\Cl_D(A)'=(L^\infty(\T^2)\otimes M_2(\R))'=L^\infty(\T^2)'\otimes M_2(\R)'$ and the thesis follows.
\end{proof}

Recall \eqref{eq:C1} and \eqref{eq:C2}, the definitions of the 
the antilinear maps $C_1$ and $C_2$ on forms, the former multiplicative and the latter antimultiplicative, with ${C_1(\omega)=\overline{\omega}}$ the pointwise complex conjugate of a form and $C_2(\omega)=(-1)^{k(k-1)/2}\overline{\omega}$ on forms of degree $k$. 
On $H$ two corresponding antilinear isometries are given by $J_i\coloneqq \gamma TC_iT^{-1}$, ${i=1,2}$ (where $\gamma$ is included to simplify the expressions). One easily checks that, for all $a,b,c,d\in L^2(\T^2)$
\begin{equation}\label{eq:torusJ}
J_1\begin{pmatrix}a & b \\ c & d\end{pmatrix}=\begin{pmatrix}\overline{d} & \overline{c} \\ \overline{b} & \overline{a}\end{pmatrix}
,\qquad
J_2\begin{pmatrix}a & b \\ c & d\end{pmatrix}=\begin{pmatrix}\overline{a} & \overline{c} \\ \overline{b} & \overline{d}\end{pmatrix}
.
\end{equation}
Note that $J_2$ is just matrix Hermitian conjugation.
It is useful in the computations to recognise that
$J_1=L_{\sigma_1}R_{\sigma_1}J_0$, where $J_0$ is entrywise pointwise complex conjugation,
\begin{equation}\label{eq:torusCC}
J_0\begin{pmatrix}a & b \\ c & d\end{pmatrix}=\begin{pmatrix} \overline{a} & \overline{b} \\ \overline{c} & \overline{d} \end{pmatrix} .
\end{equation}
In particular, it follows that $J_1$ is multiplicative (since $J_0$ is multiplicative and $\sigma_1^2=1$):
$$
J_1(\alpha\beta)=J_1(\alpha)J_1(\beta)
$$
for all $\alpha\in M_2(L^\infty(\T^2)) $ and $\beta\in M_2(L^2(\T^2))$. By contrast, $J_2$ is antimultiplicative:
$$
J_2(\alpha\beta)=J_2(\beta)J_2(\alpha)
$$
for all $\alpha\in M_2(L^2(\T^2)) $ and $\beta\in M_2(L^\infty(\T^2))$. In particular, for all $m\in M_2(L^\infty(\T^2))$ we have
\begin{equation}\label{eq:LR}
J_0 L_m J_0 = L_{\overline{m}} ,\qquad
J_1 L_m J_1 = L_{J_1(m)} ,\qquad
J_2 L_m J_2 = R_{m^*} ,
\end{equation}
along with the analogous relations with $L$ and $R$ interchanged.

\begin{prop}\label{prop:9}
$(A,H,D,\gamma,J_1)$ is a unital even real spectral triple.
\end{prop}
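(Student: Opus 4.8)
The plan is to verify the three families of axioms in Definition \ref{df:realST} one at a time; everything else is already in place, since the Remark preceding this proposition identifies $(A,H,D,\gamma)$ with the Hodge--de Rham even spectral triple of $\T^2$, and $J_1=\gamma T C_1 T^{-1}$ is manifestly an antilinear isometry (being a composition of the antilinear isometry $C_1$ with the unitary $T$ and the self-adjoint unitary $\gamma$). The two structural facts that make all the computations routine are: first, that $A=C^\infty(\T^2)$ acts by \emph{scalar} multiplication, so that $L_f=R_f$ and $A'$ contains every left and every right matrix multiplication; and second, that by Lemma \ref{lemma:7} the algebra $\Cl_D(A)$, and hence $\Omega^1_D(A)$, acts by \emph{left} matrix multiplication. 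Throughout I would exploit the relations \eqref{eq:LR} together with the factorisation $J_1=L_{\sigma_1}R_{\sigma_1}J_0$ and the elementary conjugation identities $J_1(\sigma_1)=\sigma_1$, $J_1(\sigma_2)=\sigma_2$, $J_1(\sigma_3)=-\sigma_3$, read off directly from \eqref{eq:torusJ}.

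For the reality condition \eqref{eq:commutanta} and the first-order condition \eqref{eq:commutantb} I would argue as follows. Because $J_1$ is multiplicative, \eqref{eq:LR} gives $J_1 L_m J_1^{-1}=L_{J_1(m)}$, so conjugation by $J_1$ sends left multiplications to left multiplications. For $f\in A$ one finds that $f^\circ=J_1\bar f J_1^{-1}=f$ is again scalar multiplication, hence $A^\circ=A\subset A'$. For a one-form $\omega=L_m\in\Omega^1_D(A)$ one gets $\omega^\circ=J_1 L_{m^*}J_1^{-1}=L_{J_1(m^*)}$, again a left multiplication; since every matrix multiplication commutes with scalar multiplication, $\omega^\circ\in A'$, giving \eqref{eq:commutantb}. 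Note that both conditions hold precisely because the left action of $A$ is scalar; it is the finer fact that $\omega^\circ$ stays a \emph{left} multiplication — i.e. inside $\Cl_D(A)$ rather than its commutant — that will later obstruct the second-order condition for $J_1$.

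It remains to check \eqref{eq:J}. A direct application of \eqref{eq:torusJ} gives $J_1^2=1$, so \eqref{eq:Ja} holds with $\varepsilon=+1$. Writing $\gamma=L_{\sigma_3}R_{\sigma_3}$ and using $J_1(\sigma_3)=-\sigma_3$ in \eqref{eq:LR}, the two resulting sign changes cancel, so $J_1\gamma J_1^{-1}=\gamma$ and \eqref{eq:Jc} holds with $\varepsilon''=+1$. The main computation is \eqref{eq:Jb}: using the factorisation of $J_1$, the fact that $J_1$ commutes with the real operators $\partial_x,\partial_y$ and fixes $L_{\sigma_1}$ and $L_{\sigma_2}$ (because $J_1(\sigma_1)=\sigma_1$ and $J_1(\sigma_2)=\sigma_2$), together with the antilinearity of $J_1$, which turns the factor $i$ in $D=iL_{\sigma_1}\partial_x+iL_{\sigma_2}\partial_y$ into $-i$, one obtains $J_1 D J_1^{-1}=-D$, i.e. $\varepsilon'=-1$. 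Since $J_1$ clearly preserves the (Sobolev) domain of $D$, all the conditions of Definition \ref{df:realST} hold, and the signs $(\varepsilon,\varepsilon',\varepsilon'')=(+,-,+)$ place the triple in KO-dimension $0\bmod 8$ according to Table \ref{tab}. The one place to take care is the sign in \eqref{eq:Jb}, where the antilinear flip of $i$ must be combined correctly with the action of $J_1$ by conjugation on the Pauli matrices; this is the only genuinely computational step and the only likely source of a sign error.
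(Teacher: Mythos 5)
Your proof is correct and is exactly the ``straightforward computation'' the paper's proof leaves implicit: reality and the first-order condition follow because $A$ acts by scalar multiplication and conjugation by the multiplicative $J_1$ sends left multiplications to left multiplications, while the conditions \eqref{eq:J} are checked directly from \eqref{eq:torusJ}. One caveat: your sign $\varepsilon'=-1$ contradicts the remark at the end of \S\ref{sec:torus}, which asserts $\varepsilon'=+1$ for Propositions \ref{prop:9} and \ref{prop:10}; your sign is nevertheless the correct one, since on forms $J_1=\gamma\, T C_1 T^{-1}$ corresponds to $\omega\mapsto(-1)^k\overline{\omega}$, and $\dd+\dd^*$ commutes with pointwise conjugation $C_1$ while anticommuting with the degree grading, so $J_1DJ_1^{-1}=-D$. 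Since Table \ref{tab} admits both $(+,+,+)$ and $(+,-,+)$ in KO-dimension $0\bmod 8$ (the two choices of $J$ related by $\gamma$), this discrepancy affects neither the validity of the proposition nor the KO-dimension claim.
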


\begin{proof}
A straightforward computation.
\end{proof}

\begin{prop}\label{prop:10}
$(A,H,D,\gamma,J_2)$ is a unital even spectral triple with $\tau$-twisted real structure, where $\tau=J_1J_2$, or explicitly,
\begin{equation}\label{eq:tauJ2}
\tau\begin{pmatrix} a & b \\ c & d \end{pmatrix}\coloneqq \begin{pmatrix} d & b \\ c & a \end{pmatrix} .
\end{equation}
Furthermore, this spectral triple satisfies the second-order condition and the Hodge condition \eqref{eq:Hodge}.
\end{prop}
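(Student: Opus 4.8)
The plan is to exploit the three explicit computations already available on $\T^2$: $\Cl_D(A)=M_2(C(\T^2))$ acting by left multiplication (Lemma~\ref{lemma:7}), $\Cl_D(A)'=M_2(L^\infty(\T^2))$ acting by right multiplication (Prop.~\ref{prop:8}), and the conjugation rules \eqref{eq:LR}. Under these identifications everything reduces to elementary matrix manipulations.

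First I would settle the twisted real structure. From the explicit matrices \eqref{eq:torusJ} one reads off $J_2^2=1$, so $\varepsilon=+1$, and $J_2\gamma=\gamma J_2$, so $\varepsilon''=+1$; a one-line check on \eqref{eq:tauJ2} shows $\tau=\tau^*=\tau^{-1}$ and that $\tau$ commutes both with $J_2$ and with the scalar multiplications $A$. The crucial remark for \eqref{eq:twistedJ} is that, because $J_2^2=1$ and $J_1,J_2$ commute, one has $\tau J_2=J_2\tau=J_1$. Hence $\tau J_2 D=J_1 D$ and $DJ_2\tau=DJ_1$, so the twisted equation $\tau J_2 D=\varepsilon'DJ_2\tau$ is nothing but the ordinary equation \eqref{eq:Jb}, $J_1 D=\varepsilon'DJ_1$, valid for the genuine real structure $J_1$ of Prop.~\ref{prop:9}; a short computation with \eqref{eq:LR} and antilinearity gives $J_1DJ_1=-D$, so $\varepsilon'=-1$. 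Since $J_1$ preserves $\operatorname{Dom}(D)$, so does $J_2\tau=J_1$, as the definition requires.

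Next I would compute the $J_2$-twisted right action on the Clifford algebra. For $\xi=L_m$ with $m\in M_2(C(\T^2))$ one has $\xi^*=L_{m^*}$, and \eqref{eq:LR} gives $\xi^\circ=J_2L_{m^*}J_2=R_m$. Therefore $\Cl_D(A)^\circ$ is right multiplication by $M_2(C(\T^2))$, which is contained in $\Cl_D(A)'=R_{M_2(L^\infty(\T^2))}$ by Prop.~\ref{prop:8}. This inclusion is exactly the second-order condition \eqref{eq:2ndorder}, and by the equivalence recorded after Def.~\ref{df:realST} it simultaneously yields the remaining (reality and first-order) conditions, finishing the proof that $J_2$ is a $\tau$-twisted real structure.

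For the Hodge condition I would produce the imprimitivity bimodule by hand. Take $E=M_2(C(\T^2))$, dense in $H=M_2(L^2(\T^2))$; on $E$ the left action of $\Cl_D(A)=M_2(C(\T^2))$ is $L$ and the $J_2$-twisted right action is $R$, so $E$ is precisely the unital $C^*$-algebra $M_2(C(\T^2))=\Cl_D(A)$ viewed as the standard self-Morita equivalence bimodule over itself, with $B$-valued inner product $\langle a,b\rangle_B=a^*b$. Since $\mathcal{K}_B(B)=B$ acting by left multiplication for any unital $C^*$-algebra $B$, the identity map realises $\Cl_D(A)=\mathcal{K}_{\Cl_D(A)}(E)$, i.e.\ $\Cl_D(A)\sim_{J_2}\Cl_D(A)$, which is \eqref{eq:Hodge}. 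The step I expect to require the most care is this last one: one must verify that $E$ is full, that the compact (not merely bounded) right-$B$-linear endomorphisms exhaust $\Cl_D(A)$, and that the $B$-valued inner product is compatible with the Hilbert inner product $\tfrac12\int\tr(a^*b)$. These are all instances of the standard fact that a unital $C^*$-algebra is a self-Morita equivalence bimodule over itself, so the difficulty here is organisational rather than conceptual.
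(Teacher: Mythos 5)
Your proof is correct and follows essentially the same route as the paper's: the twisted condition \eqref{eq:twistedJ} for $J_2$ is reduced to the untwisted condition (\ref{eq:Jb}) for $J_1$ via $\tau J_2=J_2\tau=J_1$, the second-order condition follows from conjugation by $J_2$ turning left into right matrix multiplication (\emph{cf.}~\eqref{eq:LR} and Prop.~\ref{prop:8}), and the Hodge condition \eqref{eq:Hodge} is realised on the dense subspace $E=M_2(C(\T^2))=\Cl_D(A)$ viewed as the standard self-Morita equivalence bimodule over itself. One remark: your sign $\varepsilon'=-1$ (from $J_1DJ_1=-D$) is in fact correct even though the text following the proposition asserts $\varepsilon'=+1$ --- the grading factor in $J_1=\gamma TC_1T^{-1}$ anticommutes with $D=T(\dd+\dd^*)T^{-1}$ while $TC_1T^{-1}$ commutes with it --- and since $(\varepsilon,\varepsilon',\varepsilon'')=(+,-,+)$ also has KO-dimension $0\bmod 8$, this discrepancy does not affect the proposition.
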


\begin{proof}
The condition \eqref{eq:2ndorder} follows from the fact that conjugation by $J_2$ transforms left into right matrix multiplication, \emph{cf.}~\eqref{eq:LR}.
Since $J_1$ satisfies (\ref{eq:Jb}), clearly $J_2=\tau J_1=J_1\tau$ satisfies \eqref{eq:twistedJ}. The subspace $E\coloneqq M_2(C(\T^2))=\Cl_D(A)$ is dense in $H$ and a self-Morita equivalence $\Cl_D(A)$-bimodule, with the left/right action of $\Cl_D(A)$ on $E$ given by left/right multiplication, as requested. The map $L_{m}\mapsto J_2L_{m^*}J^{-1}_2=R_m$ transforms the left into the right action and vice versa, \emph{cf.}~\eqref{eq:LR}, so that the self-Morita equivalence is implemented by $J_2$.
\end{proof}

The spectral triples in Propositions \ref{prop:9} and \ref{prop:10} both have signs ${\varepsilon=\varepsilon'=\varepsilon''=+1}$, \emph{i.e.},~KO-dimension $0\bmod8$.
Notice that: (i) $\tau=TC_1C_2T^{-1}$, and $C_1C_2$ is the canonical anti-involution of the Clifford algebra, given on $k$-forms by $(-1)^{k(k-1)/2}$ times the identity; (ii) the spectral triple in Proposition \ref{prop:9} does \emph{not} satisfy the second-order condition,
for example because $J_1L_{\sigma_1}J_1=L_{\sigma_1}$ does not commute with $L_{\sigma_2}$; (iii) the spectral triple in Proposition \ref{prop:10} does \emph{not} satisfy condition (\ref{eq:Jb}), since
$$
-J_2DJ_2=R_{\sigma_1}i\frac{\partial}{\partial x}+R_{\sigma_2}i\frac{\partial}{\partial y}\neq \pm D .
$$

The next theorem shows that (\ref{eq:Jb}) and \eqref{eq:2ndorder} are incompatible, so that if one wants the second-order condition to be satisfied, one is forced to introduce a twist.

\begin{thm}\label{thm:main}
The spectral triple $(A,H,D)$ admits no antilinear isometry $J$ satisfying both \textup{(\ref{eq:Jb})} and \eqref{eq:2ndorder}.
\end{thm}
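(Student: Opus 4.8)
The plan is to argue by contradiction. Suppose $J$ is an antiunitary satisfying both \eqref{eq:Jb} and \eqref{eq:2ndorder}; note that the very definition $\xi^\circ=J\xi^*J^{-1}$ used in \eqref{eq:2ndorder} presupposes that $J$ is invertible. By Lemma~\ref{lemma:7} and Proposition~\ref{prop:8}, $\Cl_D(A)$ is the algebra of left multiplications $L_m$ with $m\in M_2(C(\T^2))$, while its commutant $\Cl_D(A)'$ is the algebra of right multiplications $R_a$ with $a\in M_2(L^\infty(\T^2))$. The second-order condition \eqref{eq:2ndorder} then says precisely that $J$ conjugates left multiplications into right multiplications: applying the condition to $L_{m^*}$ gives $JL_mJ^{-1}\in\Cl_D(A)'$ for every $m\in M_2(C(\T^2))$. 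In particular, for $f\in A$ we may write $JfJ^{-1}=R_a$ with $a\in M_2(L^\infty(\T^2))$, and since $[D,f]=L\circ T(\dd f)$ is itself a left multiplication lying in $\Cl_D(A)$, the operator $J[D,f]J^{-1}$ is again a right multiplication.

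Next I would bring in \eqref{eq:Jb}. From $JD=\varepsilon'DJ$ we get $JDJ^{-1}=\varepsilon'D$, whence
$$
J[D,f]J^{-1}=\bigl[JDJ^{-1},JfJ^{-1}\bigr]=\varepsilon'[D,R_a].
$$
Combined with the previous paragraph, $[D,R_a]$ must lie in $\Cl_D(A)'$, i.e.\ it must commute with every left multiplication. This is the crux, and the key step is to show that this is impossible unless $a$ is constant. Writing $D=iL_{\sigma_1}\partial_x+iL_{\sigma_2}\partial_y$ and using $[\sigma_1,\sigma_2]=2i\sigma_3$, one computes $[L_{\sigma_1},D]=-2L_{\sigma_3}\partial_y$ and $[L_{\sigma_2},D]=2L_{\sigma_3}\partial_x$. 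Since $[D,R_a]$ commutes with $L_{\sigma_1}$ and $L_{\sigma_1}$ commutes with $R_a$, the Jacobi identity yields
$$
0=\bigl[L_{\sigma_1},[D,R_a]\bigr]=\bigl[[L_{\sigma_1},D],R_a\bigr]=-2L_{\sigma_3}[\partial_y,R_a],
$$
and as $L_{\sigma_3}$ is invertible this forces $[\partial_y,R_a]=0$; the analogous computation with $L_{\sigma_2}$ gives $[\partial_x,R_a]=0$. Hence $R_a$ commutes with both $\partial_x$ and $\partial_y$, so $a$ is almost everywhere constant, i.e.\ $a\in M_2(\C)$. Routing the argument through the Jacobi identity, rather than differentiating $a$ directly, sidesteps the minor regularity question of whether $R_a$ a priori preserves $\operatorname{Dom}(D)$.

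Finally, the contradiction: the above shows $JfJ^{-1}\in M_2(\C)$ (a constant right multiplication) for every $f\in A$. But $f\mapsto JfJ^{-1}$ is injective, since $J$ is invertible, so this would embed the infinite-dimensional algebra $A=C^\infty(\T^2)$ into the four-dimensional space $M_2(\C)$, which is impossible. Therefore no such $J$ exists. The main obstacle is the middle step — controlling the mixed left–right operator $[D,R_a]$ and ruling out non-constant $a$ — but once the Pauli-matrix commutators $[L_{\sigma_i},D]$ are computed, it reduces to the short Jacobi-identity calculation above.
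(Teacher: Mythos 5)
Your proof is correct and follows essentially the same strategy as the paper's: the second-order condition \eqref{eq:2ndorder} turns conjugation by $J$ into a map from left multiplications $\Cl_D(A)$ to right multiplications, condition \textup{(\ref{eq:Jb})} makes $[D,R_a]=\varepsilon'J[D,f]J^{-1}$ a right multiplication commuting with $L_{\sigma_1}$ and $L_{\sigma_2}$, and the resulting commutator computation forces $a$ to be constant. The only cosmetic differences are that you organize that computation via the Jacobi identity and the explicit commutators $[L_{\sigma_i},D]$ (where the paper first establishes Sobolev regularity of $\phi(u)$ and expands it in the Pauli basis), and that you conclude with a dimension count on the injective map $f\mapsto JfJ^{-1}$ (where the paper derives $[D,L_{u^*}]=0$ for the unitary generator $u$, contradicting $-u^*[D,u]=L_{\sigma_1}\neq 0$).
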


\begin{proof}
Assume that both (\ref{eq:Jb}) and \eqref{eq:2ndorder} are satisfied. Since conjugation by $J$ maps $a\in\Cl_D(A)$ into its commutant, it follows from \eqref{eq:2ndorder} that
\begin{equation}\label{eq:implicitly}
JL_{a^*}J^{-1}=R_{\phi(a)}
\end{equation}
for some $\phi(a)\in M_2(L^\infty(\T^2))$. This defines a $*$-homomorphism
$$
\phi\colon M_2(C(\T^2))\to M_2(L^\infty(\T^2)) .
$$
If $f\in C^\infty(\T^2)$ is a smooth scalar function,
it follows from (\ref{eq:Jb}) that, since both $J$ and $L_{f^*}$ preserve the domain of $D$, $R_{\phi(f)}$ preserves the domain of $D$ as well and
\begin{equation}\label{eq:DRL}
[D,R_{\phi(f)}]=\varepsilon' J[D,L_{f^*}]J^{-1}
\end{equation}
extends to a bounded operator on $H$. Moreover, if we apply $R_{\phi(f)}$ to $1\in H$ we deduce that $R_{\phi(f)}(1)=\phi(f)$ is in the domain of self-adjointness of $D$, \emph{i.e.},~a matrix of functions in the Sobolev space $W^{1,2}(\T^2)$.
Now let $u$ be the unitary element in \eqref{eq:generators} and write
$$
\phi(u)=f_0\sigma_0+f_1\sigma_1+f_2\sigma_2+f_3\sigma_3
$$
for some $f_0,\ldots,f_3\in W^{1,2}(\T^2)$. Then
\begin{align*}
0 \stackrel{\text{(\ref{eq:Jb})}}{=}\big[L_{\sigma_1},J[D,L_{u^*}]J^{-1}\big]&=
\varepsilon'\big[L_{\sigma_1},[D,R_{\phi(u)}]\big] \\
&=\varepsilon'\sum_{k=0}^3\big[L_{\sigma_1},[D,f_k]\big]R_{\sigma_k}=
-\varepsilon'\sum_{k=0}^3L_{\sigma_3}\frac{\partial f_k}{\partial y}R_{\sigma_k} .
\end{align*}
From the independence of the linear maps $R_{\sigma_k}$ we get $\partial_yf_k=0$ for all $k$ (where the derivative is in the sense of distributions), and in a similar way one proves that $\partial_x f_k=0$ for all $k$. Thus, $\phi(u)$ is a constant matrix and
$$
L_{\sigma_1}=[D,L_{u^*}]=\varepsilon' [D,R_{\phi(u)}]J^{-1}=0 ,
$$
which is a contradiction.
\end{proof}

One may wonder how unique the example in \eqref{prop:9} is, and how unique a $J$ satisfying the second-order condition is. A partial answer is provided by the following proposition.

\begin{prop}\label{prop:12}
Let $U\in M_2(L^\infty(\T^2))$ be a unitary operator and let $\tau$ be given by \eqref{eq:tauJ2}. Then:
\begin{itemize}
\item[(i)] The operator
$$
J_U\coloneqq L_{U}R_{U}J_2
$$
is an antilinear isometry satisfying $J_U^2=1$ and the second-order condition.

\item[(ii)] If $\tau(U)=U^*$ almost everywhere, then $J_U$ satisfies \eqref{eq:twistedJ} with $\varepsilon'=+1$ and twist given by
$$
\tau_U\coloneqq L_{U}R_{U}\tau .
$$
Moreover, in such a case $J_U$ and $\tau_U$ commute, and $\tau_U^2=1$.

\item[(iii)] If $\tau(U)=U^*$ almost everywhere, then $J_U$ also commutes with $\gamma$.

\end{itemize}
Thus if $\tau(U)=U^*$, the data $(A,D,H,\gamma,J_U)$ form a unital even real spectral triple with $\tau_U$-twisted real structure.
\end{prop}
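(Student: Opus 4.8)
The plan is to reduce every assertion to the already-established properties of the untwisted real structure $J_1$ (Prop.~\ref{prop:9}), using the commutation relations \eqref{eq:LR} and the factorisations $J_2=\tau J_1=J_1\tau$. For (i), since $U$ is unitary, $L_U$ and $R_U$ are unitaries on $H$ while $J_2$ is an antilinear isometry, so $J_U=L_UR_UJ_2$ is an antilinear isometry. To get $J_U^2=1$ I would push $J_2$ rightwards through $L_UR_U$ via $J_2L_UJ_2=R_{U^*}$ and $J_2R_UJ_2=L_{U^*}$ from \eqref{eq:LR}, after which $L_UL_{U^*}=R_UR_{U^*}=\operatorname{Id}$ finishes the computation. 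For the second-order condition, since $\Cl_D(A)=M_2(C(\T^2))$ acts by left multiplication (Lemma~\ref{lemma:7}), it suffices to conjugate a left multiplication: the same relations give $J_UL_{m^*}J_U^{-1}=R_{U^*mU}$, a right multiplication, hence an element of $\Cl_D(A)'=M_2(L^\infty(\T^2))$ (Prop.~\ref{prop:8}); thus $\Cl_D(A)^\circ\subset\Cl_D(A)'$, which is \eqref{eq:2ndorder}.

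For (ii) the key observation is that $J_U=\tau_UJ_1$, because $\tau_U=L_UR_U\tau$ and $J_2=\tau J_1$. From $\tau(U)=U^*$ I would extract two facts. First, $\tau_U^2=1$: conjugating $L_U,R_U$ through $\tau$ (which obeys $\tau L_m\tau=R_{\tau(m)}$ and $\tau R_m\tau=L_{\tau(m)}$, itself a consequence of \eqref{eq:LR}) yields $\tau_U^2=L_{U\tau(U)}R_{\tau(U)U}$, and $U\tau(U)=\tau(U)U=UU^*=1$. Second, $J_1(U)=U$: indeed $J_1=\tau J_2$ gives $J_1(U)=\tau(U^*)=U$, the last step by applying $\tau$ to $\tau(U)=U^*$. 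The identity $J_1(U)=U$ makes $\tau_U$ commute with $J_1$, since conjugating $\tau_U$ by $J_1$ only replaces $U$ by $J_1(U)=U$. Combining these, using that $\tau_U$ commutes with $J_1$ one has $\tau_UJ_U=J_U\tau_U=\tau_U^2J_1=J_1$, which simultaneously proves that $J_U$ and $\tau_U$ commute and, via $J_1D=DJ_1$ (Prop.~\ref{prop:9}), the twisted identity $\tau_UJ_UD=J_1D=DJ_1=DJ_U\tau_U$, i.e.\ \eqref{eq:twistedJ} with $\varepsilon'=+1$; the domain requirement is automatic because $J_U\tau_U=J_1$ preserves $\operatorname{Dom}(D)$.

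For (iii), which I expect to be the main obstacle, the decisive point is that unitarity together with $\tau(U)=U^*$ is very rigid. Writing the condition in components forces $U=\left(\begin{smallmatrix}p&q\\\bar q&\bar p\end{smallmatrix}\right)$, and then the off-diagonal entry of $U^*U=1$ reads $2\bar pq=0$, so at almost every point either $p=0$ or $q=0$; equivalently $U$ is pointwise diagonal or anti-diagonal, i.e.\ $\sigma_3U\sigma_3=\pm U$ a.e. This is exactly what makes $L_UR_U$ commute with $\gamma=L_{\sigma_3}R_{\sigma_3}$, because $\sigma_3U=\pm U\sigma_3$ turns $U\sigma_3X\sigma_3U$ into $\sigma_3UXU\sigma_3$; since $\tau$ and $J_1$ each commute with $\gamma$, so does $J_U=L_UR_U\tau J_1$. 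A naive attempt to commute $J_U$ past $\gamma$ appears to fail until one first uses unitarity to pin down this structure of $U$, which is why I flag (iii) as the crux.

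Finally, to assemble the spectral triple: \eqref{eq:2ndorder} implies \eqref{eq:realST} (as $\Cl_D(A)'\subset A'$), hence the reality and first-order conditions of Def.~\ref{df:realST}, while (i)--(iii) supply \eqref{eq:Ja} with $\varepsilon=+1$, \eqref{eq:twistedJ} with $\varepsilon'=+1$, and \eqref{eq:Jc} with $\varepsilon''=+1$. One also checks that $\tau_U=\tau_U^*=\tau_U^{-1}$ and that $\tau_U$ commutes with $J_U$ and with $A$, so $\tau_U$ is a legitimate twist and $(A,H,D,\gamma,J_U)$ is a unital even real spectral triple with $\tau_U$-twisted real structure of KO-dimension $0\bmod 8$.
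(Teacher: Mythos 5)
Your proposal is correct and takes essentially the same approach as the paper: part (i) by pushing $J_2$ through $L_UR_U$ via \eqref{eq:LR}, part (ii) by exploiting the antimultiplicativity of $\tau$ and reducing the twisted condition to the established reality of $J_1$ (the paper phrases this through the identity $J_U\tau_U=J_2\tau$, which is the same operator as your $J_1$, so the computations coincide), and part (iii) by the same pointwise classification of unitaries satisfying $\tau(U)=U^*$ into diagonal and anti-diagonal types that commute or anticommute with $\sigma_3$. The only differences are cosmetic routing choices, and your final assembly of the twisted real spectral triple matches the paper's conclusion.
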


\begin{proof}
(i) From \eqref{eq:LR} we deduce that $L_{U}R_{U}J_2=J_2L_{U^*}R_{U^*}$ and from the unitarity of $U$ it follows that $J_U^2=1$. It also follows that
$$
J_UL_mJ_U=J_2L_{U^*mU}J_2=R_{U^*mU} \in\Cl_D(A)'
$$
for all $m\in M_2(L^\infty(\T^2))$.

\smallskip

\noindent
(ii) Consider some $m\in H$. Notice that $\tau(m)=\sigma_1m^{\mathrm{t}}\sigma_1$, so that $\tau$ is antimultiplicative and
$$
\tau\, L_UR_U(m)=\tau(UmU)=\tau(U)\tau(m)\tau(U)
$$
for all $m\in H$, which means that
$$
\tau\,L_UR_U=L_{\tau(U)}R_{\tau(U)}\tau .
$$
From the proof of part (i), we have $J_U\tau_U=J_2\tau$. We now compute
$$
\tau_UJ_U=L_UR_U\tau\,L_UR_UJ_2=L_{U\sigma(U)}R_{U\sigma(U)}\tau J_2 .
$$
If $\sigma(U)=U^*$ then $J_U\tau_U=J_2\tau=\tau J_2=\tau_UJ_U$ and
$$
\tau_UJ_UD=\tau J_2D=DJ_2\tau=DJ_U\tau_U.
$$
One similarly checks that $\tau_U^2=1$. Since $J_U\tau_U=J_2\tau$, the operator clearly preserves the domain of $D$.\footnote{This holds even if the matrix entries of $U$ are not necessarily in the domain of $D$.}

\smallskip

\noindent
(iii) For almost all $p\in\T^2$, $U(p)$ is a constant unitary matrix and the compatibility condition with $\tau$ implies that
$$
U(p)=\begin{pmatrix} a & b \\ \overline{b} & \overline{a} \end{pmatrix}
$$
for some $a,b\in\C$. Such a matrix is unitary if and only if it is of the form
\begin{equation}\label{eq:twotypes}
U(p)=\begin{pmatrix} e^{i\theta} & 0 \\ 0 & e^{-i\theta} \end{pmatrix}
\qquad\text{or}\qquad
U(p)=\begin{pmatrix} 0 & e^{i\theta} \\ e^{-i\theta} & 0 \end{pmatrix}
\end{equation}
for some $\theta\in\R$. In the first case $U(p)$ commutes with $\sigma_3$, while in the second they anticommute. In both cases $L_{U(p)}R_{U(p)}$ commutes with $\gamma$.
\end{proof}

In Proposition \ref{prop:12}(ii) the condition $\tau(U)=-U^*$ would work as well, but notice that it implies $\tau(iU)=(iU)^*$ and the rescaling $U\mapsto iU$ simply changes $J_U$ and $\tau_U$ by a sign.
The condition $\tau(U)=U^*$ implies that, at almost every point $p$, $U$ is of one of the two types in \eqref{eq:twotypes}, though both $\theta$ and the type may depend on $p$
($U$ being a constant matrix is a special case).

\medskip

Let us close this section with a comment on orientability. Note that $\Omega_D^1(A)\neq 0$, since it is isomorphic to the $A$-module of de Rham forms on the torus. The spectral triple in Proposition \ref{prop:10} is then not orientable, since it satisfies the second-order condition (see the remark at the end of \S\ref{sec:Morita}). But the triple in Proposition \ref{prop:9} is also not orientable: $\gamma\notin\Cl_D(A)$ since, for example, it doesn't commute with $R_{\sigma_1}\in\Cl_D(A)'$. To get an orientable spectral triple we need to choose a different grading operator.

\begin{prop}
The unital even real spectral triple $(A,H,D,L_{\sigma_3},J_1)$ is orientable.
\end{prop}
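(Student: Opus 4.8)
The plan is to verify directly from the definition of orientability that a Hochschild $2$-cycle exists whose image under $c\mapsto\sum a_0[D,a_1][D,a_2]$ is the grading $L_{\sigma_3}$. The conceptual point, already anticipated in the discussion preceding the statement, is that with the new grading the obstruction disappears: $L_{\sigma_3}$ is left multiplication by the constant matrix $\sigma_3\in M_2(\R)\subset M_2(C(\T^2))$, hence $L_{\sigma_3}\in\Cl_D(A)$ by Lemma~\ref{lemma:7}, whereas the degree grading $\gamma$ did not lie in $\Cl_D(A)$. I would first dispatch the (routine) claim that $(A,H,D,L_{\sigma_3},J_1)$ is indeed a unital even real spectral triple: the reality \eqref{eq:commutanta} and first-order \eqref{eq:commutantb} conditions and the relation \eqref{eq:Ja} are unchanged from Proposition~\ref{prop:9}, as they do not involve the grading, while $L_{\sigma_3}^2=1$, self-adjointness, commutation with $A$, anticommutation with $D$, and the relation $J_1 L_{\sigma_3}=\varepsilon''L_{\sigma_3}J_1$ (with $\varepsilon''=-1$, since $J_1 L_{\sigma_3}J_1=L_{J_1(\sigma_3)}=L_{-\sigma_3}$ by \eqref{eq:LR}) are immediate.

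For the orientation cycle itself I would work with the unitary generators $u,v$ of $A$ from \eqref{eq:generators}. Recall that $[D,u]=-uL_{\sigma_1}$ and $[D,v]=-vL_{\sigma_2}$ (equivalently $-u^*[D,u]=L_{\sigma_1}$ and $-v^*[D,v]=L_{\sigma_2}$). My candidate is the antisymmetric $2$-chain
\begin{equation*}
c=-\tfrac{i}{2}\bigl(u^*v^*\otimes u\otimes v-u^*v^*\otimes v\otimes u\bigr)\in A\otimes A\otimes A.
\end{equation*}
Applying the map $a_0\otimes a_1\otimes a_2\mapsto a_0[D,a_1][D,a_2]$ and using that scalar functions commute with the $L_{\sigma_j}$, together with $L_{\sigma_1}L_{\sigma_2}=L_{\sigma_1\sigma_2}=iL_{\sigma_3}$ and $L_{\sigma_2}L_{\sigma_1}=-iL_{\sigma_3}$, the first tensor maps to $iL_{\sigma_3}$ and the second to $-iL_{\sigma_3}$, so the image of $c$ is $-\tfrac{i}{2}(2i)L_{\sigma_3}=L_{\sigma_3}$, as required.

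It remains to check that $c$ is a genuine Hochschild cycle, and this is the step I expect to carry the real content: a single elementary tensor is not closed, and it is precisely the antisymmetrisation over the two orderings of $u,v$ that produces a cycle. Using the boundary $b(a_0\otimes a_1\otimes a_2)=a_0a_1\otimes a_2-a_0\otimes a_1a_2+a_2a_0\otimes a_1$ and the commutativity of $A$, I would compute $b$ of each summand (for instance $b(u^*v^*\otimes u\otimes v)=v^*\otimes v-u^*v^*\otimes uv+u^*\otimes u$), observe that the ``outer'' terms coincide for the two summands while the middle terms $u^*v^*\otimes uv$ and $u^*v^*\otimes vu$ agree because $uv=vu$, and conclude $b(c)=0$. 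With $c\in Z_2(A,A)$ and $\sum a_0[D,a_1][D,a_2]=L_{\sigma_3}$ equal to the grading, the spectral triple is orientable.
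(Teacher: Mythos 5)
Your proposal is correct and follows essentially the same route as the paper: the paper's proof exhibits exactly the same Hochschild $2$-cycle $c=-2^{-1}iu^*v^*\otimes(u\otimes v-v\otimes u)$, notes that $-2^{-1}iu^*v^*\bigl([D,u][D,v]-[D,v][D,u]\bigr)=L_{\sigma_3}$, and leaves the verification $b(c)=0$ as a simple computation. The only difference is that you spell out the computations (including the sign $\varepsilon''=-1$ and the boundary check) which the paper leaves implicit.
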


\begin{proof}
A straightforward computation. A possible choice of Hochschild $2$-cycle giving the orientation is
$$
c=-2^{-1}iu^*v^*\otimes (u\otimes v-v\otimes u) .
$$
Clearly
$$
-2^{-1}iu^*v^*\big([D,u][D,v]-[D,v][D,u]\big)=L_{\sigma_3}
$$
is the grading, and one can check that the Hochschild boundary of $c$ is zero with a simple computation.
\end{proof}

One may wonder what this new grading is. Let $\chi\coloneqq T^{-1}\circ L_{\sigma_3}\circ T$. Then one finds
$$
\chi(1)=idx\wedge dy ,\qquad
\chi(dx)=idy ,\qquad
\chi(dy)=-idx ,\qquad
\chi(dx\wedge dy)=-i .
$$
Evidently the map $\chi$ is the grading coming from the Hodge star operator.

\section{Products of spectral triples}\label{sec:products}

Let $(A_1,H_1,D_1,\gamma_1,J_1)$ and $(A_2,H_2,D_2,J_2)$ be
two unital real spectral triples, the former even. We define their product $(A,H,D,J)$ by
\begin{equation}\label{eq:product}
A=A_1\otimes A_2 ,\quad
H=H_1\otimes H_2 ,\quad
D=D_1\otimes 1+\gamma_1\otimes D_2 ,\quad
J=J_1\otimes J_2 .
\end{equation}
Here we assume that $A_1$ and $A_2$ are both complex, so that $\otimes$ is everywhere the tensor product over $\C$ (algebraic, minimal or of Hilbert spaces depending on the type of object we are considering).
If both spectral triples are even, a grading on the product is given by:$$
\gamma=\gamma_1\otimes\gamma_2 .
$$
We will not consider the case where both spectral triples are odd. It is not very different, but for the sake of brevity we will always assume that at least one of the spectral triples is even. Notice that the example we are interested in, the Hodge-de Rham spectral triple of a closed oriented Riemannian manifold, is always even.

If $J_1$ and $J_2$ satisfy \eqref{eq:twistedJ} for some twists $\tau_1$ and $\tau_2$, then $J$ satisfies \eqref{eq:twistedJ} with twist $\tau=\tau_1\otimes\tau_2$.\footnote{For suitable values of the signs $\varepsilon,\varepsilon',\varepsilon''$, which will not be discussed here. See the comments in \S\ref{sec:2ndproduct}.}

\begin{lemma}\label{lemma:13}
$\Omega^1_D(A)=\Omega^1_{D_1}(A_1)\otimes A_2+\gamma_1A_1\otimes\Omega^1_{D_2}(A_2)$.
\end{lemma}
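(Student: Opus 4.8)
The plan is to compute $[D,b]$ explicitly for an elementary tensor $b=b_1\otimes b_2$ and then establish the two inclusions separately, using unitality of $A_1$ and $A_2$ to isolate each summand. First I would expand, using $D=D_1\otimes1+\gamma_1\otimes D_2$ and the bilinearity of the commutator, together with the fact that $\gamma_1$ commutes with $A_1$:
$$
[D,b_1\otimes b_2]=[D_1,b_1]\otimes b_2+b_1\gamma_1\otimes[D_2,b_2],
$$
where $\gamma_1 b_1=b_1\gamma_1$ lets the grading pass through the first factor. Left-multiplying by $a=a_1\otimes a_2$ then yields
$$
a[D,b]=a_1[D_1,b_1]\otimes a_2b_2+a_1b_1\gamma_1\otimes a_2[D_2,b_2].
$$

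For the inclusion $\subseteq$, I note that the first term lies in $\Omega^1_{D_1}(A_1)\otimes A_2$ and the second in $\gamma_1A_1\otimes\Omega^1_{D_2}(A_2)$ (moving $\gamma_1$ to the left of the $A_1$-factor, which is permitted since it commutes with $A_1$). As $\Omega^1_D(A)$ is by definition the linear span of the operators $a[D,b]$, it is contained in the stated sum.

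For the reverse inclusion $\supseteq$, I exploit unitality. Choosing $b=b_1\otimes1$ annihilates the second term, since $[D_2,1]=0$, and produces $a_1[D_1,b_1]\otimes a_2$; letting $a_2$ range over $A_2$ while $a_1[D_1,b_1]$ ranges over $\Omega^1_{D_1}(A_1)$ recovers all of $\Omega^1_{D_1}(A_1)\otimes A_2$. Symmetrically, choosing $b=1\otimes b_2$ annihilates the first term and produces $a_1\gamma_1\otimes a_2[D_2,b_2]$, which sweeps out $\gamma_1A_1\otimes\Omega^1_{D_2}(A_2)$.

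The computation is entirely routine and presents no substantive obstacle. The only point requiring care is the bookkeeping of $\gamma_1$: one must invoke $[\gamma_1,A_1]=0$ both to derive the commutator formula and to recognise $b_1\gamma_1=\gamma_1b_1$ as an element of $\gamma_1A_1$. Unitality of the two algebras is precisely what makes the reverse inclusion fall out cleanly.
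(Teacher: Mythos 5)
Your proof is correct and follows essentially the same route as the paper's: expand $(a_1\otimes a_2)[D,b_1\otimes b_2]$ using the product form of $D$ and the fact that $\gamma_1$ commutes with $A_1$ to get the inclusion $\subseteq$, then use unitality (setting $b_2=1$, respectively $b_1=1$) to recover each summand and obtain $\supseteq$. No gaps; the bookkeeping of $\gamma_1$ and the reduction to decomposable tensors are handled just as in the paper.
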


\begin{proof}
Elements of the algebraic tensor product $A_1\otimes A_2$ are finite sums of decomposable tensors. From the fact that
\begin{equation}\label{eq:15}
(a_1\otimes a_2)[D,b_1\otimes b_2]=a_1[D_1,b_1]\otimes a_2b_2+\gamma_1a_1b_1\otimes a_2[D_2,b_2]
\end{equation}
for all $a_1,b_1\in A_1$ and $a_2,b_2\in A_2$,
we get the inclusion
$$
\Omega^1_D(A)\subset\Omega^1_{D_1}(A_1)\otimes A_2+\gamma_1A_1\otimes\Omega^1_{D_2}(A_2) .
$$
Since $A_1$ and $A_2$ are unital, if we choose $b_1=1$ in \eqref{eq:15} we find that $\gamma_1A_1\otimes\Omega^1_{D_2}(A_2)$ is a vector subspace of $\Omega^1_D(A)$; if we choose $b_2=1$ (and $b_1$ arbitrary), we find that $\Omega^1_{D_1}(A_1)\otimes A_2$ is a vector subspace of $\Omega^1_D(A)$.
\end{proof}

\noindent It then follows that
\begin{equation}\label{eq:16}
\Cl_D(A)\subset
\Cl^{\gamma_1}_{D_1}(A_1)\otimes\Cl_{D_2}(A_2).
\end{equation}

\begin{lemma}\label{lemma:2}
If $\gamma_1\in\Cl_{D_1}(A_1)$, then
\begin{equation}\label{eq:cliffordproduct}
\Cl_D(A)=\Cl_{D_1}(A_1)\otimes\Cl_{D_2}(A_2).
\end{equation}
\end{lemma}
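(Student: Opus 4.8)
The plan is to prove the two inclusions of \eqref{eq:cliffordproduct} separately, the nontrivial content being the reverse inclusion $\Cl_{D_1}(A_1)\otimes\Cl_{D_2}(A_2)\subseteq\Cl_D(A)$. The inclusion ``$\subseteq$'' comes essentially for free from \eqref{eq:16}: we already know $\Cl_D(A)\subset\Cl^{\gamma_1}_{D_1}(A_1)\otimes\Cl_{D_2}(A_2)$, and the hypothesis $\gamma_1\in\Cl_{D_1}(A_1)$ forces $\Cl^{\gamma_1}_{D_1}(A_1)=\Cl_{D_1}(A_1)$, since adjoining to $\Cl_{D_1}(A_1)$ an element it already contains changes nothing.

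For the reverse inclusion I would extract generators of each tensor factor from Lemma \ref{lemma:13}. First, taking the $a_2=1$ slice of $\Omega^1_{D_1}(A_1)\otimes A_2$ shows $\Omega^1_{D_1}(A_1)\otimes 1\subset\Omega^1_D(A)\subset\Cl_D(A)$, while $A_1\otimes 1\subset A\subset\Cl_D(A)$ trivially. Since $x\mapsto x\otimes 1$ is an isometric $*$-homomorphism $\mathcal{B}(H_1)\to\mathcal{B}(H_1\otimes H_2)$ and $\Cl_{D_1}(A_1)$ is the $C^*$-algebra generated by $A_1$ and $\Omega^1_{D_1}(A_1)$, the norm-closedness of $\Cl_D(A)$ gives $\Cl_{D_1}(A_1)\otimes 1\subset\Cl_D(A)$; in particular $\gamma_1\otimes 1\in\Cl_D(A)$.

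The key step is then to use this element to untwist the second summand of Lemma \ref{lemma:13}. By that lemma $\gamma_1A_1\otimes\Omega^1_{D_2}(A_2)\subset\Cl_D(A)$; multiplying on the left by $\gamma_1\otimes 1\in\Cl_D(A)$ and using $\gamma_1^2=1$ yields $A_1\otimes\Omega^1_{D_2}(A_2)\subset\Cl_D(A)$, hence $1\otimes\Omega^1_{D_2}(A_2)\subset\Cl_D(A)$. Since $1\otimes A_2\subset\Cl_D(A)$ as well, the same generation argument applied to the $*$-homomorphism $y\mapsto 1\otimes y$ gives $1\otimes\Cl_{D_2}(A_2)\subset\Cl_D(A)$. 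Finally, every elementary tensor factors as $x\otimes y=(x\otimes 1)(1\otimes y)$, so $\Cl_D(A)$ contains the algebraic tensor product of $\Cl_{D_1}(A_1)$ and $\Cl_{D_2}(A_2)$ and, being norm-closed, contains its completion, namely the minimal tensor product $\Cl_{D_1}(A_1)\otimes\Cl_{D_2}(A_2)$.

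The main obstacle is precisely the untwisting step. Without the hypothesis $\gamma_1\in\Cl_{D_1}(A_1)$ one only controls the twisted space $\gamma_1A_1\otimes\Omega^1_{D_2}(A_2)$ rather than $A_1\otimes\Omega^1_{D_2}(A_2)$, and cannot isolate $1\otimes\Omega^1_{D_2}(A_2)$; this is exactly where the hypothesis is used, and it explains why in general one only obtains the inclusion \eqref{eq:16} with the enlarged algebra $\Cl^{\gamma_1}_{D_1}(A_1)$ on the right. The remaining verifications ($*$-homomorphism property of the two embeddings, density of the algebraic tensor product) are routine.
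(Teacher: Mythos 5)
Your proof is correct and takes essentially the same route as the paper's: the forward inclusion via \eqref{eq:16} and $\Cl^{\gamma_1}_{D_1}(A_1)=\Cl_{D_1}(A_1)$, and the reverse inclusion by extracting $\Cl_{D_1}(A_1)\otimes 1$, $1\otimes A_2$ and $\gamma_1\otimes\Omega^1_{D_2}(A_2)$ from Lemma \ref{lemma:13} and then using $\gamma_1\otimes 1\in\Cl_D(A)$ (the only place the hypothesis enters) to untwist and obtain $1\otimes\Omega^1_{D_2}(A_2)\subset\Cl_D(A)$. The only difference is that you spell out the routine generation and norm-closure arguments more explicitly than the paper does.
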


\begin{proof}
Since $\Cl^{\gamma_1}_{D_1}(A_1)=\Cl_{D_1}(A_1)$,
the inclusion ``$\subset$'' follows from \eqref{eq:16}.

We saw in the proof of Lemma \ref{lemma:13} that
$\Omega^1_{D_1}(A_1)\otimes A_2$ and 
$\gamma_1A_1\otimes\Omega^1_{D_2}(A_2)$
are contained in $\Omega^1_D(A)$, and therefore in $\Cl_D(A)$.

Since $A_2$ is unital, both
$\Omega^1_{D_1}(A_1)\otimes 1$ and $A_1\otimes 1\subset A$ are in $\Cl_D(A)$. Thus $\Cl_D(A)\supset\Cl_{D_1}(A_1)\otimes 1$.

Since $A_1$ is unital, both
$\gamma_1\otimes\Omega^1_{D_2}(A_2)$ and $1\otimes A_2$ are in $\Cl_D(A)$. But $\gamma_1\otimes 1\in \Cl_{D_1}(A_1)\otimes 1\subset\Cl_D(A)$ as well. Therefore $1\otimes\Omega^1_{D_2}(A_2)=(\gamma_1\otimes 1)\big(\gamma_1\otimes\Omega^1_{D_2}(A_2)\big)$ is contained in $\Cl_D(A)$ as well. This proves that $\Cl_D(A)\supset 1\otimes\Cl_{D_2}(A_2)$ and hence the inclusion
$\Cl_D(A)\supset\Cl_{D_1}(A_1)\otimes\Cl_{D_2}(A_2)$.
\end{proof}

\subsection{Products and the spin condition}
Given two spectral triples satisfying one of the conditions in Def. \ref{df:5}, one wonders if the product satisfies such a condition as well. The answer is affirmative for condition \eqref{eq:spin}.

\begin{prop}[Spin]
If two unital real spectral triples satisfy \eqref{eq:spin}, then their product satisfies \eqref{eq:spin} as well.
\end{prop}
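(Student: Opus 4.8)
The plan is to construct the required imprimitivity bimodule for the product as the external tensor product of the two given ones, after first using the spin hypothesis on the even factor to put the Clifford algebra of the product into a clean tensor-product form. Throughout I keep the notation of \eqref{eq:product}, with $(A_1,H_1,D_1,\gamma_1,J_1)$ the even factor.

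The crucial structural input comes first. Since the even factor satisfies \eqref{eq:spin}, Proposition \ref{prop:6}(i) gives $\gamma_1\in\Cl_{D_1}(A_1)$. This is exactly the hypothesis needed to invoke Lemma \ref{lemma:2}, which upgrades the inclusion \eqref{eq:16} to the equality
$$
\Cl_D(A)=\Cl_{D_1}(A_1)\otimes\Cl_{D_2}(A_2).
$$
Thus the left action of the Clifford algebra of the product on $H=H_1\otimes H_2$ is precisely the tensor product of the two individual left actions. I would emphasise that this step is the genuine heart of the argument: without $\gamma_1\in\Cl_{D_1}(A_1)$ one would only control $\Cl_D(A)$ as a subalgebra of $\Cl^{\gamma_1}_{D_1}(A_1)\otimes\Cl_{D_2}(A_2)$, and the identification of compact operators below would break down.

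Next I would assemble the bimodule. Let $E_i\subset H_i$ be the dense subspace realising $\Cl_{D_i}(A_i)\sim_{J_i}\overline{A_i}$, i.e.\ a full right Hilbert $\overline{A_i}$-module (with right action $a\mapsto a^{\circ_i}$) for which $\Cl_{D_i}(A_i)=\mathcal{K}_{\overline{A_i}}(E_i)$. I take $E:=E_1\otimes E_2$, the external tensor product of Hilbert modules completed in the module norm; since $E_1\otimes_{\mathrm{alg}}E_2$ is already dense in $H_1\otimes H_2$, the module $E$ is dense in $H$. The right action of $\overline{A}$ is implemented by $J=J_1\otimes J_2$, and on decomposable elements one computes
$$
(a_1\otimes a_2)^\circ=J(a_1\otimes a_2)^*J^{-1}=a_1^{\circ_1}\otimes a_2^{\circ_2},
$$
so the $J$-twisted right $\overline{A}=\overline{A_1}\otimes\overline{A_2}$-module structure on $E$ is exactly the external tensor product of the two right module structures.

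Finally I would invoke the standard fact (see \cite{RW98}) that the external tensor product of imprimitivity bimodules is again an imprimitivity bimodule: $E_1\otimes E_2$ is full over $\overline{A_1}\otimes\overline{A_2}$ and
$$
\mathcal{K}_{\overline{A_1}\otimes\overline{A_2}}(E_1\otimes E_2)=\mathcal{K}_{\overline{A_1}}(E_1)\otimes\mathcal{K}_{\overline{A_2}}(E_2)=\Cl_{D_1}(A_1)\otimes\Cl_{D_2}(A_2)=\Cl_D(A).
$$
This exhibits $E$ as a $\Cl_D(A)$-$\overline{A}$ imprimitivity bimodule with $\phi=\id$, which is precisely the statement $\Cl_D(A)\sim_J\overline{A}$, i.e.\ the spin condition \eqref{eq:spin} for the product. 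The only delicate points I expect are bookkeeping ones: matching the algebraic, module-norm, and minimal $C^*$-completions (in particular checking $\overline{A_1\otimes A_2}=\overline{A_1}\otimes\overline{A_2}$, which holds because the Hilbert-space tensor product induces the spatial norm on $A_1\otimes A_2$) and confirming that the twisted right action factorises as above. Once the factorisation $\Cl_D(A)=\Cl_{D_1}(A_1)\otimes\Cl_{D_2}(A_2)$ is in hand, the Morita-theoretic content is carried entirely by the cited tensor-product result.
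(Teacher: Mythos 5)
Your proof is correct and follows essentially the same route as the paper's: both invoke Prop.~\ref{prop:6}(i) to get $\gamma_1\in\Cl_{D_1}(A_1)$, apply Lemma \ref{lemma:2} to obtain $\Cl_D(A)=\Cl_{D_1}(A_1)\otimes\Cl_{D_2}(A_2)$, take $E=E_1\otimes E_2$ as the exterior tensor product of Hilbert modules with the right $\overline{A}$-action implemented by $J=J_1\otimes J_2$, and conclude via $\mathcal{K}_{\overline{A}_1}(E_1)\otimes\mathcal{K}_{\overline{A}_2}(E_2)=\mathcal{K}_{\overline{A}}(E)$. The only difference is cosmetic ordering (you establish the Clifford-algebra factorisation first, the paper does it last), plus your slightly more explicit bookkeeping of the completion $\overline{A}_1\otimes\overline{A}_2=\overline{A}$.
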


\begin{proof}
Using the notation above, suppose $E_1\subset H_1$ and $E_2\subset H_2$ are dense subspaces, with $E_1$ and $E_2$ full right Hilbert $\overline{A}_1$- and $\overline{A}_2$-modules respectively, where the right action of $\xi\in\overline{A}_i$ is given by $J_i\xi^*J_i^{-1}$, and suppose for $i=1,2$ one has
$$
\Cl_{D_i}(A_i)=\mathcal{K}_{\overline{A}_i}(E_i) .
$$
Let $E\coloneqq E_1\otimes E_2$ be the exterior tensor product of Hilbert modules. This is a full right Hilbert $\overline{A}_1\otimes\overline{A}_2$-module, where here the tensor product is the minimal tensor product of $C^*$-algebras. Note that $\overline{A}_1\otimes\overline{A}_2=\overline{A}$. The right action of a decomposable tensor $\xi=\xi_1\otimes\xi_2\in\overline{A}$ is given by 
$J\xi^*J^{-1}=J_1\xi_1^*J_1^{-1}\otimes J_2\xi_2^*J_2^{-1}$, so by linearity and continuity the right action of any element $\overline{A}$ is implemented by $J=J_1\otimes J_2$. One also has has
$$
\Cl_{D_1}(A_1)\otimes\Cl_{D_2}(A_2)=\mathcal{K}_{\overline{A}_1}(E_1)\otimes\mathcal{K}_{\overline{A}_2}(E_2) .
$$
But $\mathcal{K}_{\overline{A}_1}(E_1)\otimes\mathcal{K}_{\overline{A}_2}(E_2)=\mathcal{K}_{\overline{A}}(E)$ (see \emph{e.g.},~\cite[p.~45]{Lan95}). From Proposition \ref{prop:6}(i) it follows that $\gamma_1\in\Cl_D(A)$. From Lemma \ref{lemma:2} it follows that $\Cl_{D_1}(A_1)\otimes\Cl_{D_2}(A_2)=\Cl_D(A)$. Hence $\Cl_D(A)=\mathcal{K}_{\overline{A}}(E)$ as requested, and the product spectral triple satisfies \eqref{eq:spin}.
\end{proof}

Recall that if an even spectral triple is spin then it is also even-spin. In the next example we present two even-spin spectral triples whose product is not even-spin (and then also not spin).

\begin{ex}[Even-spin]\label{ex:evenspin}
Let $A_1=M_2(\C)$ be represented on $H_1=M_2(\C)\otimes\C^2$ by left multiplication on the first factor (we think of elements of $\C^2$ as column vectors), and $J_1(a\otimes v)\coloneqq a^*\otimes\overline{v}$ where $a^*$ is the Hermitian conjugate and $\overline{v}$ the componentwise conjugation. Also let
$$
D_1(a\otimes v)\coloneqq [\sigma_1,a]\otimes\sigma_1v ,\qquad
\gamma_1\coloneqq 1\otimes\sigma_3 .
$$
Since for $a=-i\sigma_3$ and $b=\sigma_2\in A_1$ one has
\begin{equation}\label{eq:2}
a[D_1,b]=1\otimes \sigma_1 =: \omega ,
\end{equation}
and $1$-forms are freely generated (as an $A_1$-module) by $\omega$.
The Clifford algebra $\Cl_{D_1}^{\gamma_1}(A_1)$ is then generated by $M_2(\C)\otimes 1$, $\omega=1\otimes\sigma_1$, $\gamma_1=1\otimes\sigma_3$. Hence
$$
\Cl_{D_1}^{\gamma_1}(A_1)=M_2(\C)\otimes M_2(\C)
$$
with its action on $H_1$ given by left multiplication. The commutant is $\Cl_{D_1}^{\gamma_1}(A_1)'=J_1A_1J_1^{-1}$ and \eqref{eq:evenspin} is satisfied.
Note that $\gamma_1\notin\Cl_{D_1}(A_1)$, so \eqref{eq:spin} is not satisfied.

Let $(A,H,D,\gamma,J)$ be a product of two copies of the above spectral triple. $\Cl_D^\gamma(A)$ is generated by $A_1$, $A_2$, $\gamma_1\otimes\gamma_2=1\otimes\sigma_3\otimes 1\otimes\sigma_3$, and the $1$-forms
$$
\omega\otimes 1=1\otimes\sigma_1\otimes 1\otimes 1 ,\qquad
\gamma_1\otimes\omega=1\otimes\sigma_3\otimes 1\otimes \sigma_1 .
$$
The element
$$
1\otimes\sigma_1\otimes 1\otimes \sigma_2
$$
is in the commutant of $\Cl_D^\gamma(A)$, but it doesn't belong to $JAJ^{-1}$.
\end{ex}

Let us make note of this result.

\begin{rem}
There exist spectral triples satisfying \eqref{eq:evenspin} whose product does not satisfy \eqref{eq:evenspin} (nor \eqref{eq:spin}).
\end{rem}

Finally, let us consider a mixed case. In the following we present an even-spin spectral triple and a spin spectral triple (which is in fact also Hodge), whose product is not even-spin (and indeed also not spin).

\begin{ex}[Mixed]
Take the first spectral triple to be as in Example \ref{ex:evenspin} and let the second one be given by $A_2=H_2=M_2(\C)$, $D_2(a)=[\sigma_1,a]$, and $J_2(a)=a^*$ for all $a\in A_2$. This spectral triple satisfies both \eqref{eq:spin} and \eqref{eq:Hodge} (which is only possible in the finite-dimensional case when $1$-forms are contained in the algebra, $\Omega^1_{D_2}(A_2)\subset A_2$, which implies $A_2=\Cl_{D_2}(A)$).

The Clifford algebra $\Cl_D^\gamma(A)$ of the product triple is generated by $A_1\otimes 1,1\otimes A_2$ and the $1$-form $\omega\otimes 1=1\otimes\sigma_1\otimes 1$ (where $\omega$ is the element \eqref{eq:2}); the latter element belongs to the commutant of $\Cl_D^\gamma(A)$, but not to $A_1\otimes A_2$, hence the product spectral triple does not satisfy \eqref{eq:evenspin}.
\end{ex}

Mixed Hodge-spin cases are not particularly interesting. There is no reason to expect that such a product is either Hodge or spin
-- there is also no reason to expect that a product of two Hodge spectral triples is spin, or that a product of two (even-)spin spectral triples is Hodge -- and in fact it is quite easy to produce counterexamples.

\subsection{Products and the second-order condition}\label{sec:2ndproduct}
Given two real spectral triples, one may wonder how the KO-dimension of their product is related to those of its two factors. It would be nice if the KO-dimension were multiplicative, but unfortunately if the real structure is defined by $J=J_1\otimes J_2$ this is not true. This was first noticed in \cite{Van99}, where a modified definition of $J$ was proposed in order to fix this problem (taking either $J=J_1\otimes J_2\gamma_2$ or $J=J_1\gamma_1\otimes J_2$ depending on the dimension of the factors). This study was completed in \cite{DD11}, where the odd-odd case was considered as well (in \cite{Van99} one of the spectral triples is always assumed to be even), along with several possible choices of Dirac operators and real structures. The modified definition of $J$, which perhaps seems somewhat artificial, was reinterpreted in \cite{Far17} as a graded tensor product.

Here we follow this idea in spirit, but we will find that the ``correct'' definition of $J$ is not the one in \cite{DD11,Far17,Van99}. Our motivation is that we want the second-order property \eqref{eq:2ndorder} to be preserved by products, and this will lead to yet another different definition of $J$. Although the natural way to study products of real spectral triples is in the category of graded vector spaces, we will argue that in terms of ``ungraded'' objects and operations this amounts to merely changing the real structure.

\medskip

We will use the same notation as the previous section and assume that we have two unital real spectral triples $(A_i,H_i,D_i,\gamma_i,J_i)$, $i=1,2$. For simplicity we will also assume that both spectral triples are even.\footnote{Recall that the Hodge-de Rham spectral triple of a closed oriented Riemannian manifold is always even, regardless of the dimension of the manifold, whether even or odd.}
If $v\in H_1$ is an eigenvector of $\gamma_1$ with eigenvalue $(-1)^{|v|}$ we will say that $v$ is homogeneous of degree $|v|$. Explicitly,
$$
|v|=\bigg\{\begin{array}{ll}
0 &\text{if}\;\gamma_1(v)=+v, \\[1pt]
1 &\text{if}\;\gamma_1(v)=-v.
\end{array} 
$$
A bounded operator $a\in\mathcal{B}(H_1)$ has degree $0$ (called \emph{even}) if it commutes with $\gamma_1$, and degree $1$ (called \emph{odd}) if it anticommutes with it. This notion extends to unbounded operators (such as $D_1$), provided that $\gamma_1$ preserves their domain.

The same definitions apply to the second spectral triple. 
According to Koszul's rule of signs, the graded tensor product $a\odot b\in\mathcal{B}(H)$ of two (homogeneous) bounded operators is now defined by
$$
(a\odot b)(v\otimes w)\coloneqq (-1)^{|b|\,|v|}av\otimes bw
$$
for all homogeneous vectors $v\in H_1$ and $w\in H_2$. This definition also makes sense when one of the two operators is unbounded. For example, if $a$ is unbounded, then $a\odot b$ will be unbounded with its domain given by the algebraic tensor product of $\operatorname{Dom}(a)$ and $H_1$.

If $b$ is odd, $a\odot b=a\gamma_1\otimes b$. Thus, the Dirac operator in a product of spectral triples can be written as (the closure of)
$$
D=D_1\odot 1+1\odot D_2 .
$$
Since we are considering unital spectral triples, the following lemma now becomes evident.

\begin{lemma}\label{lemma:21b}
$\Cl_D(A)=\Cl_{D_1}(A_1)\odot\Cl_{D_2}(A_2)$.
\end{lemma}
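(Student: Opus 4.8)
The plan is to reduce the statement to two ingredients: a concrete operator realisation of the graded tensor product, and the graded recasting of Lemma~\ref{lemma:13}. Throughout I use that every element of $A_i$ is even and every element of $\Omega^1_{D_i}(A_i)$ is odd (the latter because $\gamma_i$ commutes with $A_i$ and anticommutes with $D_i$), together with the identity $a\odot b=a\gamma_1^{|b|}\otimes b$ valid for homogeneous $b$. First I would record that the two ``corner'' maps
\[
\iota_1\colon \Cl_{D_1}(A_1)\to\mathcal{B}(H),\ a\mapsto a\odot 1=a\otimes 1,
\qquad
\iota_2\colon \Cl_{D_2}(A_2)\to\mathcal{B}(H),\ b\mapsto 1\odot b,
\]
are both $*$-homomorphisms of $C^*$-algebras. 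For $\iota_1$ this is clear. For $\iota_2$, writing $b=b_0+b_1$ in even and odd parts one has $\iota_2(b)=1\otimes b_0+\gamma_1\otimes b_1$, and a short computation using $\gamma_1^2=1$ gives $\iota_2(b)\iota_2(c)=\iota_2(bc)$; this is the statement that the Koszul sign $(-1)^{|b|\,|a|}$ is trivial when the left-hand factor $a$ is the unit. Since $(a\odot 1)(1\odot b)=a\odot b$, the graded tensor product $\Cl_{D_1}(A_1)\odot\Cl_{D_2}(A_2)$ is precisely the $C^*$-algebra generated by the images of $\iota_1$ and $\iota_2$.

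Next I would recast Lemma~\ref{lemma:13}: since $\omega_1\odot a_2=\omega_1\otimes a_2$ and $a_1\odot\omega_2=\gamma_1 a_1\otimes\omega_2$ for $a_i$ even and $\omega_i$ odd, that lemma reads $\Omega^1_D(A)=\Omega^1_{D_1}(A_1)\odot A_2+A_1\odot\Omega^1_{D_2}(A_2)$, while $A=A_1\odot A_2$. For the inclusion ``$\subset$'', each generator of $\Cl_D(A)$, namely the elements of $A$ and of $\Omega^1_D(A)$, is then visibly a graded tensor of an element of $\Cl_{D_1}(A_1)$ with one of $\Cl_{D_2}(A_2)$, hence lies in the $C^*$-algebra $\Cl_{D_1}(A_1)\odot\Cl_{D_2}(A_2)$; therefore so does the entire $C^*$-algebra they generate.

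For ``$\supset$'' I would check that the generators of each factor land in $\Cl_D(A)$. Inserting units in the opposite leg, $\iota_1(A_1)=A_1\otimes 1\subset A$ and $\iota_1(\Omega^1_{D_1}(A_1))\subset\Omega^1_{D_1}(A_1)\odot A_2\subset\Omega^1_D(A)$, so $\iota_1(\Cl_{D_1}(A_1))\subset\Cl_D(A)$ because $\iota_1$ is a $*$-homomorphism and these elements generate; similarly $\iota_2(A_2)=1\otimes A_2\subset A$ and $\iota_2(\Omega^1_{D_2}(A_2))=\gamma_1\otimes\Omega^1_{D_2}(A_2)\subset A_1\odot\Omega^1_{D_2}(A_2)\subset\Omega^1_D(A)$, giving $\iota_2(\Cl_{D_2}(A_2))\subset\Cl_D(A)$. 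Since $\Cl_D(A)$ is an algebra, all products $(a\odot 1)(1\odot b)=a\odot b$ lie in it, and passing to closures yields $\Cl_{D_1}(A_1)\odot\Cl_{D_2}(A_2)\subset\Cl_D(A)$.

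The individual computations are routine; the only place demanding care is the bookkeeping of the grading. The conceptual point — and the reason this works \emph{without} the hypothesis $\gamma_1\in\Cl_{D_1}(A_1)$ that was needed in Lemma~\ref{lemma:2} — is that one must not try to extend $b\mapsto a\odot b$ multiplicatively for a fixed nontrivial $a$ (it is not a homomorphism, precisely because of the Koszul signs), but instead factor through the corner embeddings $\iota_1,\iota_2$, which \emph{are} genuine $*$-homomorphisms and whose images together generate the graded tensor product. Getting the signs and the placement of the $\gamma_1$ factors right in the realisation $a\odot b=a\gamma_1^{|b|}\otimes b$ is the main (and rather mild) obstacle.
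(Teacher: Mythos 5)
Your proof is correct and takes essentially the same route as the paper's: both inclusions are reduced, via Lemma~\ref{lemma:13} rewritten in graded form, to checking that the generators of each side lie in the other (the paper does this ``analogously to the proof of Lemma~\ref{lemma:2}''). Your corner $*$-homomorphisms $\iota_1,\iota_2$ simply make explicit the generation step that the paper leaves implicit, so the two arguments coincide in substance.
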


Here if $B_1\subset\mathcal{B}(H_1)$ and $B_2\subset\mathcal{B}(H_2)$ are $C^*$-subalgebras, we define 
$B_1\odot B_2$ as the norm closure in $\mathcal{B}(H)$ of the vector subspace spanned by elements $a\odot b$, with $a\in\mathcal{B}(H_1)$ and $b\in\mathcal{B}(H_2)$.

\begin{proof}
The inclusion ``$\subset$'' is given by \eqref{eq:16}. The opposite inclusion is analogous to the proof of Lemma \ref{lemma:2}: one shows that $A_1\otimes 1=A_1\odot 1$, $1\otimes A_2=1\odot A_2$,
$\Omega^1_{D_1}(A_1)\otimes 1=\Omega^1_{D_1}(A_1)\odot 1$
and $\gamma_1\otimes\Omega^1_{D_2}(A_2)=\gamma_1\odot\Omega^1_{D_2}(A_2)$ are contained in $\Cl_D(A)$, hence the thesis.
\end{proof}

The idea is now to modify the definition of the product real structure. Since this should in principle change the order of factors in a (tensor) product, what is suggested again by Koszul's rule of signs is to define $J$ by
\begin{equation}\label{eq:newJ}
J(v\otimes w) \coloneqq  (-1)^{|v|\hspace{1pt}|w|}J_1(v)\otimes J_2(w)
\end{equation}
for all homogeneous $v\in H_1$ and $w\in H_2$.\footnote{Note that this is exactly what happens in a product of Hodge-de Rham spectral triples of a manifold, if the real structure is the one coming from the main anti-involution of the Clifford algebra.}
With this choice, we have the following proposition.

\begin{prop}\label{prop:22}
Consider two unital even spectral triples with (possibly twisted) real structure, both satisfying the second-order condition. Then their product $(A,H,D,\gamma)$, equipped with the antilinear map in \eqref{eq:newJ}, satisfies the second-order condition as well.
\end{prop}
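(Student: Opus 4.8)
The plan is to pass to the graded-tensor-product description of the Clifford algebra and then track Koszul signs. By Lemma~\ref{lemma:21b} we have $\Cl_D(A)=\Cl_{D_1}(A_1)\odot\Cl_{D_2}(A_2)$, so to establish \eqref{eq:2ndorder} it suffices to show that $(a\odot b)^\circ$ commutes with $a'\odot b'$ for all homogeneous $a,a'\in\Cl_{D_1}(A_1)$ and $b,b'\in\Cl_{D_2}(A_2)$. Such elements span a norm-dense subspace, and since $\xi\mapsto\xi^\circ$ is an isometry and the commutant is norm-closed, verifying the commutation on this dense subspace is enough. The reduction to \emph{homogeneous} generators is legitimate because each $\gamma_i$ normalises $\Cl_{D_i}(A_i)$: it fixes $A_i$ and acts as $-1$ on $\Omega^1_{D_i}(A_i)$ (as $\gamma_i$ commutes with $A_i$ and anticommutes with $D_i$), so the even and odd parts $\tfrac12(\xi\pm\gamma_i\xi\gamma_i)$ of any $\xi\in\Cl_{D_i}(A_i)$ again lie in $\Cl_{D_i}(A_i)$.

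Writing $\circ_i$ for the operation $\xi\mapsto J_i\xi^*J_i^{-1}$ on $\mathcal{B}(H_i)$, the second-order hypothesis on each factor says precisely that $a^{\circ_1}\in\Cl_{D_1}(A_1)'$ and $b^{\circ_2}\in\Cl_{D_2}(A_2)'$, i.e. one has the ordinary commutations $[a^{\circ_1},a']=0$ and $[b^{\circ_2},b']=0$. The heart of the argument is to express $(a\odot b)^\circ$ through $a^{\circ_1}$ and $b^{\circ_2}$. Using the graded-adjoint rule $(a\odot b)^*=(-1)^{|a|\,|b|}a^*\odot b^*$ together with the definition \eqref{eq:newJ} of the product real structure, a direct computation on a homogeneous elementary tensor $v\otimes w$ yields
\[
(a\odot b)^\circ(v\otimes w)=(-1)^{s}\,a^{\circ_1}v\otimes b^{\circ_2}w ,
\]
where $s$ is a Koszul sign depending only on $|a|$, $|w|$ and on whether $J_2$ commutes or anticommutes with $\gamma_2$. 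Here one uses that $J_1a^*J_1^{-1}=a^{\circ_1}$ (and likewise for the second factor) and that conjugation by $J_i$ preserves operator degree, so that $|a^{\circ_1}|=|a|$ and $|b^{\circ_2}|=|b|$.

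With this formula in hand I would simply evaluate both $(a\odot b)^\circ(a'\odot b')$ and $(a'\odot b')(a\odot b)^\circ$ on $v\otimes w$. The vector parts agree on the nose thanks to the factorwise commutations $a^{\circ_1}a'=a'a^{\circ_1}$ and $b^{\circ_2}b'=b'b^{\circ_2}$, so the whole question collapses to checking that the accumulated Koszul signs on the two sides coincide. They do: both exponents reduce modulo $2$ to the \emph{termwise identical} expression built from $|a|\,|b'|$, $|a|\,|w|$, $|b'|\,|v|$ and the contribution of $J_2$'s grading behaviour, so the commutator vanishes. This sign cancellation is the crux, and it is exactly the reason the twisted prescription \eqref{eq:newJ} -- rather than the naive $J_1\otimes J_2$ or the choices of \cite{DD11,Far17,Van99} -- is the ``correct'' one: with a different phase the two exponents would differ by a \emph{vector-dependent} sign and the commutator would fail. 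I expect the only genuine labour, and the main obstacle, to be precisely this Koszul sign bookkeeping; the remaining ingredients (density, the normalisation of $\Cl_{D_i}(A_i)$ by $\gamma_i$, and the degree-invariance of $\circ_i$) are routine. Note finally that the twist $\tau$ never enters, since \eqref{eq:2ndorder} involves $J$ only through $\xi\mapsto\xi^\circ$ and not through its commutation with $D$.
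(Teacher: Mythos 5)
Your proposal is correct: I verified the central claim — for homogeneous $a,b$ one finds $(a\odot b)^\circ(v\otimes w)=(-1)^{|a|\,|w|+|a|\delta}\,a^{\circ_1}v\otimes b^{\circ_2}w$, where $\delta\in\{0,1\}$ records whether $J_2$ commutes or anticommutes with $\gamma_2$; equivalently $(a\odot b)^\circ=\pm\,a^{\circ_1}\odot' b^{\circ_2}$ — and the two commutator exponents do reduce to the same expression $|a|\,|b'|+|a|\,|w|+|b'|\,|v|+|a|\delta$ modulo $2$, so the sign cancellation you rely on is genuine, and your preliminary reductions (density, homogeneity via $\gamma_i$-invariance of $\Cl_{D_i}(A_i)$, degree-invariance of $\xi\mapsto\xi^{\circ_i}$) are all sound. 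The mechanism is the same as the paper's — conjugation by the Koszul-signed $J$ converts the graded product $\odot$ into the opposite graded product $\odot'$, and such elements commute — but the organisation differs in a way worth noting. The paper economises: since $\xi\mapsto\xi^\circ$ is antimultiplicative and the commutant is a norm-closed algebra, it suffices to conjugate the four kinds of algebra generators $a_1\otimes 1$, $1\otimes a_2$, $\omega_1\otimes 1$, $\gamma_1\otimes\omega_2$ (this is \eqref{eq:leftright}), after which the only nontrivial check is the single cross-commutator $[\omega_1^\circ\otimes\gamma_2,\gamma_1\otimes\omega_2]=0$. You instead derive the conjugation formula for arbitrary homogeneous elements of $\Cl_{D_1}(A_1)\odot\Cl_{D_2}(A_2)$ and track all Koszul signs; in doing so you essentially re-prove the paper's Lemma \ref{lemma:25} together with the easy inclusion $B_1'\odot' B_2'\subseteq (B_1\odot B_2)'$ from Theorem \ref{thm:gct}, results the paper keeps separate and reuses for the Hodge condition (Prop.~\ref{prop:Hodgeproduct}). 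So your route costs more bookkeeping but buys more: one computation yields the ingredients for both Prop.~\ref{prop:22} and the Hodge-product statement. A small point in your favour: your sign $s$ correctly allows for $J_2$ anticommuting with $\gamma_2$, a sign that \eqref{eq:leftright} suppresses (harmlessly, since overall signs affect neither commutants nor generated algebras).
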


\begin{proof}
For $i=1,2$, let $a_i\in A_i$ and $\omega_i\in\Omega^1_{D_i}(A_i)$. The algebra $\Cl_D(A)$ is generated by $A_1\otimes 1$, $1\otimes A_2$ and elements of the form $\omega_1\otimes 1$ and $\gamma_1\otimes\omega_2$. Thus $\Cl_D(A)^\circ$ is generated by
\begin{equation}\label{eq:leftright}
\begin{aligned}
J(a_1^*\otimes 1)J^{-1} &= a_1^\circ\otimes 1, \\
J(\omega_1^*\otimes 1)J^{-1} &= \omega_1^\circ\otimes \gamma_2, \\
J(1\otimes a_2)J^{-1} &= 1\otimes a_2^\circ, \\
J(\gamma_1\otimes\omega_2)J^{-1} &= 1\otimes \omega_2^\circ.
\end{aligned}
\end{equation}
Note the presence of $\gamma_2$, while $\gamma_1$ has disappeared.
If the two factors satisfy \eqref{eq:2ndorder}, then the above four elements commute with $\Cl_D(A)$, and in particular,
$$
[\omega_1^\circ\otimes \gamma_2,
\gamma_1\otimes\omega_2]=0
$$
because $\omega_1^\circ$ anticommutes with $\gamma_1$ and $\gamma_2$ anticommutes with $\omega_2$. Thus the product spectral triple satisfies $\Cl_D(A)^\circ\subset\Cl_D(A)'$.
\end{proof}

We stress again that \eqref{eq:newJ} is \emph{not} the real structure in \cite{DD11,Far17,Van99}, and that Prop.~\ref{prop:22} does \emph{not} hold if the product real structure is defined like in \cite{DD11,Far17,Van99}.

Notice that in Prop.~\ref{prop:22} we do not claim that the product spectral triple is real. If one checks the conditions \eqref{eq:J} for $J$ one finds that there is a problem with (\ref{eq:Ja}) and (\ref{eq:Jb}). Since here we are mainly interested in the second-order and Hodge condition, we will not investigate how to modify \eqref{eq:newJ} so that also (\ref{eq:Ja}) and (\ref{eq:Jb}) are also satisfied. We will merely make the following observation.

\begin{prop}
If the spectral triples $(A_i,H_i,D_i,\gamma_i,J_i)$, $i=1,2$, satisfy \textup{(\ref{eq:Jc})} with sign $\varepsilon''_i$, then their product -- with the product real structure given by \eqref{eq:newJ} -- satisfies \textup{(\ref{eq:Jc})} with sign $\varepsilon''=\varepsilon_1''\varepsilon_2''$.

If in addition the factors satisfy \textup{(\ref{eq:Ja})} with sign $\varepsilon_i$ and $\varepsilon_1''=\varepsilon_2''=+1$, then their product will satisfy \textup{(\ref{eq:Ja})} with sign $\varepsilon=\varepsilon_1\varepsilon_2$.\footnote{Notice that this is the case of the Hodge-de Rham spectral triples. It happens when the KO-dimension is a multiple of $4\bmod8$.}
\end{prop}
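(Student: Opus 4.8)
The plan is to verify both claims by direct computation on homogeneous decomposable tensors $v\otimes w$, where $v\in H_1$ and $w\in H_2$ are eigenvectors of $\gamma_1$ and $\gamma_2$ of degrees $|v|$ and $|w|$. Since such tensors span a dense subspace and the operators $J$, $\gamma$ appearing in \textup{(\ref{eq:Ja})} and \textup{(\ref{eq:Jc})} are bounded, it suffices to check the claimed operator identities on these vectors.

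For the grading condition \textup{(\ref{eq:Jc})}, the key observation is that each $\gamma_i$ merely rescales a homogeneous vector without altering its degree, so inserting $\gamma_1$ and $\gamma_2$ leaves the Koszul sign in \eqref{eq:newJ} unchanged. Applying $J$ to $\gamma(v\otimes w)=\gamma_1 v\otimes\gamma_2 w$ and using the factor relations $J_i\gamma_i=\varepsilon_i''\gamma_i J_i$ gives
$$
J\gamma(v\otimes w)=(-1)^{|v|\hspace{1pt}|w|}\varepsilon_1''\varepsilon_2''\,\gamma_1 J_1 v\otimes\gamma_2 J_2 w ,
$$
while directly $\gamma J(v\otimes w)=(-1)^{|v|\hspace{1pt}|w|}\gamma_1 J_1 v\otimes\gamma_2 J_2 w$. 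Comparing, one reads off $J\gamma=\varepsilon_1''\varepsilon_2''\,\gamma J$, that is $\varepsilon''=\varepsilon_1''\varepsilon_2''$.

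For the second claim I would apply \eqref{eq:newJ} twice to compute $J^2(v\otimes w)$. The only delicate point---and the sole place the hypothesis $\varepsilon_1''=\varepsilon_2''=+1$ enters---is that the second application of $J$ carries a Koszul sign governed by the degrees of $J_1 v$ and $J_2 w$. Under $\varepsilon_i''=+1$ the operator $J_i$ commutes with $\gamma_i$ and hence preserves degree, so $J_1 v$ and $J_2 w$ again have degrees $|v|$ and $|w|$. The two applications of $J$ therefore contribute the same sign $(-1)^{|v|\hspace{1pt}|w|}$, and since $(-1)^{2|v|\hspace{1pt}|w|}=1$ they cancel, leaving
$$
J^2(v\otimes w)=J_1^2 v\otimes J_2^2 w=\varepsilon_1\varepsilon_2\,(v\otimes w) ,
$$
so that $\varepsilon=\varepsilon_1\varepsilon_2$, using that antilinearity of $J$ only conjugates the real sign factors.

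I expect the main (indeed essentially the only) obstacle to be the careful bookkeeping of these Koszul signs, together with the observation that the degree-shift behaviour of each $J_i$ is exactly controlled by $\varepsilon_i''$: the map $J_i$ preserves degree when $\varepsilon_i''=+1$ and flips it when $\varepsilon_i''=-1$. The two signs in $J^2$ cancel to a constant precisely when both factors preserve degree; were either $\varepsilon_i''=-1$, the product of the two signs would depend on $|v|$ and $|w|$ and $J^2$ would fail to be a scalar multiple of the identity, which is exactly why the hypothesis $\varepsilon_1''=\varepsilon_2''=+1$ is imposed in the second part.
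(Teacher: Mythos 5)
Your proposal is correct and follows essentially the same route as the paper: both parts are verified on homogeneous decomposable tensors, with the first part comparing $J\gamma$ and $\gamma J$ via the Koszul sign and the factor relations, and the second part using that $\varepsilon_i''=+1$ makes each $J_i$ degree-preserving so the two Koszul signs cancel, giving $J^2=J_1^2\otimes J_2^2$. Your closing remark on why the hypothesis $\varepsilon_1''=\varepsilon_2''=+1$ is needed is a nice addition beyond what the paper states explicitly.
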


\begin{proof}
On decomposable homogeneous tensors
\begin{align*}
J\gamma(v\otimes w) &=(-1)^{|v|\hspace{1pt}|w|+|v|+|w|}J_1(v)\otimes J_2(w) , \\
\gamma J(v\otimes w) &=\varepsilon''_1\varepsilon''_2(-1)^{|v|\hspace{1pt}|w|+|v|+|w|}J_1(v)\otimes J_2(w) ,
\end{align*}
which proves the first part of the statement.

If $\varepsilon''_1=\varepsilon''_2=+1$, $J_i$ does not change the degree of a vector, and one easily verifies that
$$
J^2(v\otimes w)=(-1)^{2|v|\hspace{1pt}|w|}J_1^2(v)\otimes J_2^2(w)
$$
on decomposable homogeneous tensors. Hence $J^2=J_1^2\otimes J_2^2$ and we get the second part of the theorem.
\end{proof}

The problem with condition (\ref{eq:Jb}) is not surprising, since in the example of a closed oriented Riemaniann manifold we are forced to introduce a twist to make it work. Note that we can introduce another graded product $\odot'$ via the rule
$$
(a\odot' b)(v\otimes w)=(-1)^{|a|\,|w|}av\otimes bw
$$
for all homogeneous $v\in H_1$ and $w\in H_2$ and all homogeneous operators $a,b$ on $H_1,H_2$. With this convention, $a\odot' b=a\otimes b\gamma_2$ for all $a$ of degree $1$. This is the natural convention for right modules, \emph{i.e.},~if we imagine that endomorphisms act from the right on vectors. This graded product gives an alternative Dirac operator on $H_1\otimes H_2$:
\begin{equation}\label{eq:secondDirac}
\widetilde{D}\coloneqq D_1\odot' 1+1\odot' D_2=
D_1\otimes\gamma_2+1\otimes D_2 .
\end{equation}
It turns out that the modified real structure $J$ transforms the ``left'' into the ``right'' Dirac operator.

\begin{prop}
If the spectral triples $(A_i,H_i,D_i,\gamma_i,J_i)$, $i=1,2$, satisfy \eqref{eq:J} with signs $(\varepsilon_i,\varepsilon'_i,\varepsilon''_i)$ and
$$
\varepsilon'_1\varepsilon''_1=\varepsilon'_2
$$
then their product -- with $J$ given by \eqref{eq:newJ} -- satisfies
$$
JD=\varepsilon'\widetilde{D}J
$$
with $\widetilde{D}$ as in \eqref{eq:secondDirac} and $\varepsilon'=\varepsilon'_1$.
\end{prop}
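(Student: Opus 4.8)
The plan is to verify the operator identity $JD=\varepsilon'\widetilde{D}J$ on the dense domain spanned by homogeneous decomposable tensors $v\otimes w$, with $v\in H_1$ and $w\in H_2$ eigenvectors of $\gamma_1$ and $\gamma_2$ respectively, and then extend to all of $\operatorname{Dom}(D)$ by closure, exactly in the spirit in which the product Dirac operators $D$ and $\widetilde{D}$ were themselves defined as closures. Since these tensors form a core, it suffices to match the two sides there. Writing $D=D_1\otimes 1+\gamma_1\otimes D_2$ and $\widetilde{D}=D_1\otimes\gamma_2+1\otimes D_2$ (\emph{cf.} \eqref{eq:secondDirac}), I would push $J$ through the two summands of $D$ separately, using \eqref{eq:newJ} together with the axioms \eqref{eq:Jb} and \eqref{eq:Jc} for each factor.

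The essential structural point, already visible in the conjugation table \eqref{eq:leftright}, is that $D_1$ is \emph{odd}, so $D_1v$ has degree $|v|+1$; hence the Koszul sign $(-1)^{|D_1 v|\,|w|}$ in \eqref{eq:newJ} differs from $(-1)^{|v|\,|w|}$ by a factor $(-1)^{|w|}$, which is precisely the eigenvalue of $\gamma_2$ on $w$. Combined with \eqref{eq:Jb} for the first factor (sign $\varepsilon_1'$) and with the way $\gamma_2$ interacts with $J_2$ through \eqref{eq:Jc}, this produces an operator identity of the form $J(D_1\otimes 1)\propto (D_1\otimes\gamma_2)J$, with a controlled grading sign — this is exactly the mechanism by which the second grading materialises and $\widetilde{D}$ appears. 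For the second summand the factor $\gamma_1 v=(-1)^{|v|}v$ cancels the degree-shift coming from $D_2$ raising the degree of $w$, and \eqref{eq:Jb} for the second factor gives cleanly $J(\gamma_1\otimes D_2)=\varepsilon_2'\,(1\otimes D_2)J$. Adding the two and comparing with $\varepsilon'\widetilde{D}J$ then reduces the whole statement to matching the coefficients of the two independent terms $D_1\otimes\gamma_2$ and $1\otimes D_2$.

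The main obstacle is precisely this sign bookkeeping: one must track correctly how the Koszul sign in \eqref{eq:newJ} changes when $J$ is commuted past the degree-raising operators $D_1$ and $D_2$, and how the grading operator that thereby emerges interacts with $J$ according to \eqref{eq:Jc}. Matching the coefficients of the two independent terms yields two equations relating $\varepsilon'$ to $\varepsilon_1'$, $\varepsilon_2'$ and the grading sign; the hypothesis $\varepsilon'_1\varepsilon''_1=\varepsilon'_2$ is exactly the compatibility condition that allows both equations to hold simultaneously, with the common value $\varepsilon'=\varepsilon'_1$. Once the two summands are reconciled, the identity holds on decomposable homogeneous tensors, and the routine closure argument upgrades it to the full operator identity $JD=\varepsilon'\widetilde{D}J$. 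A convenient consistency check, mirroring \eqref{eq:leftright}, is that conjugation by $J$ sends the odd operator $D_1\otimes 1$ to something proportional to $D_1\otimes\gamma_2$, thus exhibiting $\widetilde{D}$ directly and confirming that no spurious $\gamma_1$ survives.
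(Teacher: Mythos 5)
Your overall strategy (verify the identity on homogeneous decomposable tensors, split $D$ into its two summands, and push $J$ through each) is the natural one, and your two structural identities are correct consequences of the literal reading of \eqref{eq:newJ}: one finds
\begin{equation*}
J(D_1\otimes 1)=\varepsilon'_1\varepsilon''_2\,(D_1\otimes\gamma_2)\,J,
\qquad
J(\gamma_1\otimes D_2)=\varepsilon'_2\,(1\otimes D_2)\,J .
\end{equation*}
But the final step, where you claim the hypothesis $\varepsilon'_1\varepsilon''_1=\varepsilon'_2$ is ``exactly the compatibility condition'' with common value $\varepsilon'=\varepsilon'_1$, is asserted rather than computed, and it does not follow from your own identities. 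Matching coefficients in the display above forces $\varepsilon'=\varepsilon'_1\varepsilon''_2$ and $\varepsilon'=\varepsilon'_2$, i.e.\ compatibility $\varepsilon'_1\varepsilon''_2=\varepsilon'_2$ with common value $\varepsilon'=\varepsilon'_2$. Note that $\varepsilon''_1$ never appears anywhere in your computation --- you eliminate $\gamma_1$ by evaluating it on $v$ \emph{before} applying $J$, so \eqref{eq:Jc} for the first factor is never invoked --- whereas $\varepsilon''_2$, which does appear, is absent from the statement. Concretely, take $\varepsilon''_1=-1$, $\varepsilon''_2=+1$, $\varepsilon'_2=-\varepsilon'_1$ (allowed by Table \ref{tab}, e.g.\ KO-dimensions $6$ and $0$): the hypothesis holds, yet your identities give $JD=\varepsilon'_1\bigl(D_1\otimes\gamma_2-1\otimes D_2\bigr)J$, which is not $\pm\widetilde{D}J$.

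The missing idea is the parity shift of $J_1$ and $J_2$ themselves: when $\varepsilon''_i=-1$ the operator $J_i$ reverses the $\gamma_i$-degree, so it matters whether the Koszul sign is attached to $(|v|,|w|)$ or to $(|J_1v|,|J_2w|)$, and the two resulting operators $J$ differ exactly when some $\varepsilon''_i=-1$. The proposition as stated is the one produced by the second convention. To see this, write $J=\Sigma\circ(J_1\otimes J_2)$, where $\Sigma$ is the linear operator acting by $(-1)^{|v|\,|w|}$ on homogeneous tensors, and commute $J_1\otimes J_2$ through $D$ \emph{first}: using $J_iD_i=\varepsilon'_iD_iJ_i$ and, crucially, $J_1\gamma_1=\varepsilon''_1\gamma_1J_1$ (this is where $\varepsilon''_1$ enters), one gets $(J_1\otimes J_2)D=\bigl(\varepsilon'_1\,D_1\otimes 1+\varepsilon''_1\varepsilon'_2\,\gamma_1\otimes D_2\bigr)(J_1\otimes J_2)$; then the purely combinatorial identities $\Sigma(D_1\otimes 1)=(D_1\otimes\gamma_2)\Sigma$ and $\Sigma(\gamma_1\otimes D_2)=(1\otimes D_2)\Sigma$ (valid because $D_1,D_2$ are odd; no $\varepsilon''$ enters) yield $JD=\bigl(\varepsilon'_1\,D_1\otimes\gamma_2+\varepsilon''_1\varepsilon'_2\,1\otimes D_2\bigr)J$. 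The two coefficients agree precisely when $\varepsilon'_1=\varepsilon''_1\varepsilon'_2$, i.e.\ $\varepsilon'_1\varepsilon''_1=\varepsilon'_2$, with $\varepsilon'=\varepsilon'_1$, which is the statement. If one instead insists on your input-degree reading of \eqref{eq:newJ}, the correct conclusion would be: hypothesis $\varepsilon'_2\varepsilon''_2=\varepsilon'_1$ and sign $\varepsilon'=\varepsilon'_2$. Either way, the proof as you wrote it does not establish the stated result; the gap is precisely the unperformed sign bookkeeping that you yourself identified as ``the main obstacle''.
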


\begin{proof}
A straightforward computation.
\end{proof}

The following observation will be useful later on, and holds regardless of the signs in \eqref{eq:J}.
If $B_1\subset\mathcal{B}(H_1)$ and $B_2\subset\mathcal{B}(H_2)$ are $C^*$-subalgebras, denote by
$B_1\odot' B_2$ the norm closure in $\mathcal{B}(H)$ of the vector subspace spanned by elements $a\odot' b$, with $a\in\mathcal{B}(H_1)$ and $b\in\mathcal{B}(H_2)$.

\begin{lemma}\label{lemma:25}
In a product of unital real even spectral triples, and with $J$ given by \eqref{eq:newJ}, one has
$J\Cl_D(A)J^{-1}=\Cl_{D_1}(A_1)\odot'\Cl_{D_2}(A_2)$.
\end{lemma}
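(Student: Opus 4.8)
The plan is to exploit that conjugation by $J$, i.e.\ the map $\mathrm{Ad}_J\colon\xi\mapsto J\xi J^{-1}$, is a norm-isometric, multiplicative, antilinear bijection of $\mathcal{B}(H)$. This holds because $J$ in \eqref{eq:newJ} is an invertible antilinear isometry: $J_1,J_2$ are invertible antilinear isometries and the Koszul sign operator is an involution. Consequently $\mathrm{Ad}_J$ carries $\Cl_D(A)$ onto a $C^*$-subalgebra of $\mathcal{B}(H)$ and commutes with the operation of taking closed complex-linear spans (the complex conjugation of scalars is harmless since $\overline\lambda$ ranges over all of $\C$). First I would invoke Lemma~\ref{lemma:21b}, which exhibits $\Cl_D(A)=\Cl_{D_1}(A_1)\odot\Cl_{D_2}(A_2)$ as the norm closure of the span of the elements $x\odot y$ with $x\in\Cl_{D_1}(A_1)$, $y\in\Cl_{D_2}(A_2)$. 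By the previous remark it then suffices to compute $J(x\odot y)J^{-1}$ for homogeneous $x,y$ and to recognise the result as an element of $\Cl_{D_1}(A_1)\odot'\Cl_{D_2}(A_2)$.

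The heart of the argument is the single identity
\[
J\,(x\odot y)\,J^{-1}=\pm\,(J_1 x J_1^{-1})\odot'(J_2 y J_2^{-1}),
\]
valid for homogeneous $x\in\mathcal{B}(H_1)$, $y\in\mathcal{B}(H_2)$, the sign being a scalar $\pm1$ depending only on $|x|,|y|$ (and on $\varepsilon''_1,\varepsilon''_2$) and hence irrelevant for spans. I would prove it by evaluating both sides on a homogeneous simple tensor $v\otimes w$: unwinding \eqref{eq:newJ} contributes the Koszul factor $(-1)^{|v||w|}$ twice, the definition of $\odot$ contributes $(-1)^{|y||v|}$, and after using $J_i(\zeta\xi)=(J_i\zeta J_i^{-1})(J_i\xi)$ together with the fact that conjugation by $J_i$ preserves the degree of a homogeneous operator (a consequence of \eqref{eq:Jc}, valid for either sign $\varepsilon''_i$), the surviving \emph{vector}-dependent sign is precisely the one that dictates the right-handed graded product $\odot'$ rather than $\odot$. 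Equivalently, and more economically, the four generators of $\mathrm{Ad}_J\Cl_D(A)$ can be read straight off \eqref{eq:leftright} in the proof of Prop.~\ref{prop:22}; the only nonroutine point is that the odd generator $\omega_1^\circ\otimes\gamma_2$ equals $\omega_1^\circ\odot'1$, since an odd operator $a$ satisfies $a\odot'1=a\otimes\gamma_2$, whereas the even generators are unaffected by the passage from $\otimes$ to $\odot'$.

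Assembling these, $\mathrm{Ad}_J$ sends the closed span of the $x\odot y$ onto the closed span of the $(J_1 x J_1^{-1})\odot'(J_2 y J_2^{-1})$, which by the definition of $\odot'$ is exactly $\Cl_{D_1}(A_1)\odot'\Cl_{D_2}(A_2)$, yielding the claim. I expect the main obstacle to be purely the sign bookkeeping of the graded tensor product: one must check that the two Koszul signs carried by $J$ conspire with the sign in the definition of $\odot$ to produce $\odot'$ and \emph{not} $\odot$, and in particular that the grading $\gamma_2$ appearing when one conjugates the odd $1$-forms of the first factor is absorbed correctly into the right-handed product (this is the mechanism already visible in the disappearance of $\gamma_1$ and appearance of $\gamma_2$ in \eqref{eq:leftright}). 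Once the homogeneous computation is pinned down, the reduction to generators via Lemma~\ref{lemma:21b} and the continuity of $\mathrm{Ad}_J$ make the remainder routine.
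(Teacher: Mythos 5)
Your proposal is correct and follows essentially the same route as the paper: the paper's one-line proof simply reads the four conjugated generators off \eqref{eq:leftright}, which is exactly the ``more economical'' variant you describe, and your general identity $J(x\odot y)J^{-1}=\pm\,(J_1xJ_1^{-1})\odot'(J_2yJ_2^{-1})$ is that same computation carried out for arbitrary homogeneous operators and then extended to closed spans via Lemma~\ref{lemma:21b} and the multiplicativity of $\mathrm{Ad}_J$. The only caveat (inherited from the paper, so not counted against you) is that what both arguments actually produce is $(J_1\Cl_{D_1}(A_1)J_1^{-1})\odot'(J_2\Cl_{D_2}(A_2)J_2^{-1})=\Cl_{D_1}(A_1)^\circ\odot'\Cl_{D_2}(A_2)^\circ$, and identifying this with $\Cl_{D_1}(A_1)\odot'\Cl_{D_2}(A_2)$ as literally written is an abuse of notation; it is the $\circ$-version that feeds correctly into the Hodge condition in Prop.~\ref{prop:Hodgeproduct}.
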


\begin{proof}
We see from \eqref{eq:leftright} that conjugation by $J$ sends generators of $\Cl_D(A)$ into generators of $\Cl_{D_1}(A_1)\odot'\Cl_{D_2}(A_2)$.
\end{proof}

\subsection{Products and the Hodge condition}\label{sec:prodHodge}
The behaviour of the Hodge condition under products is more technical and to simplify the discussion we will study it only in the finite-dimensional case.
We want to prove the following proposition.

\begin{prop}\label{prop:Hodgeproduct}
Let $(A_i,H_i,D_i,\gamma_i,J_i)$ be two unital finite-dimensional even real spectral triples satisfying the Hodge condition \eqref{eq:Hodge}. Define $(A,H,D)$ as in \eqref{eq:product} and $J$ as in \eqref{eq:newJ}. Then $J\Cl_D(A)J^{-1}=\Cl_D(A)'$.
\end{prop}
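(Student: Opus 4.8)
The plan is to turn the statement into the finite-dimensional Hodge identity and to prove it by first describing both sides concretely and then matching dimensions. Throughout put $B_i\coloneqq\Cl_{D_i}(A_i)$, and let $\circ$, $\circ_i$ denote conjugation by $J$ and by $J_i$. Since $\Cl_D(A)$ is a $*$-algebra, $J\Cl_D(A)J^{-1}=\Cl_D(A)^\circ$, so the claim is exactly the Hodge condition \eqref{eq:Hodge} for the product. By Lemma~\ref{lemma:21b} one side is $\Cl_D(A)=B_1\odot B_2$, hence $\Cl_D(A)'=(B_1\odot B_2)'$. For the other side I would read off \eqref{eq:leftright}: conjugation by $J$ sends the generators of $\Cl_D(A)$ to $a_1^\circ\odot'1,\ \omega_1^\circ\odot'1,\ 1\odot'a_2^\circ,\ 1\odot'\omega_2^\circ$, so that $\Cl_D(A)^\circ=B_1^{\circ_1}\odot'B_2^{\circ_2}$; invoking the finite-dimensional form of \eqref{eq:Hodge} on each factor (which by the finite-dimensional characterisation of $\sim_{J_i}$ reads $B_i^{\circ_i}=B_i'$) this becomes $B_1'\odot'B_2'$. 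The target is therefore the graded commutation identity $(B_1\odot B_2)'=B_1'\odot'B_2'$.

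The inclusion $B_1'\odot'B_2'\subseteq(B_1\odot B_2)'$ is immediate: it is the second-order condition for the product, which holds by Prop.~\ref{prop:22} because each factor satisfies \eqref{eq:2ndorder} (as \eqref{eq:Hodge} implies \eqref{eq:2ndorder}); equivalently it is the short Koszul-sign check that $a\odot b$ commutes with $x\odot'y$ for $x\in B_1'$, $y\in B_2'$. The whole difficulty lies in the reverse inclusion, and in finite dimensions I would obtain it by a dimension count: it suffices to show $\dim(B_1\odot B_2)'=\dim H$, because $\dim(B_1'\odot'B_2')=\dim H$ as well (computed below), so that the single inclusion together with equality of dimensions forces $B_1'\odot'B_2'=(B_1\odot B_2)'$.

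To compute these dimensions I would use the rigidity forced by \eqref{eq:Hodge}. In finite dimensions $H_i$ being a self-Morita equivalence $B_i$-bimodule forces $H_i\cong B_i$ as a $B_i$-bimodule, so $\dim B_i=\dim B_i'=\dim H_i$ and hence $\dim(B_1\odot B_2)=\dim(B_1'\odot'B_2')=\dim H$ (using the invertibility of $\gamma_1$ to see that $\{a_k\odot b_l\}$ stays linearly independent). The key structural step is then to identify the representation of $B_1\odot B_2$ on $H\cong B_1\otimes B_2$ with the \emph{left regular representation} of the graded algebra $B_1\odot B_2$: under $H_i\cong B_i$ the grading $\gamma_1$ acts as the grading automorphism $\xi\mapsto(-1)^{|\xi|}\xi$ (the cyclic vector $1_{B_1}$ being even), and since $a\odot b=a\gamma_1^{|b|}\otimes b$ one checks that $a\odot b$ acts by $\xi\otimes\eta\mapsto(-1)^{|b|\,|\xi|}a\xi\otimes b\eta$, i.e.\ by graded left multiplication. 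The commutant of a regular representation is the algebra of right multiplications, of dimension $\dim(B_1\odot B_2)=\dim H$, which is exactly the bound needed to close the argument. Conceptually this is the statement that a graded tensor product of self-Morita equivalence bimodules is again one.

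I expect the main obstacle to be precisely this last identification: verifying, with the correct Koszul signs and with $\gamma_1$ acting as the $\Z$-grading, that the two finite-dimensional standard forms $H_i\cong B_i$ assemble into the regular representation of $B_1\odot B_2$ on $H$ (equivalently, proving the graded commutation theorem $(B_1\odot B_2)'=B_1'\odot'B_2'$). Pinning down how $\gamma_1$ sits inside $\mathcal{B}(H_1)$ under the identification $H_1\cong B_1$, and checking it is genuinely the grading rather than a central twist of it, is the delicate point. Once this is in place, the reduction, the easy inclusion, and the dimension bookkeeping are all routine.
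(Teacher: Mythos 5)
Your reduction is exactly the paper's own route: via Lemma~\ref{lemma:21b}, the computation \eqref{eq:leftright} (Lemma~\ref{lemma:25}), and the finite-dimensional characterisation of \eqref{eq:Hodge} as $J_iB_iJ_i^{-1}=B_i'$, the claim becomes the graded commutation identity $(B_1\odot B_2)'=B_1'\odot'B_2'$, and your Koszul-sign check of the inclusion $B_1'\odot'B_2'\subseteq(B_1\odot B_2)'$ also matches the paper. The gap is in your proof of the reverse inclusion. The structural claim it rests on --- that in finite dimensions \eqref{eq:Hodge} forces $H_i\cong B_i$ as a $B_i$-bimodule, hence $\dim H_i=\dim B_i$ and the identification of $H$ with the graded regular representation of $B_1\odot B_2$ --- is false. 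What the Hodge condition gives is an antilinear isomorphism $b\mapsto J_ibJ_i^{-1}$ of $B_i$ onto $B_i'$, i.e.\ an anti-isomorphism of algebras. In Wedderburn terms, if $B_i\cong\bigoplus_kM_{n_k}(\C)$ acts with multiplicities $m_k$, this forces the multiset $\{m_k\}$ to be a permutation of $\{n_k\}$ compatible with block sizes, but \emph{not} the blockwise equality $m_k=n_k$ that characterises the regular representation; so it yields $\dim B_i'=\dim B_i$ but not $\dim B_i=\dim H_i$.

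Here is a concrete counterexample satisfying all hypotheses of the proposition. Let $B=\C\oplus M_2(\C)$ act on $H=\C^2\oplus\C^2$ with the summand $\C$ acting by scalars on the first $\C^2$ (multiplicity $2$) and $M_2(\C)$ acting naturally on the second (multiplicity $1$), and set $J(v_1\oplus v_2)\coloneqq\overline{v}_2\oplus\overline{v}_1$. Then $J$ is an antilinear isometry with $J^2=1$, and for $b=(\lambda,m)\in B$, acting as $\lambda\oplus m$, one computes $JbJ^{-1}=\overline{m}\oplus\overline{\lambda}$; hence $JBJ^{-1}=M_2(\C)\oplus\C=B'$, so $B=(B^\circ)'$ and \eqref{eq:Hodge} holds. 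Taking $A=B$, $D=0$, $\gamma=1$ gives a unital finite-dimensional even real spectral triple with $\Cl_D(A)=B$, yet $\dim H=4<5=\dim B$. For the product of two copies of this triple, $\dim(B\odot B)'=\dim(B'\odot'B')=25\neq16=\dim H$, so the dimension count on which your hard inclusion depends is simply wrong, and $H$ is not the regular representation of $B_1\odot B_2$. (The proposition itself is of course untouched: the paper proves Thm.~\ref{thm:gct} for \emph{arbitrary} unital subalgebras $B_i\subset\operatorname{End}_\C(H_i)$, with no Morita or dimension input, by writing $\omega\in(B_1\odot B_2)'$ as $\sum_i(a_i\odot'b_i+c_i\odot'd_i)$ with linearly independent factors and commuting successively with $x\otimes1$, $x\in B_1$, and with $1\odot y$, $y\in B_2$ even and odd.) To repair your argument you would have to replace the dimension count by such a direct computation --- note that even in the special cases where $H_i\cong B_i$ does hold, identifying the commutant of graded left multiplication with graded right multiplication requires essentially the same work as the paper's theorem.
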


\noindent
That is, that the product spectral triple satisfies the Hodge condition as well. In fact, in light of Lemmas \ref{lemma:21b}
and \ref{lemma:25}, Prop.~\ref{prop:Hodgeproduct} is a corollary of the following theorem.

\begin{thm}\label{thm:gct}
Let $B_i\subset\operatorname{End}_{\C}(H_i)$ be two unital subalgebras, $i=1,2$. Then
$$
(B_1\odot B_2)'=B_1'\odot'B_2' .
$$
\end{thm}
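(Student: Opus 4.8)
The plan is to prove the two inclusions separately, regarding $(B_1 \odot B_2)' = B_1' \odot' B_2'$ as a graded refinement of the ordinary commutation theorem $(B_1 \otimes B_2)' = B_1' \otimes B_2'$. Throughout I use that $\gamma_i$ normalises $B_i$ (as it does for $B_i = \Cl_{D_i}(A_i)$, conjugation by $\gamma_i$ being the grading automorphism), so that $B_i$ and hence $B_i'$ inherit a $\Z_2$-grading and admit homogeneous bases; finite-dimensionality enters through slice maps. The two structural facts I would record first are $a \odot b = (a \odot 1)(1 \odot b)$, with $a \odot 1 = a \otimes 1$ and $1 \odot b = \gamma_1^{|b|} \otimes b$ for homogeneous $b$, so that $B_1 \odot B_2$ is generated by $\{a \otimes 1 : a \in B_1\}$ together with $\{\gamma_1^{|b|} \otimes b : b \in B_2 \text{ homogeneous}\}$.

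For the inclusion $B_1' \odot' B_2' \subseteq (B_1 \odot B_2)'$ I would simply verify, on a homogeneous vector $v \otimes w$ and for homogeneous $a \in B_1$, $b \in B_2$, $c \in B_1'$, $d \in B_2'$, that $(a \odot b)(c \odot' d)$ and $(c \odot' d)(a \odot b)$ agree. Expanding both with the Koszul rule, the operator parts coincide because $ac = ca$ and $bd = db$ (ordinary commutants), and a direct check shows both accumulated signs equal $(-1)^{|b||v| + |b||c| + |c||w|}$. Hence the two products are equal.

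The substantive direction is $(B_1 \odot B_2)' \subseteq B_1' \odot' B_2'$, which I would establish by a graded slice-map argument. Let $X \in (B_1 \odot B_2)'$. Commuting with the generators $a \otimes 1$ and expanding $X$ over a basis of $\operatorname{End}(H_2)$ in the second leg forces $X \in B_1' \otimes \operatorname{End}(H_2)$ by the usual slice argument. I then rewrite $X = \sum_k z_k \otimes g_k$ over a fixed homogeneous basis $\{z_k\}$ of $B_1'$, with $g_k \in \operatorname{End}(H_2)$ uniquely determined. Imposing commutation with the remaining generators $\gamma_1^{|b|} \otimes b$ and moving $\gamma_1^{|b|}$ past the homogeneous $z_k$ produces a Koszul sign $(-1)^{|z_k||b|}$; after cancelling the invertible factor $\gamma_1^{|b|} \otimes 1$ and using the independence of the $z_k$, one is left with
$$
g_k\, b = (-1)^{|z_k|\,|b|}\, b\, g_k \qquad \text{for all homogeneous } b \in B_2 .
$$
Thus each $g_k$ graded-commutes with $B_2$ to the degree of $z_k$. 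For even $z_k$ this says $g_k \in B_2'$, while for odd $z_k$ one checks that $g_k \gamma_2 \in B_2'$ (since $(g_k\gamma_2)b = (-1)^{|b|} g_k b \gamma_2 = b\, g_k \gamma_2$), so that $g_k = (g_k\gamma_2)\gamma_2$. In either case $z_k \otimes g_k = z_k \odot' d_k$ with $d_k = g_k$ or $d_k = g_k\gamma_2$ in $B_2'$, whence $X \in B_1' \odot' B_2'$.

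The main obstacle is conceptual rather than computational: one must see that passing to the \emph{ordinary} commutant reverses the handedness of the Koszul sign, converting the left-graded product $\odot$ into the right-graded product $\odot'$. Concretely, this is the step where ordinary commutation of the slice $g_k$ with the graded generators becomes graded commutation, and where the grading $\gamma_2$ must be inserted for the odd slices — precisely the $\gamma_2$-twist built into the definition of $\odot'$. The remaining care is the sign bookkeeping together with the reliance on finite-dimensionality (for the slice decomposition) and on $\gamma_i$ normalising $B_i$ (so that homogeneous bases of $B_i'$ exist).
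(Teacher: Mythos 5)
Your proof is correct, and its skeleton is the same as the paper's: the inclusion $B_1'\odot'B_2'\subseteq(B_1\odot B_2)'$ by the identical Koszul-sign computation, and the reverse inclusion by a two-stage slicing argument in finite dimensions---first against the generators $a\otimes 1$, forcing the first legs into $B_1'$, then against $1\odot b=\gamma_1^{|b|}\otimes b$, forcing the second legs into $B_2'$, with a $\gamma_2$ inserted on the odd slices, which is exactly the passage from $\otimes$ to $\odot'$. The differences are in the bookkeeping, and they cut in your favour.

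First, your standing assumption that $\gamma_i$ normalises $B_i$ (so that $B_i$, hence also $B_i'$, is graded and spanned by homogeneous elements) does not appear in the statement of Theorem~\ref{thm:gct}, but it cannot be dropped: take $H_1=H_2=\C^2$, $\gamma_1=\gamma_2=\operatorname{diag}(1,-1)$, $B_2=\C 1$ and $B_1=\C 1+\C q$ with $q=\left(\begin{smallmatrix}0&1\\0&1\end{smallmatrix}\right)$. Then $B_1'=B_1$, so $(B_1\odot B_2)'=(B_1\otimes\C 1)'=B_1\otimes M_2(\C)$ contains $q\otimes 1$; on the other hand, writing $q=p+n$ with $p=\operatorname{diag}(0,1)$ even and $n=\left(\begin{smallmatrix}0&1\\0&0\end{smallmatrix}\right)$ odd, every element of $B_1'\odot'B_2'$ is of the form $1\otimes\beta+p\otimes c+n\otimes c\gamma_2$, and matching this against $q\otimes 1=p\otimes 1+n\otimes 1$ forces $c=1$ and $c\gamma_2=1$ simultaneously, a contradiction. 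So gradedness is a genuine hypothesis; it holds in the intended application $B_i=\Cl_{D_i}(A_i)$ (the generators $A_i$ and $\Omega^1_{D_i}(A_i)$ are even and odd respectively), and the paper's own proof uses it tacitly when its second stage tests commutation only against \emph{homogeneous} $y\in B_2$---making it explicit, as you do, is the right call. Second, your first stage (expansion of $X$ over a plain basis of $\operatorname{End}_{\C}(H_2)$, no gradings involved) avoids a weak point in the paper's first stage, which decomposes $\omega=\sum_i(a_i\otimes b_i+c_i\otimes d_i\gamma_2)$ and asserts that ``the elements $\{b_i,d_j\}$ can be chosen to be linearly independent'': what the ensuing deduction actually needs is joint independence of $\{b_i\}\cup\{d_j\gamma_2\}$, which cannot always be arranged (for $\omega=q\otimes 1$ above, both spans are forced to contain $1$), and the step becomes valid only after restricting to homogeneous $x\in B_1$ and separating parities---which is precisely what your graded-commutation formulation does from the outset. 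In short: same strategy, but your version closes gaps that the paper's write-up leaves open.
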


\begin{proof}
For all homogeneous elements $a\in B_1$, $b\in B_2$, $c\in B_1'$, $d\in B_2'$, $v\in H_1$, and $w\in H_2$ one has:
\begin{align*}
(a\odot b)(c \odot' d)(v\otimes w)&=(-1)^{|b||v|+|c||w|+|b||c|}acv\otimes bdw \\
&=(c\odot' d)(a \odot b)(v\otimes w) .
\end{align*}
This proves the inclusion $B_1'\odot' B_2'\subseteq (B_1\odot B_2)'$. For the opposite inclusion, since all spaces are finite-dimensional and $\gamma_2$ is invertible,
every element $\omega\in\operatorname{End}_{\C}(H)$ can be written as a finite sum
\begin{align*}
\omega &=\sum\nolimits_i (a_i\otimes b_i+c_i\otimes d_i\gamma_2) \\
&=\sum\nolimits_i (a_i\odot' b_i+c_i\odot' d_i)
\end{align*}
for some $a_i,c_i\in\operatorname{End}_{\C}(H_1)$,
$b_i,d_i\in\operatorname{End}_{\C}(H_2)$, $a_i$ even and $c_i$ odd. The elements $\{b_i,d_j\}$ can be chosen to be linearly independent.
Assume $\omega\in (B_1\odot B_2)'$. Since $B_2$ is unital, for all $x\in B_1$ one has $x\otimes 1=x\odot 1\in B_1\odot B_2$ and
$$
[\omega,x\otimes 1]=\sum\nolimits_i ([a_i,x]\otimes b_i+[c_i,x]\otimes d_i\gamma_2)
$$
must be zero. From the linear independence of the elements in the second factor, we deduce $[a_i,x]=[c_i,x]=0$, so that $a_i,c_i\in B_1'$ and $\omega\in B_1'\odot'\operatorname{End}_{\C}(H_2)$. It follows that $\omega$ can be written as a finite sum 
$\omega=\sum\nolimits_i (\widetilde{a}_i\odot' \widetilde{b}_i+\widetilde{c}_i\odot' \widetilde{d}_i)$ where 
$\widetilde{a}_i,\widetilde{c}_i\in B_1'$ and $\widetilde{b}_i,\widetilde{d}_i\in \operatorname{End}_{\C}(H_2)$, $\widetilde{a}_i$ even, $\widetilde{c}_i$ odd and now
$\{\widetilde{a}_i,\widetilde{c}_i\}$ are chosen to be linearly independent.
Since $B_1$ is unital, for all even $y\in B_2$ one has $1\odot y=1\otimes y\in B_1\odot B_2$ and for all even $y\in B_2$ one has $1\odot y=\gamma_1\otimes y\in B_1\odot B_2$. If $y$ is even,
$$
[\omega,1\odot y]=\sum\nolimits_i (\widetilde{a}_i\otimes[\widetilde{b}_i,y]+\widetilde{c}_i\otimes [\widetilde{d}_i,y]\gamma_2),
$$
while if $y$ is odd,
$$
[\omega,1\odot y]=\sum\nolimits_i (\widetilde{a}_i\gamma_1\otimes [\widetilde{b}_i,y]-\widetilde{c}_i\gamma_1\otimes[\widetilde{d}_i,y]\gamma_2) .
$$
From the linear independence of the elements in the first factor we deduce that in both cases $[\widetilde{b}_i,y]=[\widetilde{d}_i,y]=0$, so that $\widetilde{b}_i,\widetilde{d}_i\in B_2'$ and $\omega\in B_1'\odot' B_2'$.
\end{proof}

\section*{Acknowledgements}
L.D.~and F.D'A.~thank the Fields Institute, Toronto, for hospitality. A.M.~thanks A.Rubin for helpful discussions.
This work was partially supported by the grant H2020-MSCA-RISE-2015-691246-QUANTUM DYNAMICS.

\end{document}